\begin{document}

\newtheorem{theorem}{Theorem}[section]
\newtheorem{lemma}[theorem]{Lemma}
\newtheorem{conjecture}[theorem]{Conjecture}
\newtheorem{proposition}[theorem]{Proposition}
\newtheorem{corollary}[theorem]{Corollary}
\newtheorem{claim}[theorem]{Claim}
\newtheorem{example}[theorem]{Example}
\theoremstyle{definition}
\newtheorem{remark}[theorem]{Remark}
\newtheorem{definition}[theorem]{Definition}

\def\F{{\mathbb F}}
\def\R{{\mathbb R}}
\def\Z{{\mathbb Z}}
\def\C{{\mathbb C}}
\def\N{{\mathbb N}}
\def\O{{\mathbb O}}
\def\B{{\mathcal B}}
\def\Hy{{\mathcal H}}
\def\S{{\mathcal S}}

\newcommand{\rst}[1]{\ensuremath{{\mathbin\upharpoonright}\raise-.5ex\hbox{$#1$}}}


\title{Almost orthogonal subsets of vector spaces over finite fields}

 \author[A. Mohammadi] {Ali Mohammadi}

\address{A.M.: School of Mathematics, Institute for Research in Fundamental Sciences (IPM),
Tehran, Iran}
\email{a.mohammadi@ipm.ir}

 \author[G.\ Petridis]{Giorgis Petridis}
\address{G.P.: Department of Mathematics,
University of Georgia,  Athens, GA, 30602 USA}
\email{giorgis@cantab.net}

\pagenumbering{arabic}

\begin{abstract}
We prove various results on the size and structure of subsets of vector spaces over finite fields which, in some sense, have too many mutually orthogonal pairs of vectors. In particular, we obtain sharp finite field variants of a theorem of Rosenfeld and an almost version of a theorem of Berlekamp.
\end{abstract}

\maketitle

\section{Introduction}
\subsection{Background}
An \emph{orthogonal set} of vectors in $\R^n$, that is a set of non-zero vectors with the property that every pair of distinct vectors is mutually orthogonal, is linearly independent and therefore contains at most $n$ elements. Erd\H{o}s asked the question of determining the maximum size of a set of \emph{almost orthogonal} vectors: a set of non-zero vectors with the property that among any three distinct vectors, at least two are mutually orthogonal \cite{NeRo}. The union of two disjoint orthogonal sets is an almost orthogonal set of size $2n$. Rosenfeld, confirming a belief of Erd\H{o}s, proved that $2n$ is the maximum size of an almost orthogonal subset of $\R^n$~\cite{Rose}. Deaett gave a short and elegant proof of Rosenfeld's theorem~\cite{Dea}, which has similarities with an argument of Pudl\'ak~\cite{Pud}. Deaett also proved that for dimension 4 and lower every almost orthogonal set of maximum size is the union of two orthogonal sets; and provided examples in dimension 5 and higher of almost orthogonal sets of maximum size that are not the union of two disjoint orthogonal sets.

In $\C^n$, the existence of self-orthogonal vectors (like $(1,i) \in \C^2)$ changes the answer to both questions. Even in dimension two, the span of $(1,i)$ is an uncountable set of orthogonal vectors. Deaett proved, however, that in $\C^n$ equipped with the Hermitian inner product, the maximum number of almost orthogonal vectors is $2n$, generalising Rosenfeld's theorem~\cite{Dea}.

Both questions have also been investigated over finite fields. The size of the largest orthogonal set in $(\Z/(2\Z))^n$ was determined by Berlekamp~\cite{Ber} and the size of the largest orthogonal set in $(\Z/(p\Z))^n$ for primes $p$ was determined by Zame~\cite{Zame}. There are similarities in their methods. The question Berlekamp answered is equivalent to solving another question of Erd\H{o}s: determining the size of the largest family of subsets of $\{1, 2, \dots, n\}$ with the property that every two distinct elements have even intersection. Erd\H{o}s' question was solved independently by Graver~\cite{Ber,Grav}.

A key to Berlekamp's and Zame's arguments is the existence of self-orthogonal vectors. Self-orthogonal vectors exist over any finite field when the dimension is at least 3 or, in dimension 2, when the order of the field is 2 or is congruent to 1 modulo 4. There are further intricacies when working in vector spaces over finite fields. For example, as is detailed in the next subsection, in dimension 6, the dot product is equivalent to the symmetric bilinear form $(\bm{x} , \bm{y}) \mapsto x_1 y_1 - x_2 y_2 + x_3 y_3 - x_4 y_4 + x_5 y_5 - x_6 y_6$ when the order of the field is congruent to 1 modulo 4, and to the symmetric bilinear form $(\bm{x} , \bm{y}) \mapsto x_1 y_1 - x_2 y_2 + x_3 y_3 - x_4 y_4 + x_5 y_5 + x_6 y_6$ when the order of the field is congruent to 3 modulo 4. The difference points to the fact that the largest orthogonal subspace has dimension that depends on the order of the field~\cite{Vinh}. In addition to this, expressing the dot product in these equivalent ways has the advantage that it makes clear the existence of self-orthogonal vectors. 

There does not seem to be a significant difference between studying the dot product and studying any symmetric non-degenerate bilinear form and this is the approach taken in the literature recently. Ahmadi and Mohammadian~\cite{AhmMoh}, using an argument similar to Berlekamp, determined the size of the largest orthogonal set with respect to any non-degenerate symmetric bilinear form over fields of odd order (see also~\cite{HIKR, Vinh}). Ahmadi and Mohammadian also made progress on the question of determining the size of the largest almost orthogonal set with respect to any non-degenerate symmetric bilinear form.

The main purpose of this paper is to determine the size of the largest almost orthogonal set with respect to any bilinear form in any vector space over any sufficiently large finite field of odd order; and also in $(\Z/(2\Z))^n$ for sufficiently large $n$. It is worth recording here that, unlike Rosenfeld's and Deaett's theorems, it is not always the case that the size of the largest almost orthogonal set equals twice the size of the largest orthogonal set. The $(\Z/(2\Z))^n$ question has a set system formulation that can be thought of as an ``almost'' version of Berlekamp's theorem: determine the size of the largest family of subsets of $\{1, 2, \dots, n\}$ with the property that among every three distinct elements, at least two have even intersection. We show that the size of the largest family almost doubles. We also investigate the finite field analogue of another question that Erd\H{o}s asked for Euclidean space: determine the maximum size of subsets of $\R^n$ with the property that among any $k$ of their elements, at least two are mutually orthogonal~\cite{AlSz,FuSt}.

\subsection{Notation and definitions}
Throughout the paper, we use $m$ and $n$ to be positive integers, $p$ a prime and $q = p^m$. We also use $\F_q$ to denote a finite field of order $q$ and  write $\F_q^* = \F_q\setminus\{0\}$. A bilinear form over $\F_q^n$ is a mapping $\mathcal{B}:\F_q^n\times \F_q^n \rightarrow \F_q$, which takes the form
\[
\B(\bm{x}, \bm{y}) = \bm{x}^{T}A\bm{y}, \ \text{for all }\bm{x}, \bm{y}\in \F_q^n,
\]
for some $n\times n$ matrix $A$ over $\F_q$. We say $\B$ is symmetric if $A$ is a symmetric matrix and say $\B$ is degenerate if $\text{det}(A) = 0$. We call two bilinear forms equivalent if their corresponding matrices are equivalent ($A, B$ are equivalent if $A = M^T B M$ for an invertible matrix $M$). 

Fix a non-square element $\gamma \in \F_q$ and let $k= \lfloor\frac{n}{2}\rfloor$. For any bilinear form $\mathcal{B}$ over $\F_q^n$, with associated matrix $A$, we define
\[
\varepsilon(\mathcal{B}) = \begin{cases} 0, &\mbox{if } \det(A)=0; \\
1, & \mbox{if } k \ \text{is even and } \det(A)\ \text{is a non-zero square, or}\\
& \mbox{if } k \ \text{is odd  and } -\det(A) \ \text{is a non-zero square};\\
\gamma, & \mbox{if } k \ \text{is even and } \det(A) \ \text{is a non-square, or}\\
& \mbox{if } k \ \text{is odd and } -\det(A) \ \text{is a non-square.}\end{cases}
\]

For odd $q$, by a result in \cite[p. 79]{Grove}, which also appears as \cite[Theorem~1]{AhmMoh}, any non-degenerate symmetric bilinear form, $\B$, over $\F_q^n$ is equivalent to the form
\begin{equation}
\label{eqn:1stBF}
    (\bm{x}, \bm{y}) \mapsto x_1y_1 - x_2y_2+ \dots + x_{n-2}y_{n-2}-x_{n-1}y_{n-1}+\varepsilon(\B)x_n y_n
\end{equation}
 for odd $n$ and is equivalent to the form
\begin{equation}
\label{eqn:2ndBF}
    (\bm{x}, \bm{y}) \mapsto x_1y_1 - x_2y_2+ \dots + x_{n-3}y_{n-3}-x_{n-2}y_{n-2}+x_{n-1}y_{n-1}-\varepsilon(\B)x_n y_n
\end{equation}
for even $n$, where $\bm{x} = (x_1, \dots, x_n)$ and $\bm{y} = (y_1, \dots, y_n)$.

Given finite-dimensional vector spaces $V_1$ and $V_2$ over $\F_q$, with $n_i = \text{dim}(V_i)$ for $i=1,2$, we define the \emph{direct sum} $V_1\oplus V_2$ to be the vector space $V_1\times V_2$, which may be identified by $\F_q^{n_1+n_2}$. Furthermore, if $M_1$ and $M_2$ are matrices corresponding to bilinear forms over $\F_q^{n_1}$ and $\F_q^{n_2}$ respectively, we define the matrix $M_1\oplus M_2$ by
\[
M_1\oplus M_2 = \begin{pmatrix} M_1 & 0 \\ 0 & M_2\end{pmatrix},
\]
which gives rise to a bilinear form over $\F_q^{n_1+n_2}.$

For $q=2$, one can infer from (5) in \cite[p. 7]{Kne} that every non-degenerate symmetric bilinear form in odd dimension is equivalent to the dot product that arises from the $n \times n$ identity matrix $I_n$. In even dimensions every non-degenerate symmetric bilinear form is either equivalent to the dot product or to the \emph{hyperbolic form}\label{page:hyperbolic} $\Hy$ that arises from the matrix $H \oplus \dots \oplus H$, where
\[
H = \begin{pmatrix} 0 & 1 \\ 1 & 0 \end{pmatrix}.
\]

\begin{definition}
\label{def:OrthogonalSets}
We refer to two vectors $\bm{v_1}, \bm{v_2} \in \F_q^n\setminus\{\bm{0}\}$ as mutually \emph{orthogonal} if $\mathcal{B}(\bm{v_1}, \bm{v_2}) = 0$. If $\bm{v}\in \F_q^n\setminus\{\bm{0}\}$, satisfies $\mathcal{B}(\bm{v}, \bm{v}) = 0$, we refer to it as \emph{self-orthogonal}.  We call a subset $S\subset \F_q^n\setminus\{\bm{0}\}$ an \emph{orthogonal set} if every distinct pair of elements of $S$ are mutually orthogonal and we say $S\subset \F_q^n\setminus\{\bm{0}\}$ is \emph{$(k, l)$-orthogonal} if for any $k$ vectors in $S$ at least $l$ of them are pairwise mutually orthogonal. Furthermore, we call a subspace $V\subset \F_q^n$ an \emph{orthogonal subspace} if $V\setminus\{\bm{0}\}$ is an orthogonal set. We denote by $\S_{k,l} = \S_{k,l}(q, n, \B)$ the maximum size of any $(k, l)$-orthogonal subset of $\F_q^n$ with respect to $\B$. 
\end{definition}

Given a set $X\subset \F_q^n$, we use $\langle X\rangle$ to denote the subspace of $\F_q^n$ generated by $X$ and write $\langle \bm{v}_1, \cdots, \bm{v}_k\rangle$, instead of $\langle \{\bm{v}_1, \cdots, \bm{v}_k\}\rangle$. We also define the \emph{orthogonal complement} of $X$ by $X^{\perp} = \{\bm{v}\in \F_q^n: \B(\bm{v}, \bm{x}) = 0\ \text{for all } \bm{x}\in X\}$, which constitutes a subspace of $\F_q^n$.

Finally, for sets $S, S_1, \dots, S_k$, where $k\geq 2$, we write $S=S_1\sqcup S_2 \sqcup \cdots \sqcup S_k$ to mean firstly that $S = S_1\cup S_2 \cup \cdots \cup S_k$ and secondly that $S_i\cap S_j = \emptyset$ for $1\leq i<j\leq k$.
\subsection{Previous results for almost orthogonal sets}

In \cite[Examples~12--15]{AhmMoh}, explicit examples of $(3, 2)$-orthogonal sets are provided for odd $q$, showing that
\begin{equation}
\label{eqn:AhmMohLB}
\S_{3,2}(q,n, \B)\geq 
\begin{cases} 2q^{\frac{n-1}{2}}, &\mbox{if } n \ \text{is odd and }\varepsilon(\B) = 1;\\
2q^{\frac{n-1}{2}}-q+1, &\mbox{if } n \ \text{is odd and }\varepsilon(\B) = \gamma;\\
2q^{\frac{n}{2}}-q-1, &\mbox{if } n \ \text{is even and}\ \varepsilon(\mathcal{B}) = 1; \\
 2q^{\frac{n}{2}-1}+ 2, &\mbox{if } n \ \text{is even and}\ \varepsilon(\mathcal{B}) = \gamma .\end{cases}
\end{equation}
The examples also work for $q=2$, showing that
\begin{equation}
\label{eqn:AhmMohLB2}
\S_{3,2}(2,n, \cdot)\geq 
\begin{cases} 2^{\frac{n+1}{2}}, &\mbox{if $n$ is odd};\\
2^{\frac{n}{2}+1}-3, &\mbox{if  $n$ is even} .\end{cases}
\end{equation}

The authors further conjectured, in \cite[Conjectures~11 and 16]{AhmMoh}, that the inequalities in \eqref{eqn:AhmMohLB} and \eqref{eqn:AhmMohLB2} could be replaced by equalities and outlined a proof, in \cite[Theorem~17]{AhmMoh}, that for all $n$ and either choices \eqref{eqn:1stBF} and \eqref{eqn:2ndBF} of bilinear forms, $\S_{3,2}\leq 3q^{\lfloor \frac{n}{2}\rfloor}$. 
\subsection{Improved lower bounds on $\mathcal{S}_{3,2}$} 
\label{sec:examples}
We proceed to present examples of $(3,2)$-orthogonal subsets of $\F_q^n$ for odd $q$ that have slightly more elements than the examples given in \eqref{eqn:AhmMohLB}. Theorem~\ref{thm:32OS odd}, which is stated in the next section, shows that the examples described below are of maximum size. We denote by $\{\bm{e}_1, \dots, \bm{e}_n\}$ the standard basis of $\F_q^n$.

\begin{example}
\label{ex:1}
Let $q$ be odd and $n=2k+1\geq 3$ and $\B$ satisfy $\varepsilon(\B) \in \{1,\gamma\}$. Consider the two mutually disjoint orthogonal sets
\[
S_1 = \big(\{(x_1, x_1, \dots, x_k, x_k, 0): x_1, \dots, x_k\in \F_q\}\setminus\{\bm{0}\}\big) \sqcup \{\bm{e}_{n-2}+\bm{e}_{n-1}+ \bm{e}_{n}\}
\]
and
\[
S_2 = \big(\{(x_1, -x_1, \dots, x_k, -x_k, 0): x_1, \dots, x_k\in \F_q\}\setminus\{\bm{0}\}\big) \sqcup \{\bm{e}_{n-2}-\bm{e}_{n-1}-2\varepsilon(\B)^{-1} \bm{e}_{n}\}.
\]
Then, the set $S= S_1\sqcup S_2 \sqcup \{\bm{e}_{n}\}$ is a $(3, 2)$-orthogonal set of size $2q^{k}+1= 2q^{(n-1)/2}+1$.
\end{example}

\begin{example}
\label{ex:2}
Let $q$ be odd and $n=2k\geq 2$ and $\B$ satisfy $\varepsilon(\B) =1$. Consider the two mutually disjoint orthogonal sets
\[
S_1 = \{(x_1, x_1, \dots, x_k, x_k): x_1, \dots, x_k\in \F_q\}\setminus\{\bm{0}\}
\]
and
\[
S_2 = \{(x_1, -x_1, \dots, x_k, -x_k): x_1, \dots, x_k\in \F_q\} \setminus\{\bm{0}\}.
\]
Then, the set $S= S_1\sqcup S_2$ is a $(3, 2)$-orthogonal set of size $2q^{k}-2 = 2q^{n/2}-2$.
\end{example}

\begin{example}
\label{ex:3}
Let $q$ be congruent to 3 modulo 4 and $n=2k \geq 4$ and $\B$ satisfy $\varepsilon(\B) =-1$ ($-1$ is not a square). Consider the three pairwise disjoint orthogonal sets
\begin{align*}
S_1 = \big(\{(&x_1, x_1, \dots, x_{k-1}, x_{k-1}, 0, 0): x_1, \dots, x_{k-1}\in \F_q\}\setminus\{\bm{0}\}\big) \sqcup\{\bm{e}_{n-3} + \bm{e}_{n-2}+ \bm{e}_{n-1} + \bm{e}_{n},\\  & \bm{e}_{n-3} +\bm{e}_{n-2} + \bm{e}_{n-1} - \bm{e}_{n}\}
\end{align*}
and
\begin{align*}
S_2 = \big(\{(&x_1, -x_1, \dots, x_{k-1}, -x_{k-1}, 0, 0): x_1, \dots, x_{k-1}\in \F_q\} \setminus\{\bm{0}\}\big) \sqcup\{\bm{e}_{n-3}- \bm{e}_{n-2} - \bm{e}_{n-1} - \bm{e}_n,\\ &\bm{e}_{n-3}- \bm{e}_{n-2} -\bm{e}_{n-1} + \bm{e}_n\},
\end{align*}
and
\[
S_3 = \{\bm{e}_{n-1} + \bm{e}_{n} , \bm{e}_{n-1} - \bm{e}_{n}\}.
\]
Then, the set $S= S_1\sqcup S_2\sqcup S_3$ is a $(3, 2)$-orthogonal set of size $2q^{k-1}+4=2q^{n/2-1}+4$. For $n=2$, $\{\bm{e}_1, 2 \bm{e}_1, \bm{e}_2, 2 \bm{e}_2\}$ is a $(3,2)$-orthogonal set of size 4.
\end{example}

Next, we provide examples for $q=2$. Theorem~\ref{thm:q2even} shows they are of maximum size. The following example is obtained by adding a single element to the example given by \cite[Example~12]{AhmMoh}.

\begin{example}
\label{ex:4}
Let $n=2k+1\geq 3$ and $\B$ denote the dot product in $\F_2^n$. Consider the disjoint orthogonal sets
\[
S_1 = \big(\{(0, x_1, x_1 \dots, x_k, x_k): x_1, \dots, x_k\in \F_2\} \setminus\{\bm{0}\}\big) \sqcup  \{\bm{e}_1\}
\]
and
\[
S_2 = \big(\{(x_1, x_1, \dots, x_k, x_k, 0): x_1, \dots, x_k\in \F_2\}\setminus\{\bm{0}\}\big) \sqcup \{\bm{e}_n\}.
\]
Then, the set $S= S_1\sqcup S_2  \sqcup \{\bm{e}_1 + \bm{e}_2+ \dots + \bm{e}_n\}$ is a $(3, 2)$-orthogonal set of size $2^{k+1}+1 = 2^{(n+1)/2}+1$.
\end{example}

The example below is the same as \cite[Example~14]{AhmMoh}.

\begin{example}
\label{ex:5}
Let $n=2k\geq 2$ and $\B$ denote the dot product in $\F_2^n$. Consider the disjoint orthogonal sets
\[
S_1 = \{(x_1, x_1, \dots, x_k, x_k): x_1, \dots, x_k\in \F_2\}\setminus\{\bm{0}\}
\]
and
\[
S_2 = \{(x_k, x_1, x_1, \dots, x_{k-1}, x_{k-1}, x_k): x_1, \dots, x_k\in \F_2\} \setminus\{\bm{0}\}.
\]
Then, noting $(1, 1, \dots, 1, 1) \in S_1\cap S_2$, it follows that the set $S= S_1\cup S_2$ is a $(3, 2)$-orthogonal set of size $2^{k+1}-3 = 2^{n/2+1}-3$.
\end{example}

Our final example concerns the hyperbolic form.

\begin{example}
\label{ex:6}
Let $n=2k\geq 2$ and $\Hy$ denote the hyperbolic form in $\F_2^n$. Consider the disjoint orthogonal sets
\[
S_1 = \langle \bm{e}_1, \bm{e}_3, \bm{e}_5, \dots, \bm{e}_{2k-1}\rangle \setminus\{\bm{0}\}
\]
and
\[
S_2 =  \langle \bm{e}_2, \bm{e}_4, \bm{e}_6, \dots, \bm{e}_{2k}\rangle \setminus\{\bm{0}\}.
\]
It follows that the set $S= S_1\cup S_2$ is a $(3, 2)$-orthogonal set of size $2^{k+1}-2 = 2^{n/2+1}-2$.
\end{example}

\section{Main results}

Our first main result is an upper bound on the size of $(3,2)$-orthogonal sets for odd $q$.

\begin{theorem}
\label{thm:32OS odd}
Let $n\geq 0$ be an integer and $q\geq 7$ be an odd prime power. If $S\subset\F_q^n$ is $(3, 2)$-orthogonal with respect to a non-degenerate symmetric bilinear form $\B$, then
\[
|S| \leq \begin{cases} 2q^{\frac{n-1}{2}}+1, &\mbox{if } n \ \text{is odd};\\
2q^{\frac{n}{2}}-2, &\mbox{if } n \ \text{is even and}\ \varepsilon(\mathcal{B}) = 1; \\
 2q^{\frac{n}{2}-1}+ 4, &\mbox{if } n\geq 4 \ \text{is even and}\ \varepsilon(\mathcal{B}) = \gamma ; \\
  4, &\mbox{if } n=2 \ \text{ and}\ \varepsilon(\mathcal{B}) = \gamma .\end{cases}
\]
\end{theorem}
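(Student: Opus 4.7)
My plan is induction on $n$, with small-dimensional base cases (and separately the $n = 2$, $\varepsilon(\B) = \gamma$ case listed in the statement) handled by direct computation. The starting observation is that the $(3,2)$-orthogonality of $S$ is exactly the statement that the \emph{non-orthogonality graph} $G$ on $S$ --- with $v \sim w$ iff $\B(v, w) \neq 0$ --- is triangle-free. In particular, for every $v \in S$, the non-orthogonal neighborhood
\[
N(v) \;=\; \{w \in S \setminus \{v\} : \B(v, w) \neq 0\}
\]
is itself an orthogonal set, so its size is bounded by the Ahmadi-Mohammadian maximum \cite{AhmMoh}.

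The naive single-vertex reduction --- split $S = \{v\} \sqcup N(v) \sqcup O_S(v)$ with $O_S(v) = S \cap v^{\perp} \setminus \{v\}$ and recurse on $O_S(v)$ in the non-degenerate hyperplane $v^{\perp}$ when $v$ is not self-orthogonal --- only reproduces the weaker bound $3 q^{\lfloor n/2\rfloor}$ of \cite[Theorem~17]{AhmMoh}, off from the target by a factor $\tfrac{3}{2}$. To sharpen it I would use a two-vertex reduction. Pick $v_1, v_2 \in S$ with $\B(v_1, v_2) \neq 0$; if no such pair exists, $S$ is itself an orthogonal set and the target bound is immediate. Triangle-freeness then forces
\[
S \;=\; \{v_1, v_2\} \sqcup I \sqcup A \sqcup B,
\]
with $I = O_S(v_1) \cap O_S(v_2) \subset \langle v_1, v_2\rangle^{\perp}$, $A = O_S(v_1) \setminus O_S(v_2)$, and $B = O_S(v_2) \setminus O_S(v_1)$. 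A second application of triangle-freeness, this time to triples $\{u, u', v_2\}$ with $u, u' \in A$, shows that $A$ is itself an orthogonal set (in $v_1^{\perp}$), and symmetrically $B$ is orthogonal in $v_2^{\perp}$. When $\langle v_1, v_2\rangle$ is non-degenerate, the inductive hypothesis bounds $|I|$ in the $(n-2)$-dimensional space $\langle v_1, v_2\rangle^{\perp}$, while $|A|$ and $|B|$ are bounded by the maximum orthogonal set in the respective hyperplane; this gives a recursion roughly of the shape $T(n) \leq 2 + T(n-2) + 2 M(n-1)$.

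Two refinements are needed to match the precise constants in the statement. First, the pair $(v_1, v_2)$ should be chosen so that the type $\varepsilon(\B|_{\langle v_1, v_2\rangle^{\perp}})$ aligns with the case of the induction hypothesis being applied; taking both $v_i$ self-orthogonal with $\B(v_1, v_2) \neq 0$ turns $\langle v_1, v_2\rangle$ into a hyperbolic plane and preserves $\varepsilon(\B)$ on the complement. Second, each element of $A$ lies in $v_1^{\perp}$ but avoids the hyperplane $v_1^{\perp} \cap v_2^{\perp}$ (of codimension one inside $v_1^{\perp}$); exploiting this avoidance sharpens the orthogonal-set bound on $|A|$ (symmetrically $|B|$) by the amount needed to produce the exact additive constants $+1$, $-2$, $+4$ distinguishing the four cases of the theorem.

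The main obstacle I foresee is the case in which $S$ contains no two non-orthogonal self-orthogonal vectors, which forces the pair $(v_1, v_2)$ to include a non-self-orthogonal vertex. Then $\langle v_1, v_2\rangle$ is no longer hyperbolic, the type $\varepsilon$ on the $(n-2)$-dimensional complement shifts, and the two-step induction risks accumulating error across the dimension-drop; a separate argument --- perhaps exploiting that $S$ must then contain many non-self-orthogonal vectors, so that a well-chosen single-vertex reduction already suffices --- is needed in this regime. The hypothesis $q \geq 7$ likely enters here precisely to guarantee enough square and non-square elements in $\F_q^*$ to realise all the required local configurations of $\B|_{\langle v_1, v_2\rangle}$ throughout the case analysis.
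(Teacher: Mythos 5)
Your skeleton (induction on $n$, the triangle-free non-orthogonality graph, the two-vertex decomposition around a non-orthogonal pair, and choosing both vectors self-orthogonal so that $\varepsilon(\mathcal{B})$ is preserved on $\{\bm{v}_1,\bm{v}_2\}^\perp$) coincides with the paper's set-up, but the decisive quantitative step has a genuine gap. Your second ``refinement'' claims that because $A$ is an orthogonal set in $\bm{v}_1^\perp$ avoiding the hyperplane $\bm{v}_1^\perp\cap\bm{v}_2^\perp$, its size bound improves ``by the amount needed to produce the exact additive constants.'' If you quantify this via the structure lemma for orthogonal sets, the self-orthogonal part of $A$ lies in an orthogonal subspace $V_A$ of dimension $d\leq d_n$ and misses a hyperplane of it, so $|A|\leq q^{d}-q^{d-1}+(n-2d)$, whence $|A|+|B|\leq 2q^{d_n}-2q^{d_n-1}+2(n-2d_n)$; combining with the inductive bound $|I|\leq 2q^{d_n-1}+f$ gives $|S|\leq 2q^{d_n}+f+2(n-2d_n)+2$. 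The term $2q^{d_n-1}$ does cancel, which is a nice observation, but the leftover additive excess ($+4$ per step for odd $n$, $+2$ for even $n$ with $\varepsilon(\mathcal{B})=1$, $+6$ for $\varepsilon(\mathcal{B})=\gamma$) means the constants $+1,-2,+4$ strictly degrade at every application of the induction hypothesis: the recursion $T(n)\leq 2+T(n-2)+|A|+|B|$ never closes on the stated values, and no argument is offered for a stronger hyperplane-avoidance bound that would.

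The paper closes this gap by a different mechanism. After disposing of the case where no two linearly independent self-orthogonal vectors of $S$ are non-orthogonal (bounding the non-self-orthogonal vectors by $\max\{2n,R(3,n)-1\}$ via Deaett's argument and Ramsey numbers --- the regime you flag as an obstacle but leave open), it shows that either $|S|$ is already below the target, or the orthogonal subspaces $V_{\bm{v}},V_{\bm{w}}$ spanned by the self-orthogonal parts of the two neighbourhoods both have the maximum dimension $d_n$ and contain $\bm{w}$ and $\bm{v}$ respectively. The induction hypothesis on $S_{\bm{v}\bm{w}}$ is then used only in ``either we are done or\dots'' steps where there is room to spare; the exact constants come instead from proving that everything outside $V_{\bm{v}}\cup V_{\bm{w}}$ --- including most of $S_{\bm{v}\bm{w}}$ --- consists of at most a handful of vectors, because any orthogonal pair out there would enlarge a maximal orthogonal subspace. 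The hardest case, even $n$ with $\varepsilon(\mathcal{B})=\gamma$, further requires ruling out a $5$-cycle configuration among these exceptional vectors by an explicit linear-algebra computation; nothing in your outline plays this role. (Also, $q\geq 7$ enters to absorb lower-order terms such as $5q^{d_n-1}+6\leq q^{d_n}$, not to supply squares and non-squares for local models of $\mathcal{B}$.)
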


Combined with Examples \ref{ex:1}, \ref{ex:2}, and~\ref{ex:3}, Theorem~\ref{thm:32OS odd} establishes the value of $\mathcal{S}_{3,2}$ for all $n$, sufficiently large $q$, and $\B$:
\[
\mathcal{S}_{3,2}(q,n,\B) = \begin{cases} 2q^{\frac{n-1}{2}}+1, &\mbox{if } n\geq 3 \ \text{is odd};\\
2q^{\frac{n}{2}}-2, &\mbox{if } n\geq 2 \ \text{is even and}\ \varepsilon(\mathcal{B}) = 1; \\
 2q^{\frac{n}{2}-1}+ 4, &\mbox{if } n\geq 4 \ \text{is even and}\ \varepsilon(\mathcal{B}) = \gamma ; \\
  4, &\mbox{if } n=2 \ \text{ and}\ \varepsilon(\mathcal{B}) = \gamma .\end{cases}
\]

In contrast to the Euclidean space $\R^n$, $\mathcal{S}_{3,2}$ is sometimes larger than twice the size of the largest orthogonal set (specifically when $n\geq 3$ is odd or when $n\geq 4$ is even and $\varepsilon(\B) = \gamma$). See \cite{AhmMoh} or Lemma~\ref{lem:OSUB} below for the size of the largest orthogonal set for the various possibilities of $n$, $q$, and $\B$.

The proof of Theorem~\ref{thm:32OS odd} relies on the framework developed by Ahmadi and Mohammadian~\cite{AhmMoh}. It also has similarities with the work of Berlekamp~\cite{Ber} and the paper of Deaett~\cite{Dea}. In Section~\ref{sec: character} we present a different argument for even $n$ with $\varepsilon(\B)=1$ that is based on character sum estimates. The proof in Section~\ref{sec: character} works for all odd $q$.

We also answer the corresponding question for $q=2$.

\begin{theorem}
\label{thm:q2even}
Let $n$ be an integer. If $S \subset \F_2^n$ is a $(3,2)$-orthogonal with respect to the dot product, then 
\[
    |S| \leq  \begin{cases}
    2^{\frac{n+1}{2}}+1, \quad &\text{if}\quad n\geq 21 \ \text{is odd};\\
    2^{\frac{n}{2} +1} -3, \quad &\text{if}\quad n\geq 18 \ \text{is even}.
    \end{cases}
\]
For even $n \geq 2$ and $S \subset \F_2^n$ be $(3,2)$-orthogonal with respect to the hyperbolic form $\Hy$,
\[
|S| \leq 2^{\frac{n}{2}+1} - 2.
\]
\end{theorem}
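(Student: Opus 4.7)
The plan is to adapt the Ahmadi--Mohammadian framework to characteristic $2$. For a $(3,2)$-orthogonal set $S\subseteq\F_2^n$, let $G$ be the \emph{non-orthogonality graph} on $S$ whose edges are the pairs $(u,w)$ with $\B(u,w)\neq 0$ (respectively $\Hy(u,w)\neq 0$). The $(3,2)$ hypothesis says exactly that $G$ is triangle-free, and therefore for every $v\in S$ the non-orthogonal neighborhood $N(v)=\{u\in S\setminus\{v\}:\B(u,v)\neq 0\}$ is pairwise orthogonal, i.e.\ an orthogonal subset of $\F_2^n\setminus\{\bm{0}\}$. This allows one to transfer Berlekamp's sharp bound for orthogonal sets (and its analogue for $\Hy$) directly into a bound on any ``degree'' $|N(v)|$.

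I would then induct on $n$. Fix a pivot $v\in S$ and split $S=N(v)\sqcup(S\cap v^{\perp})$, bounding $|N(v)|$ by the maximum orthogonal-set size and recursing on $S\cap v^{\perp}$. If $v$ is non-self-orthogonal, then $\B$ is non-degenerate on $v^{\perp}$ of dimension $n-1$ and the induction hypothesis applies directly. If $v$ is self-orthogonal, then $\langle v\rangle$ is the radical of $\B|_{v^{\perp}}$, so I descend to the quotient $v^{\perp}/\langle v\rangle$, which carries a non-degenerate form of dimension $n-2$; the reduction map is at most $2$-to-$1$ on $S\cap v^{\perp}$, giving $|S\cap v^{\perp}|\leq 2\,\S_{3,2}(2,n-2,\cdot)$. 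A careful tracking of which form appears on the quotient (dot product versus hyperbolic) keeps the recursion consistent.

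For the hyperbolic case the induction runs cleanly: every vector of $\F_2^n$ is $\Hy$-self-orthogonal and $\F_2^n=V\oplus V'$ for two maximal totally isotropic subspaces of dimension $n/2$ with $V=V^{\perp}$ and $V'=(V')^{\perp}$, so the induced form on $v^{\perp}/\langle v\rangle$ remains hyperbolic and the induction yields $\S_{3,2}(2,n,\Hy)\leq 2^{n/2+1}-2$, matching Example~\ref{ex:6}. For the dot product the naive version of this induction overshoots the target by a factor close to $3/2$. To recover the sharp constant I would pivot instead on a \emph{non-orthogonal pair} $(u,w)\in E(G)$; the triangle-free property then forces $S\setminus\{u,w\}\subseteq u^{\perp}\cup w^{\perp}$, and the sharp bound follows from a careful accounting of the overlap $S\cap u^{\perp}\cap w^{\perp}$ together with a case split on the parities of $\B(u,u)$ and $\B(w,w)$.

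The main obstacle is this delicate edge-pivot analysis in the dot-product setting. In characteristic~$2$ one cannot rescale vectors to normalise $\B(v,v)$, and so self-orthogonal and non-self-orthogonal elements of $S$ must be tracked separately throughout the induction. Several small-dimensional $(3,2)$-orthogonal sets fit neither Example~\ref{ex:4} nor Example~\ref{ex:5}; eliminating these atypical configurations by hand accounts for the thresholds $n\geq 21$ (odd case) and $n\geq 18$ (even case), below which the sharp formula can fail and the base of the induction would break down.
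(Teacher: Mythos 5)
Your skeleton (triangle-free non-orthogonality graph, neighbourhoods $N(\bm{v})$ being orthogonal sets, induction on $n$, pivoting on a non-orthogonal pair, tracking self-orthogonal vectors separately) matches the paper's strategy, but there is a genuine quantitative gap: you never identify the fact that makes the constant $2$ attainable over $\F_2$. The paper's key ingredient (Lemma~\ref{lem:Rv}, fed into Lemma~\ref{lem:Sv}) is that for every $\bm{v}\in S$ the self-orthogonal part $R_{\bm{v}}$ of its neighbourhood satisfies $|R_{\bm{v}}|\leq |V_{\bm{v}}|/2$, because $R_{\bm{v}}$ is disjoint from $R_{\bm{v}}+R_{\bm{v}}$ inside the orthogonal subspace $V_{\bm{v}}$ it spans; hence $|N(\bm{v})|\leq 2^{\lfloor n/2\rfloor-1}+O(n)$ rather than the Berlekamp bound $\approx 2^{n/2}$ that you propose to invoke. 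Without this halving, your edge pivot only gives $|S|\leq |N(\bm{u})|+|N(\bm{w})|+|S\cap\{\bm{u},\bm{w}\}^{\perp}|\approx 2^{n/2}+2^{n/2}+2^{n/2}$, exactly the $3/2$ overshoot you acknowledge, and no ``accounting of the overlap $S\cap \bm{u}^{\perp}\cap\bm{w}^{\perp}$'' can repair it, since the three pieces are already essentially disjoint: the gain must come from halving the two neighbourhood terms, not from intersections.

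The single-vertex pivot with the quotient $\bm{v}^{\perp}/\langle\bm{v}\rangle$ is also quantitatively fatal, even in the hyperbolic case that you claim runs cleanly: the $2$-to-$1$ loss gives $|S|\leq |N(\bm{v})|+2\,\mathcal{S}_{3,2}(2,n-2,\Hy)=|N(\bm{v})|+2(2^{n/2}-2)$, which already exceeds the target $2^{n/2+1}-2$ once $|N(\bm{v})|\geq 3$. The paper instead pivots on a pair $\bm{v},\bm{w}$ with $\Hy(\bm{v},\bm{w})=1$ and uses Lemma~\ref{lem: induction}~(iii) to see that $\Hy$ restricted to $\{\bm{v},\bm{w}\}^{\perp}$ is non-degenerate and again hyperbolic, so the recursion runs with no factor-$2$ loss: $|S|\leq 2^{n/2-1}+2^{n/2-1}+(2^{n/2}-2)$; the same pair pivot (with part (iii) for the dot product when both pivots are self-orthogonal, and part (i) otherwise) replaces your quotient step throughout. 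Finally, the thresholds $n\geq 21$ and $n\geq 18$ are not only about sporadic small configurations: they arise because the set $D$ of non-self-orthogonal vectors is controlled only by the Ramsey-type bound $|D|=O(n^2)$ of Lemma~\ref{lem: D}, and this polynomial error must be absorbed into the exponential main term through a weak bound (Lemma~\ref{lem:weak}, obtained by applying the all-self-orthogonal case, Proposition~\ref{prop:q2evenI}, to $S\setminus D$) and then a multi-case analysis on edges between $D$ and $S\setminus D$ (Proposition~\ref{prop:q2evenII}); your proposal does not supply this mechanism, so the dot-product case remains unproved as written.
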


Combined with Examples \ref{ex:4}, \ref{ex:5}, and~\ref{ex:6}, Theorem~\ref{thm:q2even} establishes the value of $\mathcal{S}_{3,2}$ for all sufficiently large $n$:
\[
\mathcal{S}_{3,2}(2,n,\B) = \begin{cases} 2^{\frac{n+1}{2}}+1, &\mbox{if } n\geq 21 \ \text{is odd and } \B = \cdot \,;\\
2^{\frac{n}{2}+1}-3, &\mbox{if } n\geq 18 \ \text{is even and} \ \mathcal{B} = \cdot\,; \\
2^{\frac{n}{2}+1}-2, &\mbox{if } n\geq 2 \ \text{is even and} \ \mathcal{B} = \Hy. \end{cases}
\]

As highlighted in \cite{AhmMoh}, the quantity $\mathcal{S}_{3,2}(2,n,\cdot) $ may be interpreted as the size of a maximally large family $\mathcal{F}$ of non-empty subsets of an $n$-element set such that among every three distinct elements of  $\mathcal{F}$, there is a pair of sets whose intersection is of even cardinality. The values above confirm \cite[Conjecture 16]{AhmMoh} of Ahmadi and Mohammadian for sufficiently large, even $n$. Although, for odd $n$, we have shown that the relevant value is larger by one than what was conjectured. The sufficiently large $n$ assumption cannot be removed, see Remark~\ref{rem:2}.

We also prove an upper bound for $\mathcal{S}_{k,2}(q, n, \B)$. This is the finite field analogue of another question of Erd\H{o}s~\cite{AlSz,FuSt}.

\begin{theorem}
\label{thm:k2OS}
Let $q$ be an odd prime power and $k\geq 2$ be an integer. Suppose that $S\subset \F_q^n\setminus\{\bm{0}\}$ is $(k, 2)$-orthogonal with respect to a non-degenerate symmetric bilinear form $\B$. Then
$$
|S| \leq \bigg(k-1 + \frac{(k-1)^2}{q-k+1}\bigg)(q^{n/2}+1).
$$
\end{theorem}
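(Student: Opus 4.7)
The plan is strong induction on $k$, with the base case $k=2$ corresponding to orthogonal sets, whose size is bounded by $q^{n/2}+1$ via the Ahmadi--Mohammadian bound (well within the stated bound). For the inductive step, let $S$ be a $(k,2)$-orthogonal subset of $\F_q^n$ and consider the \emph{non-orthogonality graph} $G$ on $S$, with $uv$ an edge whenever $\B(u,v)\neq 0$. The $(k,2)$-hypothesis is equivalent to $G$ being $K_k$-free. A key observation used repeatedly is that for every $v\in S$, the neighborhood $N_G(v)$ is itself $(k-1,2)$-orthogonal: any $k-1$ vectors $w_1,\ldots,w_{k-1}\in N_G(v)$ together with $v$ form a $k$-subset of $S$, which must contain an orthogonal pair; since $v$ is non-orthogonal to each $w_i$ by construction, the pair must lie among the $w_i$'s themselves.

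The argument then splits into two cases according to whether $G$ contains a $K_{k-1}$. If $G$ has no $K_{k-1}$, then $S$ itself is $(k-1,2)$-orthogonal and the bound follows directly from the inductive hypothesis at $k-1$ (and is strictly stronger than the target). If $G$ does contain a $K_{k-1}$ on vertices $v_1,\ldots,v_{k-1}\in S$, then for every other $v\in S$ the $k$-subset $\{v,v_1,\ldots,v_{k-1}\}$ must contain an orthogonal pair; since the $v_i$'s are pairwise non-orthogonal by construction, that pair must be of the form $(v,v_i)$, and hence
\[
S\subseteq\{v_1,\ldots,v_{k-1}\}\cup\bigcup_{i=1}^{k-1}(v_i^\perp\cap S).
\]
For each $i$, the set $v_i^\perp\cap S$ is $(k,2)$-orthogonal with respect to $\B$ restricted to the hyperplane $v_i^\perp$. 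A nested induction on the ambient dimension $n$ then gives $|v_i^\perp\cap S|\leq C_k(n-1)$, where I write $C_k(n)=\frac{(k-1)q}{q-k+1}(q^{n/2}+1)$ for the target bound. Summing these $k-1$ contributions, adding the clique vertices, and rearranging via the identity $\frac{(k-1)q}{q-k+1}=(k-1)+\frac{(k-1)^2}{q-k+1}$ is what produces the exact shape of the theorem's bound.

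The principal technical obstacle is the case in which some $v_i$ is self-orthogonal: then $\B|_{v_i^\perp}$ is degenerate with kernel $\langle v_i\rangle$, and the nested induction on $n$ does not apply verbatim. To handle this, I would pass to the quotient $v_i^\perp/\langle v_i\rangle$, which carries a non-degenerate form in dimension $n-2$, apply the inductive hypothesis on the image of $v_i^\perp\cap S$, and multiply by the fiber size $q$. This quotient estimate is weaker by roughly a factor of $\sqrt{q}$ than the non-degenerate one, so verifying that all the contributions combine to close the induction is the most delicate arithmetic step and likely forces either a careful choice of the $K_{k-1}$ using non-self-orthogonal vertices whenever the orthogonality structure permits, or a separate argument treating the special situation where $S$ lies entirely in the isotropic cone of $\B$.
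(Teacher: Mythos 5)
Your reduction has two gaps, and at least one of them is fatal to the stated theorem rather than a repairable technicality. First, the isotropic case is not a side issue you can defer: in the extremal configurations (including the paper's own examples, e.g.\ the three sets $S_1,S_2,S_3$ for $k=n=4$) \emph{every} vector of $S$ is self-orthogonal, so a $K_{k-1}$ consisting of anisotropic vertices need not exist and ``choosing the clique carefully'' is not available in general. Moreover the quotient route does not close the arithmetic even for a single term: with your notation $C_k(n)=\frac{(k-1)q}{q-k+1}\bigl(q^{n/2}+1\bigr)$, passing to $v_i^\perp/\langle v_i\rangle$ and multiplying by the fibre size gives $q\,C_k(n-2)=\frac{(k-1)q}{q-k+1}\bigl(q^{n/2}+q\bigr)>C_k(n)$, so one isotropic clique vertex already exhausts the whole budget before you add the other $k-2$ hyperplane contributions. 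Second, even in the clean case where every $v_i$ is anisotropic, the recursion $|S|\le (k-1)+(k-1)\,C_k(n-1)$ multiplies the bound by $k-1$ per unit drop in dimension while the target only grows by a factor of $q^{1/2}$; the inductive step requires $(k-1)+(k-2)A\le A\,q^{(n-1)/2}\bigl(q^{1/2}-(k-1)\bigr)$ with $A=\frac{(k-1)q}{q-k+1}$, which is impossible once $k-1\ge \sqrt q$. The theorem, however, is asserted (and is meaningful) for the whole range $2\le k\le q$, e.g.\ $q=7$, $k=5$, where your induction cannot terminate. So the proposal proves at best a weaker statement restricted to $k-1<\sqrt q$ and anisotropic cliques.

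The paper's proof is entirely different and avoids both problems: it makes no induction at all. One applies Tur\'an's theorem to the non-orthogonality graph ($K_k$-free), which forces at least $\frac{|S|^2}{2(k-1)}-\frac{|S|}{2}$ orthogonal pairs, and compares this with the character-sum (point--hyperplane incidence) estimate $\bigl|O(S,S)-|S|^2/q\bigr|\le |S|\,q^{n/2}$; solving the resulting quadratic inequality in $|S|$ yields exactly $|S|\le\frac{(k-1)q}{q-k+1}\bigl(q^{n/2}+1\bigr)$. That argument is insensitive to whether vectors are isotropic and to the relative size of $k$ and $\sqrt q$, which is precisely where your covering-and-recursing scheme breaks down. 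If you want to salvage your approach, you would need a substitute for the hyperplane step that does not lose the factor $k-1$ per dimension --- in effect, some global count of orthogonal pairs --- and at that point you are led back to an estimate of the $O(S,S)$ type.
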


Theorem~\ref{thm:k2OS} implies
\[
\mathcal{S}_{k,2} \leq (k-1+ o_{q \to \infty}(1)) q^{n/2},
\]
which is asymptotically sharp in some cases. For example for odd $q$, $k=n=4$ and $\varepsilon(\B)=1$ the union of the following three pairwise disjoint orthogonal sets has size $3 (q^2 -1)$:
\[
S_1 = \{ (x_1, x_1, x_2, x_2 ) : x_1, x_2 \in \F_q \} \setminus \{\bm{0}\}
\]
and
\[
S_2 = \{ (x_1, -x_1, x_2, -x_2 ) : x_1, x_2 \in \F_q \} \setminus \{\bm{0}\}
\]
and
\[
S_3 = \{ (x_1, x_2, x_2, -x_1 ) : x_1, x_2 \in \F_q \} \setminus \{\bm{0}\}.
\]

\subsection{Outline of the proofs of Theorems~\ref{thm:32OS odd} and~\ref{thm:q2even}}

The proofs of Theorems~\ref{thm:32OS odd} and~\ref{thm:q2even} are based on an inductive scheme developed by Ahmadi and Mohammadian~\cite{AhmMoh}. To outline the argument let us denote by $d_n=d_n(q,\B)$ the dimension of the largest orthogonal subspace of $\F_q^n$ with respect to a non-degenerate symmetric bilinear form $\B$. Both theorems take the form
\begin{equation}
\label{eqn:f}
\S_{3,2}(q,n, \B) = 2 q^{d_n} + f(q,n,\B),
\end{equation}
where $f(q,n,\B) \in \{-3,-2,1,4\}$. We show this by proving by induction a weaker statement of the form $\S_{3,2}(q,n, \B) \leq (2+o(1)) q^{d_n}$. Note here that the $o(1)$ term is for $q \to \infty$ for odd $q$ and $n \to \infty$ for $q=2$. A technical difficulty in carrying out the induction is that one must ensure that the restriction of $\B$ to the orthogonal complement considered is non-degenerate. 

The inductive argument that proves the weaker bound is based on the basic observation that for every $\bm{v} \in S$, the set of elements in $S \setminus\{\bm{v}\}$ not orthogonal to $\bm{v}$ constitute an orthogonal set. The structure of orthogonal sets has been determined by Berlekamp, and Ahmadi and Mohammadian~\cite{Ber,AhmMoh}. A key feature is that they contain few elements not in a single orthogonal subspace. We make repeated use of this fact. The second basic fact we use to our advantage for large odd $q$ is that a proper subspace of a vector space is significantly smaller than the vector space. 

Once the weak bound has been established, it is used to determine $f(q,n,\B)$. It is at this point that different arguments must be used according to $(q,n,\B)$. The key observation, implicit in the literature, is that if an orthogonal set contains just a few elements that are not self-orthogonal, then it is much smaller than $q^{d_n}$ (for large $q$). 

For $q=2$, which is not large, slightly different arguments are utilised. The basic fact that drives the proof is that, unlike for odd $q$, the set of vectors not orthogonal to any $\bm{v}$ contains at most half of any orthogonal subspace.  

The proofs of Theorems~\ref{thm:32OS odd} and \ref{thm:q2even} probably yield a characterisation of nearly extremal sets: they are mostly contained in two disjoint orthogonal subspaces of maximum dimension.

\section{Preparations}

We begin with some basic facts about Ramsey numbers. Given positive integers $s,t$ the \emph{Ramsey number} $R(s,t)$ is the least integer with the property that every graph on $R(s,t)$ vertices either contains a $K_s$ or the complement of the graph contains a $K_t$. We will use the following bounds:
\begin{equation}
\label{eqn:Ramsey}
R(3,3) = 6, R(3,4) =9, R(s,t) \leq \binom{s+t-2}{s-1}.
\end{equation}
The connection between Ramsey numbers and almost orthogonal sets goes back to at least the paper of Deaett~\cite{Dea}. A connection with other similar questions of Erd\H{o}s is detailed in~\cite{NeRo}. We deduce the following elementary observation concerning graphs. 

\begin{lemma} 
\label{lem: graph theory}
A triangle-free graph with the property that its complement is also triangle-free is either the 5-cycle or has at most 4 vertices.
\end{lemma}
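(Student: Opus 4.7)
The plan is to deduce this directly from the Ramsey bound $R(3,3)=6$ together with a short degree argument on 5 vertices. Let $G$ be a triangle-free graph whose complement $\overline{G}$ is also triangle-free. Since $R(3,3)=6$ from \eqref{eqn:Ramsey}, any graph on at least $6$ vertices contains either a triangle or an independent triple (i.e.\ a triangle in the complement), so $|V(G)| \leq 5$. Hence it only remains to analyse the case $|V(G)|=5$ and show that the unique such graph is the $5$-cycle.

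Suppose now $|V(G)|=5$ and let $v$ be any vertex, writing $d(v)$ for its degree in $G$. If $d(v) \geq 3$, then any three neighbours of $v$ form an independent set in $G$ (else together with $v$ they create a triangle), hence a triangle in $\overline{G}$, contradicting the assumption. So $d(v) \leq 2$. Applying the same reasoning in $\overline{G}$, whose vertices have degree $4-d(v)$, gives $4-d(v) \leq 2$, i.e.\ $d(v)\geq 2$. Therefore $G$ is $2$-regular.

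A $2$-regular simple graph is a disjoint union of cycles, so on $5$ vertices the only possibilities are $C_5$ or $C_3 \sqcup C_2$. The latter contains both a triangle and (since $C_2$ is not a simple graph) is not even realisable, so $G \cong C_5$, which completes the proof. The argument is essentially immediate; the only modest point to watch is invoking $R(3,3)=6$ in the correct direction and observing that in a $2$-regular graph on $5$ vertices the pigeonhole on cycle lengths leaves $C_5$ as the unique triangle-free option.
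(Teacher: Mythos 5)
Your proof is correct and follows essentially the same route as the paper: the bound $R(3,3)=6$ gives at most $5$ vertices, and a degree argument in $G$ and $\overline{G}$ forces $2$-regularity, after which the only triangle-free option on $5$ vertices is the $5$-cycle. The only cosmetic difference is that you classify $2$-regular graphs as disjoint unions of cycles, while the paper rules out $3$- and $4$-cycles directly; both close the argument in the same way.
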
  

\begin{proof}
We denote by $H$ the graph. Its order is at most $R(3,3)-1 =5$ (by~\eqref{eqn:Ramsey}).

Suppose now that the order of $H$ is 5. Note that $H$ does not have a vertex of degree at least 3. This is because if such a vertex existed, then either two of its neighbours would be connected by an edge, giving rise to a triangle; or none of its neighbours would be connected by an edge, giving a triangle in the complement. Similarly, the complement  has no edge of degree at least 3. Therefore all vertices of the graph have degree 2. Finite graphs of constant degree 2 contain a cycle. The graph $H$ has no 3-cycle. It does not contain a 4-cycle (the fifth vertex would be isolated). Therefore it contains a 5-cycle, which is the entire graph.
\end{proof}

We proceed with results concerning orthogonal sets. The most important is a structural characterisation of orthogonal sets which we take from \cite[Lemma~3]{AhmMoh} -- see also~\cite{Ber} for $q=2$.
\begin{lemma}
 \label{claim:Struct}
    Let $\mathcal{B}$ denote a non-degenerate, symmetric bilinear form over $\F_q^n$, where $q$ is a prime power and $n\geq 2$. Suppose that $S\subset \F_q^n$ is an orthogonal set. Then, there exists an orthogonal subspace $V \subset  \{\bm{x}\in \F_q^n:\mathcal{B}(\bm{x}, \bm{x})=0\}$ and a set $T= \{\bm{x}\in S:\mathcal{B}(\bm{x}, \bm{x})\not=0\}$, such that $S\subset V\sqcup T$ and $2 \dim(V) + |T| \leq n.$
    \end{lemma}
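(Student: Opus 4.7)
The plan is to split $S=S'\sqcup T$, where $S'=\{\bm{x}\in S:\B(\bm{x},\bm{x})=0\}$, and to take $V=\langle S'\rangle$. Three things must be checked: that $V$ is an orthogonal subspace contained in the isotropic cone, that $S\subset V\sqcup T$, and that $2\dim V+|T|\leq n$. The first two will be quick, and the real content is the dimension bound, which will follow from the classical fact that a totally isotropic subspace of a non-degenerate symmetric bilinear form space of dimension $m$ has dimension at most $m/2$.

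For the first property, any two vectors in $V$ take the form $\bm{v}=\sum_i c_i\bm{x}_i$ and $\bm{w}=\sum_j d_j\bm{x}_j$ with $\bm{x}_i,\bm{x}_j\in S'$, and
\[
\B(\bm{v},\bm{w})=\sum_{i,j}c_id_j\B(\bm{x}_i,\bm{x}_j)=0,
\]
since the diagonal terms vanish because the elements of $S'$ are self-orthogonal and the off-diagonal terms vanish because $S$ is an orthogonal set. Taking $\bm{w}=\bm{v}$ shows $V$ lies in the isotropic cone, and taking $\bm{v}\neq\bm{w}$ nonzero shows $V\setminus\{\bm{0}\}$ is an orthogonal set. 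The second property is immediate: $S'\subset V$ by construction, $T\subset T$, and $V\cap T=\emptyset$ because every element of $V$ is isotropic while no element of $T$ is.

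For the dimension bound, set $U=\langle T\rangle$. The Gram matrix of $T$ is diagonal with non-zero diagonal entries (by the orthogonality of $S$ and the definition of $T$), so $T$ is linearly independent and $\B|_U$ is non-degenerate. Hence $\dim U=|T|$ and $\F_q^n=U\oplus U^\perp$, with $\B|_{U^\perp}$ non-degenerate of dimension $n-|T|$. Every element of $S'$ is orthogonal to every element of $T$, so $V\subset T^\perp=U^\perp$, and $V$ is a totally isotropic subspace of the non-degenerate space $(U^\perp,\B|_{U^\perp})$. The halving bound then gives $\dim V\leq (n-|T|)/2$, which rearranges to $2\dim V+|T|\leq n$.

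The main (mild) obstacle is ensuring that $\B|_{U^\perp}$ is non-degenerate so that the halving bound for totally isotropic subspaces applies; this reduces to showing that $\B|_U$ is non-degenerate, which is a direct consequence of the orthogonality of $S$ together with the anisotropy of the vectors of $T$. Everything else is routine linear algebra.
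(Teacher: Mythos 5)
Your proof is correct. Note that the paper does not actually prove this lemma: it imports it from \cite[Lemma~3]{AhmMoh} (citing \cite{Ber} for $q=2$), so there is no in-paper argument to compare against. Your route---take $V=\langle S'\rangle$ where $S'$ is the set of self-orthogonal elements of $S$, check that $V$ is totally isotropic, split off $U=\langle T\rangle$ on which $\B$ is non-degenerate (diagonal Gram matrix with non-zero entries), and apply the halving bound for totally isotropic subspaces inside the non-degenerate complement $U^{\perp}$---is the standard argument, and it is valid in every characteristic, including $q$ even, since the halving bound only uses $V\subset V^{\perp}$ together with $\dim V^{\perp}=\dim U^{\perp}-\dim V$ for the non-degenerate restriction $\B\rst{U^{\perp}}$.
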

    
Next, we recall \cite[Theorem 4]{AhmMoh}, which relying mainly on Lemma~\ref{claim:Struct}, obtains the following sharp bound on orthogonal sets. We note that in \cite{AhmMoh} $\mathcal{S}_{2,2}(q,n, \B)$ is denoted by $\mathcal{S}_0(q,n)$.
\begin{lemma}
\label{lem:OSUB}
For $n\geq 2$ and a prime power $q\geq 3$, let $\mathcal{B}$ denote a non-degenerate symmetric bilinear form over $\F_q^n$. Then
$$
\S_{2,2}(q,n,\B) = \begin{cases} q^{\frac{n-1}{2}}, &\mbox{if } n \ \text{is odd}; \\
q^{\frac{n}{2}}-1, &\mbox{if } n \ \text{is even and}\ \varepsilon(\mathcal{B}) = 1;\\ q^{\frac{n}{2}-1}+ 1, &\mbox{if } n \ \text{is even and}\ \varepsilon(\mathcal{B}) = \gamma .\end{cases}
$$
\end{lemma}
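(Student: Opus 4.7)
The plan is to establish matching lower and upper bounds by exploiting the standard-form representations~\eqref{eqn:1stBF} and~\eqref{eqn:2ndBF} together with the structural decomposition of Lemma~\ref{claim:Struct}. Without loss of generality, $\B$ takes the stated standard form.

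For the lower bound I would exhibit explicit orthogonal sets. Writing $k = \lfloor n/2\rfloor$, the ``paired-coordinate'' subspace
\[
V_j = \{(a_1, a_1, a_2, a_2, \dots, a_j, a_j, 0, \dots, 0): a_i \in \F_q\}
\]
is totally isotropic of dimension $j$, since each hyperbolic block $x_{2i-1}y_{2i-1} - x_{2i}y_{2i}$ vanishes on it. For odd $n$, take $j = k$; then $\bm{e}_n$ is orthogonal to every element of $V_k$, producing an orthogonal set of size $q^{(n-1)/2}$. For even $n$ with $\varepsilon(\B)=1$, take $j = k$ and obtain $q^{n/2}-1$ non-zero vectors. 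For even $n$ with $\varepsilon(\B)=\gamma$, take $j = k-1$; the vectors $\bm{e}_{n-1}$ and $\bm{e}_n$ are then orthogonal to $V_{k-1}$ and to each other (the associated matrix is diagonal), so they can be added to yield $q^{n/2-1}+1$.

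For the upper bound, applying Lemma~\ref{claim:Struct} to an orthogonal set $S$ produces a decomposition $S \subset V \sqcup T$ in which $V$ is a totally isotropic subspace, $T$ consists of non-self-orthogonal vectors, and $2\dim V + |T| \leq n$. Writing $d = \dim V$, this gives $|S| \leq (q^d - 1) + (n - 2d)$, which is monotonically increasing in $d$ for $q \geq 3$, so it suffices to bound the Witt index of $\B$. Since $V \subset V^\perp$ and non-degeneracy yields $\dim V^\perp = n - d$, one has $d \leq n/2$ in general. In the odd-$n$ case this forces $d \leq (n-1)/2$ and $|T| \leq 1$, giving $|S| \leq q^{(n-1)/2}$; in the even-$n$ hyperbolic case ($\varepsilon(\B)=1$) the bound $d \leq n/2$ with $|T| = 0$ already yields $|S| \leq q^{n/2}-1$.

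The main technical step is to show $d \leq n/2 - 1$ in the remaining case, even $n$ with $\varepsilon(\B)=\gamma$. My approach is to project any $\bm{v} \in V$ onto its last two coordinates: the restriction of $\B$ to those coordinates is $x_{n-1}y_{n-1} - \gamma x_n y_n$, and the associated quadratic form $X^2 - \gamma Y^2$ is anisotropic because $\gamma$ is a non-square. As every element of $V$ is self-orthogonal, the projection must vanish, forcing $V \subset \F_q^{n-2}\times\{\bm{0}\}$ and hence $d \leq n/2 - 1$. Combined with $|T|\leq 2$ this gives $|S| \leq q^{n/2-1} + 1$, matching the construction. The anisotropy of $X^2 - \gamma Y^2$ is the only non-routine ingredient; everything else is bookkeeping via Lemma~\ref{claim:Struct}.
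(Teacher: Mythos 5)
Your overall scheme—explicit ``paired-coordinate'' totally isotropic constructions for the lower bound, and Lemma~\ref{claim:Struct} plus a bound on the dimension of a totally isotropic subspace for the upper bound—is exactly the route of the source the paper cites for this lemma, and most of it checks out: the three constructions are correct, the reduction $|S|\le (q^d-1)+(n-2d)$ with $d=\dim V$ and its monotonicity in $d$ are fine, and the odd-$n$ and even-$n$, $\varepsilon(\B)=1$ cases follow. (Note also that reducing to the standard forms \eqref{eqn:1stBF}, \eqref{eqn:2ndBF} tacitly restricts to odd $q$, as the paper itself does.) However, the step you identify as the main technical one is not valid as argued. Self-orthogonality of $\bm{v}\in V$ under the full form \eqref{eqn:2ndBF} says only that $v_1^2-v_2^2+\cdots-v_{n-2}^2+v_{n-1}^2-\gamma v_n^2=0$; it does not force the last block $v_{n-1}^2-\gamma v_n^2$ to vanish, since the first $n-2$ coordinates can contribute a cancelling non-zero amount. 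Concretely, for $n=4$, $q=3$, $\gamma=-1$, the standard form is $x_1y_1-x_2y_2+x_3y_3+x_4y_4$ and the vector $(1,0,1,1)$ is self-orthogonal and spans a totally isotropic line whose projection to the last two coordinates is $(1,1)\neq(0,0)$; so the claimed conclusion $V\subset\F_q^{n-2}\times\{\bm{0}\}$ is false for perfectly legitimate subspaces $V$ produced by Lemma~\ref{claim:Struct}. Anisotropy of $X^2-\gamma Y^2$ applied coordinatewise therefore does not yield the bound $d\le n/2-1$.

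The fact you need—that a non-degenerate form in even dimension with $\varepsilon(\B)=\gamma$ has no totally isotropic subspace of dimension $n/2$—is true, but requires a genuine argument or a citation (the paper itself quotes \cite{Vinh} for the maximal dimension $d_n$ of an orthogonal subspace). Two standard repairs: (a) if $\dim V=n/2$ then $V=V^\perp$, and completing a basis of $V$ exhibits $\B$ as an orthogonal sum of $n/2$ hyperbolic planes, whose determinant is $(-1)^{n/2}$ times a square, contradicting the determinant class recorded by $\varepsilon(\B)=\gamma$; or (b) induct on $n$ by splitting off a hyperbolic plane: take $\bm{0}\neq\bm{v}\in V$ and $\bm{w}$ with $\B(\bm{v},\bm{w})\neq 0$ (so $\bm{w}\notin V$), observe that $V\cap\{\bm{v},\bm{w}\}^\perp$ is totally isotropic of dimension $n/2-1$, and use Lemma~\ref{lem: induction}~\eqref{item:lem:induction:ii} (with the self-orthogonal vector $\bm{v}$ in the role of $\bm{w}$ there) to see that the restriction to $\{\bm{v},\bm{w}\}^\perp$ is non-degenerate with the same $\varepsilon$; descending to $n=2$ one reaches a nonzero isotropic vector of $x_1y_1-\gamma x_2y_2$, and this is precisely where your anisotropy observation belongs, as the base case. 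With that step repaired, the rest of your bookkeeping gives the lemma.
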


An analogue of Lemma~\ref{lem:OSUB}, for $q= 2$, was earlier proved by Berlekamp~\cite{Ber}. Also see \cite{Grav}.

\begin{lemma}
\label{lem:Berl}
\[
    \S_{2,2}(2,n,\cdot) =  \begin{cases}
    n,  &\mbox{if } n\leq 5;\\
    1+2^{\frac{n-1}{2}},  &\mbox{if } n\ \text{is odd and}\ n\geq 7;\\
    2^{\frac{n}{2}},  &\mbox{if } n\ \text{is even and}\ n\geq 6.
    \end{cases}
\]
\end{lemma}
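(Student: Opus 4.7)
The plan is to follow Berlekamp's approach \cite{Ber}, whose skeleton parallels that of Lemma~\ref{claim:Struct} and Lemma~\ref{lem:OSUB}. Given an orthogonal set $S \subset \F_2^n \setminus \{\bm{0}\}$ with respect to the dot product, I would partition $S = S_0 \sqcup S_1$ by self-orthogonality, with $S_0$ consisting of the self-orthogonal (equivalently, even-weight) vectors and $S_1$ of the odd-weight vectors. Since $\mathrm{char}(\F_2) = 2$, whenever $\bm{u}, \bm{v}$ are self-orthogonal and mutually orthogonal, so is $\bm{u} + \bm{v}$. Hence $V_0 := \langle S_0 \rangle$ is a totally isotropic subspace. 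Non-degeneracy of the dot product forces $\dim V_0 \leq \lfloor n/2 \rfloor$, giving $|S_0| \leq 2^{\dim V_0} - 1$.

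To bound $|S_1|$, I would pass to the quotient $V_0^\perp / V_0$, on which $\B$ descends to a non-degenerate symmetric form of dimension $n - 2 \dim V_0$. Every element of $S_1$ lies in $V_0^\perp$ by orthogonality to $S_0$, and the key step is to show that distinct $\bm{u}, \bm{v} \in S_1$ project to distinct cosets: if $\bm{u} - \bm{v} \in V_0$ then $\B(\bm{u}, \bm{v}) = \B(\bm{u}, \bm{u}) = 1$, contradicting the orthogonality of $\bm{u}$ and $\bm{v}$ in $S$. The images of $S_1$ are therefore pairwise orthogonal and individually non-self-orthogonal, so their Gram matrix is the identity and they are linearly independent in a space of dimension $n - 2\dim V_0$. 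This yields $|S_1| \leq n - 2\dim V_0$, which is essentially the argument behind Lemma~\ref{claim:Struct}.

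Combining, $|S| \leq f(d) := 2^d - 1 + (n - 2d)$ for $d = \dim V_0 \in \{0, 1, \dots, \lfloor n/2 \rfloor\}$. The function $f$ is convex, so its maximum is attained at an endpoint. The endpoint $d = 0$ gives $|S| \leq n$, which dominates for $n \leq 5$, while the endpoint $d = \lfloor n/2 \rfloor$ gives the exponential bound $2^{\lfloor n/2 \rfloor}$ (with an extra $+1$ for odd $n$ arising from the fact that the quotient is then one-dimensional and can accommodate a single non-self-orthogonal vector), which dominates for $n \geq 6$. Once the upper bound is in hand, I would produce matching lower bounds by explicit orthogonal sets: the standard basis $\{\bm{e}_1, \dots, \bm{e}_n\}$ for $n \leq 5$; the non-zero elements of $\langle \bm{e}_{2i-1} + \bm{e}_{2i} : 1 \leq i \leq n/2 \rangle$ for even $n$; and the same construction augmented by $\bm{e}_n$ for odd $n$, which is orthogonal to the subspace while failing to be self-orthogonal.

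The main obstacle is bookkeeping the exact additive constants in the three cases, and in particular confirming the behaviour at the boundary between the linear regime ($n \leq 5$) and the exponential regime ($n \geq 6$). A secondary, smaller point is verifying that a totally isotropic subspace of dimension $\lfloor n/2 \rfloor$ actually exists for the dot product on $\F_2^n$ in each parity of $n$; this is checked by the explicit construction used in the lower bound.
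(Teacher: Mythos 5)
Your structural skeleton is fine: splitting $S$ into the self-orthogonal part $S_0$ and the odd-weight part $S_1$, observing that $V_0=\langle S_0\rangle$ is totally isotropic of dimension $d\le\lfloor n/2\rfloor$, and passing to $V_0^\perp/V_0$ to see that the images of $S_1$ are distinct, pairwise orthogonal, non-isotropic and hence linearly independent is exactly the mechanism behind Lemma~\ref{claim:Struct}, and it correctly yields $|S|\le f(d)=2^d-1+(n-2d)$. The gap is in the final bookkeeping, and it is not cosmetic: at the endpoint $d=\lfloor n/2\rfloor$ your bound equals $2^{n/2}-1$ for even $n$ and $2^{(n-1)/2}$ for odd $n$, not $2^{n/2}$ and $2^{(n-1)/2}+1$ as you assert --- you have silently dropped the $-1$ coming from excluding $\bm{0}$ from $V_0$. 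Consistently, the extremal examples you propose have sizes $2^{n/2}-1$ (even $n$) and $2^{(n-1)/2}$ (odd $n$: the isotropic span augmented by $\bm{e}_n$), which fall short of the right-hand sides $2^{n/2}$ and $1+2^{(n-1)/2}$ displayed in the lemma for $n\ge 6$. So your upper and lower bounds never meet the stated values and the claimed equalities are not established.

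Moreover, this cannot be repaired by tightening your argument: your (correct) upper bound shows that under Definition~\ref{def:OrthogonalSets}, where orthogonal sets live in $\F_2^n\setminus\{\bm{0}\}$, the maximum is in fact $2^{n/2}-1$ for even $n\ge 6$ and $2^{(n-1)/2}$ for odd $n\ge 7$ (one can confirm this for $n=6,7$ by peeling off a non-self-orthogonal vector $\bm{v}$ and inducting inside $\{\bm{v}\}^\perp$). The values in the displayed lemma are Berlekamp's counts for families of subsets of an $n$-set with pairwise even intersections, in which the empty set --- i.e.\ the zero vector --- is permitted; allowing $\bm{0}$ adds exactly one to each of the $n\ge 6$ entries (and would likewise turn the $n\le 5$ entry into $n+1$). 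Note also that the paper gives no proof of this lemma (it is cited from Berlekamp and Graver), so there is no internal argument to compare against; if you want a self-contained proof you must first fix the normalisation (with or without $\bm{0}$) and adjust either the constants or the admissible sets accordingly, after which your upper bound and constructions do close up. For the way the lemma is actually used later in the paper --- only as an upper bound on orthogonal sets --- your smaller values are of course still sufficient.
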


The following corollary is central to our considerations.

\begin{lemma}
\label{claim:NsOrth}
For $n\geq 2$ and a prime power $q$, let $S\subset\F_q^n$ be a $(3, 2)$-orthogonal set with respect to a non-degenerate symmetric bilinear form $\B$ over $\F_q^n$. For $\bm{s}\in S$, define
   \begin{equation}
       \label{eqn:Nsdef}
  S_{\bm{s}} = \{\bm{x}\in S\setminus\{\bm{s}\}:\ \mathcal{B}(\bm{x}, \bm{s})\not= 0\}.
   \end{equation}
If $|S_{\bm{s}}| \geq 2$, then $S_{\bm{s}}$ is an orthogonal set. In particular, we may write for all $\bm{s}\in S$
\begin{equation*}
    S_{\bm{s}} = R_{\bm{s}} \sqcup T_{\bm{s}},
\end{equation*}
where $T_{\bm{s}} = \{\bm{x}\in S_{\bm{s}}:\mathcal{B}(\bm{x}, \bm{x})\not=0\}$ and $\langle R_{\bm{s}}\rangle = V_{\bm{s}}$, an orthogonal subspace of $\F_q^n$ that contains only self-orthogonal vectors.
\end{lemma}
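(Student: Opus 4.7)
The plan is to first verify that $S_{\bm{s}}$ is an orthogonal set whenever $|S_{\bm{s}}|\geq 2$, and then decompose it via Lemma~\ref{claim:Struct}. For the first step, I would pick any two distinct vectors $\bm{x}, \bm{y} \in S_{\bm{s}}$. Since $\bm{x},\bm{y}\in S\setminus\{\bm{s}\}$ and $\bm{x}\neq\bm{y}$, the triple $\{\bm{s},\bm{x},\bm{y}\}$ consists of three distinct elements of $S$. The $(3,2)$-orthogonality of $S$ forces at least one pair among them to be mutually orthogonal with respect to $\B$. By the definition of $S_{\bm{s}}$ we have $\B(\bm{s},\bm{x})\neq 0$ and $\B(\bm{s},\bm{y})\neq 0$, so the orthogonal pair must be $\{\bm{x},\bm{y}\}$; that is, $\B(\bm{x},\bm{y})=0$. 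Since $\bm{x},\bm{y}$ were arbitrary, every pair of distinct vectors in $S_{\bm{s}}$ is mutually orthogonal, and so $S_{\bm{s}}$ is an orthogonal set by Definition~\ref{def:OrthogonalSets}.

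For the decomposition, I would apply Lemma~\ref{claim:Struct} to the orthogonal set $S_{\bm{s}}$ (noting that if $|S_{\bm{s}}|\leq 1$ there is nothing to prove, so the hypothesis $n\geq 2$ needed to invoke the lemma is harmless). This yields a decomposition $S_{\bm{s}} \subset W \sqcup T_{\bm{s}}$, where $W$ is an orthogonal subspace of self-orthogonal vectors and $T_{\bm{s}}=\{\bm{x}\in S_{\bm{s}}:\B(\bm{x},\bm{x})\neq 0\}$, which is precisely the definition used in the statement. Setting $R_{\bm{s}} := S_{\bm{s}}\setminus T_{\bm{s}} = \{\bm{x}\in S_{\bm{s}}:\B(\bm{x},\bm{x})=0\}$ gives $S_{\bm{s}}=R_{\bm{s}}\sqcup T_{\bm{s}}$ at once, and $R_{\bm{s}}\subset W$.

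It remains to verify that $V_{\bm{s}}:=\langle R_{\bm{s}}\rangle$ is itself an orthogonal subspace of self-orthogonal vectors (rather than merely being contained in the auxiliary $W$ provided by Lemma~\ref{claim:Struct}). This is a short linearity check: if $\bm{x}_1,\ldots,\bm{x}_k\in R_{\bm{s}}$ and $a_1,\ldots,a_k,b_1,\ldots,b_k\in\F_q$, then by bilinearity
\[
\B\Bigl(\sum_i a_i\bm{x}_i,\ \sum_j b_j\bm{x}_j\Bigr)=\sum_{i,j} a_i b_j\, \B(\bm{x}_i,\bm{x}_j),
\]
and every term vanishes: the off-diagonal terms because $R_{\bm{s}}\subset S_{\bm{s}}$ is orthogonal, and the diagonal terms because each $\bm{x}_i\in R_{\bm{s}}$ is self-orthogonal by construction. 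Hence every pair of vectors in $V_{\bm{s}}$ is mutually orthogonal and every vector in $V_{\bm{s}}$ is self-orthogonal, so $V_{\bm{s}}\subset\{\bm{x}\in\F_q^n:\B(\bm{x},\bm{x})=0\}$ is indeed an orthogonal subspace, as claimed.

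There is no real obstacle here; the argument is essentially bookkeeping. The only point requiring care is ensuring that the $(3,2)$-condition applies to the triple $\{\bm{s},\bm{x},\bm{y}\}$, which is why the hypothesis $|S_{\bm{s}}|\geq 2$ is needed (so that such a triple of distinct vectors exists); and then recording that the orthogonal subspace we produce has as its nonzero elements precisely self-orthogonal vectors, which follows from the explicit description of $T_{\bm{s}}$ in Lemma~\ref{claim:Struct}.
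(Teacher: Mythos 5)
Your proof is correct and follows the same route as the paper: the triple $\{\bm{s},\bm{x},\bm{y}\}$ argument for orthogonality of $S_{\bm{s}}$ is exactly the paper's, and the decomposition via Lemma~\ref{claim:Struct} is also how the paper concludes (your bilinearity check that $\langle R_{\bm{s}}\rangle$ is an orthogonal subspace of self-orthogonal vectors simply spells out a detail the paper leaves implicit).
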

\begin{proof}
Given two distinct vectors $\bm{x_1}, \bm{x_2}\in S_{\bm{s}}$, by the $(3,2)$-orthogonality of $S$, two of $\{\bm{x_1}, \bm{x_2}, \bm{s}\}$ must be mutually orthogonal. Thus, given the definition of $S_{\bm{s}}$, we must have $\mathcal{B}(\bm{x_1}, \bm{x_2}) = 0$. The rest follows from Lemma~\ref{claim:Struct} or is immediate when $|S_{\bm{s}}| \leq 1$.
\end{proof}

We also collect some basic facts about orthogonal subspaces as follows.
\begin{lemma} 
\label{lem: basic}
Let $V\subset \F_q^n$ denote an orthogonal subspace with at least three elements.
\begin{enumerate}[(i)]
\item\label{item:lem: basic:i} Every vector in $V$ is self-orthogonal. 
\item\label{item:lem: basic:ii} Suppose that $V$ is of maximum dimension and $V = \langle R\rangle$ for some $R\subset \F_q^n$. If $\bm{z} \notin V$ is a self-orthogonal vector, then $\bm{z}$ is not orthogonal to $R$.
\end{enumerate}
\end{lemma}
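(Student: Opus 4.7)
The plan is to prove (i) by a short computation exploiting that, for any nonzero $u\in V$, one can find a second element of $V$ whose orthogonality with $u$ forces $\B(u,u)=0$, and to prove (ii) by using (i) to extend $V$ to a strictly larger orthogonal subspace, contradicting maximality.

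For (i), I would fix nonzero $u\in V$ and argue by cases on the characteristic. If $q$ is odd, the vectors $u$ and $2u$ are distinct nonzero elements of $V$, so the orthogonality of $V$ gives $0=\B(u,2u)=2\B(u,u)$, whence $\B(u,u)=0$. If $q=2$, the hypothesis $|V|\geq 3$ forces $\dim V\geq 2$, so I can pick $v\in V$ with $\{u,v\}$ linearly independent; then $u$ and $u+v$ are distinct nonzero elements of $V$, and $0=\B(u,u+v)=\B(u,u)+\B(u,v)=\B(u,u)$.

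For (ii), I would proceed by contradiction. Assume $\B(z,r)=0$ for every $r\in R$. Bilinearity then yields $\B(z,v)=0$ for every $v\in V=\langle R\rangle$. Set $V'=V+\langle z\rangle$; since $z\notin V$, we have $\dim V'=\dim V+1$. Given any $u_i=v_i+a_iz\in V'$ for $i=1,2$, I would expand
\[
\B(u_1,u_2)=\B(v_1,v_2)+a_2\B(v_1,z)+a_1\B(z,v_2)+a_1a_2\B(z,z).
\]
The last three terms vanish because $z$ is orthogonal to $V$ and to itself; the first vanishes by part (i) when $v_1=v_2$, by orthogonality of $V$ when $v_1\ne v_2$ are both nonzero, and trivially when one of them is $\bm{0}$. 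Thus $V'$ is an orthogonal subspace of dimension $\dim V+1$, contradicting the maximality of $V$.

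The main subtlety is the characteristic-$2$ case in (i), which is precisely why the lemma requires $|V|\geq 3$: over $\F_2$ a single nonzero vector can satisfy $\B(u,u)\ne 0$, but as soon as the ambient orthogonal subspace contains at least three elements, its subspace structure forces self-orthogonality, after which part (ii) is essentially a bookkeeping exercise.
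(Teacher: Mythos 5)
Your proof of (ii) is exactly the paper's argument (assume $\bm{z}\perp R$, hence $\bm{z}\perp V$, expand $\B(\bm{v}_1+a_1\bm{z},\bm{v}_2+a_2\bm{z})$ using part (i), and contradict maximality of $\dim V$), and your proof of (i) uses the same bilinearity trick; the only issue is that your case split in (i) does not exhaust the lemma's scope. The lemma is stated for an arbitrary prime power $q$, so $q$ may be even and larger than $2$ (e.g.\ $q=4,8$): there the odd-characteristic argument fails because $2\bm{u}=\bm{0}$, and the $q=2$ argument is also unavailable because $|V|\geq 3$ no longer forces $\dim V\geq 2$ (a one-dimensional subspace already has $q\geq 4$ elements, so no linearly independent $\bm{v}$ need exist). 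The hole closes immediately: for any $q\geq 3$ pick $\lambda\in\F_q\setminus\{0,1\}$ and use $0=\B(\bm{u},\lambda\bm{u})=\lambda\,\B(\bm{u},\bm{u})$, reserving your independent-vector argument for $q=2$ alone. The paper sidesteps the case split entirely by pairing $\bm{u}$ with $\bm{u}+\bm{y}$ for any third element $\bm{y}\in V$, which works uniformly in every characteristic; otherwise your argument and the paper's coincide.
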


\begin{proof}
For the first statement, let $\bm{x}$ be any element of $V$ and $\bm{y}$ some other element of $V$. It follows that $\bm{x} + \bm{y} \in V \setminus\{\bm{x}\}$ and so 
$$
0 = \B(\bm{x}, \bm{x} + \bm{y}) =  \B(\bm{x}, \bm{x}) +  \B(\bm{x}, \bm{y}) = \B(\bm{x}, \bm{x}).
$$

For the second statement, we have $\B(\bm{z}, \bm{z}) = 0$. Suppose for a contradiction that $\bm{z} \perp R$. Then $\bm{z} \perp V$. Note that by the first part, for all $\lambda, \mu \in \F_q$ and $\bm{x}, \bm{y} \in V$ we have
\[
\B(\lambda \bm{z} + \bm{x}, \mu \bm{z}+ \bm{y})= \lambda \mu \B(\bm{z}, \bm{z}) +  \lambda \B(\bm{z}, \bm{y}) + \mu \B(\bm{z}, \bm{x}) + \B(\bm{x}, \bm{y})  = 0.
\]
Hence $\langle \{\bm{z}\} \cup R \rangle$ is an orthogonal subspace that strictly contains $V$, a contradiction.
\end{proof}

The next result forms the basis of the induction argument in the proof of Theorem~\ref{thm:32OS odd}. The key is to show that if we restrict $\B$ to a certain type of orthogonal complement, then it remains non-degenerate; and that, under a further condition, the equivalence class of $\B$ is conserved.   

\begin{lemma} 
\label{lem: induction}
Let $n \geq 2$, $\mathcal{B}$ be a non-degenerate symmetric bilinear form over $\F_q^n$, and $\{ \bm{v}, \bm{w}\} \subset \F_q^n$ be linearly independent. 
\begin{enumerate}[(i)]
\item\label{item:lem:induction:i} If
\[
\B(\bm{v}, \bm{w})^2 \neq \B(\bm{v}, \bm{v}) \B(\bm{w}, \bm{w}),
\]
then the restriction $\mathcal{B}\rst{\{\bm{v},\bm{w}\}^\perp}$ of $\mathcal{B}$ to the orthogonal complement of $\{\bm{v},\bm{w}\}$, is a non-degenerate symmetric bilinear form. 
\item\label{item:lem:induction:ii} If $q$ is odd, $\bm{v}$ and $\bm{w}$ are not mutually orthogonal, and $\bm{w}$ is self-orthogonal (that is $\B(\bm{v}, \bm{w}) \neq0$ and $\B(\bm{w}, \bm{w})=0$), then $\varepsilon(\mathcal{B} \rst{\{\bm{v},\bm{w}\}^\perp}) = \varepsilon(\mathcal{B})$.
\item\label{item:lem:induction:iii} If $q=2$, $n$ is even, $\bm{v}$ and $\bm{w}$ are not mutually orthogonal, and both $\bm{v},\bm{w}$ are self-orthogonal (that is $\B(\bm{v}, \bm{w}) \neq0$ and $\B(\bm{v}, \bm{v})=\B(\bm{w}, \bm{w})=0$), then $\B$ is equivalent to $\Hy$ if and only if $\mathcal{B} \rst{\{\bm{v},\bm{w}\}^\perp}$ is equivalent to $\Hy$.
\end{enumerate}
\end{lemma}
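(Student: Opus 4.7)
The plan is to establish the three statements in the order given, using \eqref{item:lem:induction:i} as the foundation for \eqref{item:lem:induction:ii} and \eqref{item:lem:induction:iii}. Setting $W = \langle \bm{v}, \bm{w}\rangle$, the $2\times 2$ Gram matrix of $\B$ on $W$ in the basis $\{\bm{v},\bm{w}\}$ has determinant $\B(\bm{v},\bm{v})\B(\bm{w},\bm{w}) - \B(\bm{v},\bm{w})^2$, so the hypothesis of \eqref{item:lem:induction:i} says precisely that $\B\rst{W}$ is non-degenerate. I would then invoke the standard orthogonal decomposition $\F_q^n = W \oplus W^\perp$: for any $\bm{x}\in \F_q^n$, non-degeneracy of $\B\rst{W}$ produces a unique $\bm{w}_{\bm{x}}\in W$ with $\bm{x} - \bm{w}_{\bm{x}} \in W^\perp$, and $W \cap W^\perp = \{\bm{0}\}$ is immediate; non-degeneracy of $\B$ on the ambient space combined with non-degeneracy of $\B\rst{W}$ then forces $\B\rst{W^\perp}$ to be non-degenerate as well.

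For \eqref{item:lem:induction:ii}, the hypotheses give $G = \bigl(\begin{smallmatrix} \B(\bm{v},\bm{v}) & a \\ a & 0 \end{smallmatrix}\bigr)$ with $a := \B(\bm{v},\bm{w}) \neq 0$, so $\det G = -a^2 \neq 0$ and \eqref{item:lem:induction:i} applies. In a basis adapted to $\F_q^n = W \oplus W^\perp$ the Gram matrix of $\B$ becomes block-diagonal, so $\det(A)$ equals $-a^2 \cdot \det(A')$ up to a nonzero square (arising from the change of basis), where $A'$ is the Gram matrix of $\B\rst{W^\perp}$. Hence $\det(A')$ agrees with $-\det(A)$ modulo squares. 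Since $k' := \lfloor (n-2)/2\rfloor = k-1$ has opposite parity to $k$, the definition of $\varepsilon$ swaps which of $\det$ or $-\det$ one inspects, and a case check across the four branches of the definition should show that the sign flip from $-a^2$ cancels exactly against this parity swap, yielding $\varepsilon(\B\rst{W^\perp}) = \varepsilon(\B)$ in every case.

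For \eqref{item:lem:induction:iii}, the hypotheses force $G = H$, so $\det G = 1$ and \eqref{item:lem:induction:i} again applies, yielding a basis in which the Gram matrix of $\B$ is $H \oplus A'$, with $A'$ the non-degenerate Gram matrix of $\B\rst{W^\perp}$. I would then invoke the characteristic-two classification recalled in the introduction: a non-degenerate symmetric bilinear form on an even-dimensional $\F_2$-space is equivalent to $\Hy$ if and only if it is \emph{alternating}, that is, every vector is self-orthogonal. Since $H$ itself is alternating, every vector of $W$ is automatically self-orthogonal, so $\B$ is alternating on $\F_2^n$ if and only if $\B\rst{W^\perp}$ is alternating, which gives the desired biconditional.

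The main obstacle I anticipate is the bookkeeping in \eqref{item:lem:induction:ii}: the parity switch $k \mapsto k-1$ in the four-branch definition of $\varepsilon$ and the sign flip from the $-a^2$ factor must be shown to cancel in each case, including the two $\gamma$-branches. The remaining parts should be routine once the orthogonal decomposition of \eqref{item:lem:induction:i} is in hand, and the characteristic-two classification invoked in \eqref{item:lem:induction:iii} is standard and available from the references already cited.
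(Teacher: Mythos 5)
Your proposal is correct, and part \eqref{item:lem:induction:i} is exactly the paper's argument: show $\B\rst{\langle\bm{v},\bm{w}\rangle}$ is non-degenerate (equivalently $\langle\bm{v},\bm{w}\rangle\cap\{\bm{v},\bm{w}\}^\perp=\{\bm{0}\}$), split $\F_q^n=\langle\bm{v},\bm{w}\rangle\oplus\{\bm{v},\bm{w}\}^\perp$, and read off non-degeneracy of the second block from the block-diagonal Gram matrix. Where you diverge is in how you finish \eqref{item:lem:induction:ii} and \eqref{item:lem:induction:iii}. For \eqref{item:lem:induction:ii} the paper exhibits an explicit $2\times 2$ change of basis (solving $\alpha^2-\gamma^2=a$, $\beta(\alpha-\gamma)=b$) showing the block $\begin{pmatrix} a & b\\ b & 0\end{pmatrix}$ is equivalent to $\mathrm{diag}(1,-1)$, so that $\B$ is equivalent to $\mathrm{diag}(1,-1)\oplus\B\rst{\{\bm{v},\bm{w}\}^\perp}$ and the recursive shape of the standard forms \eqref{eqn:1stBF}--\eqref{eqn:2ndBF} preserves $\varepsilon$; you instead work with determinants modulo squares, noting $\det A\equiv-\det A'$ and that the parity swap $k\mapsto k-1$ in the definition of $\varepsilon$ compensates the sign. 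Your four-branch check does close as you expect (in each branch one inspects $\det$ versus $-\det$ on opposite sides, and the minus signs cancel), so the hedged ``should show'' is genuinely routine; the determinant route is arguably cleaner since it avoids constructing the basis change, while the paper's route gives the explicit equivalence, which is what makes the preservation of $\varepsilon$ transparent without well-definedness bookkeeping. For \eqref{item:lem:induction:iii} the paper simply observes the block is $H$ and asserts the conclusion; your proof makes explicit the fact that justifies the ``only if'' direction, namely that over $\F_2$ in even dimension a non-degenerate symmetric form is equivalent to $\Hy$ exactly when it is alternating (which follows from the classification into dot product versus $\Hy$ recalled in the introduction, since the dot product is not alternating), combined with the char-2 additivity of $\bm{x}\mapsto\B(\bm{x},\bm{x})$ across the orthogonal splitting. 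Both treatments are valid; yours is slightly more self-contained at that point.
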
  

\begin{proof}
Throughout the proof we write
\[
a = \B(\bm{v},\bm{v}) , b = \B(\bm{v},\bm{w}), c = \B(\bm{w},\bm{w}).
\]

For \eqref{item:lem:induction:i}, we first show $\langle \bm{v} ,\bm{w} \rangle \cap \{\bm{v},\bm{w}\}^\perp = \{\bm{0}\}$ and therefore that $\F_q^n = \langle \bm{v} ,\bm{w} \rangle \oplus \{\bm{v},\bm{w}\}^\perp$. Suppose $\lambda \bm{v} + \mu \bm{w} \in \{\bm{v},\bm{w}\}^\perp$. Applying $\B(\bm{v},-)$ and then $\B(\bm{w}, -)$ to both sides gives the linear system
\[
\left\{ \begin{matrix} \lambda a + \mu b = 0 \\ \lambda b + \mu c = 0 \end{matrix}\right..
\]
It follows that $\lambda=\mu=0$ because $ac \neq b^2$. 

We write $M_1$ for the matrix of $\mathcal{B}\rst{\langle \bm{v},\bm{w}\rangle}$ with respect to the basis $\{\bm{v},\bm{w}\}$, $M_2$ for the matrix of $\mathcal{B}\rst{\{\bm{v},\bm{w}\}^\perp}$ with respect to any basis, and $M$ for the matrix of $\mathcal{B}$ with respect to the union of these two bases, then
\[
M =  \begin{pmatrix} M_1 & 0 \\ 0 & M_2\end{pmatrix}.
\]
It follows immediately that if $\mathcal{B}$ is non-degenerate, then so is $\mathcal{B}\rst{\{\bm{v},\bm{w}\}^\perp}$.

For \eqref{item:lem:induction:ii}, we have $b \neq 0$ and $c=0$. We show
\[
M_1 =  \begin{pmatrix} a & b \\ b & 0\end{pmatrix} \sim \begin{pmatrix} 1 & 0 \\ 0 & -1\end{pmatrix},
\]
which proves $\varepsilon(\mathcal{B} \rst{\{\bm{v},\bm{w}\}^\perp}) = \varepsilon(\mathcal{B})$.

Let $\alpha, \beta, \gamma$ be solutions to $\alpha^2 - \gamma^2 = a$ and $\beta(\alpha - \gamma) =b$. Such $\alpha, \gamma$ exist because every element of $\F_q$ is the difference of two squares. The characteristic is not 2, so we can always take $\alpha \neq \gamma$ (even when $a =0$). Then there exists a suitable $\beta$. Now a simple calculation confirms
\[
\begin{pmatrix} \alpha & \beta \\ \gamma & \beta \end{pmatrix}^T \begin{pmatrix} 1 & 0 \\ 0 & -1 \end{pmatrix} \begin{pmatrix} \alpha & \beta \\ \gamma & \beta \end{pmatrix} = \begin{pmatrix} \alpha^2 - \gamma^2 & \beta(\alpha-\gamma) \\ \beta (\alpha-\gamma) & 0 \end{pmatrix}  = \begin{pmatrix} a & b \\ b & 0 \end{pmatrix};
\]
and $\det \begin{pmatrix} \alpha & \beta \\ \gamma & \beta \end{pmatrix} = \beta(\alpha- \gamma) = b \neq0$.

For \eqref{item:lem:induction:iii}, we have $b =1$ and $a=c=0$. Therefore $M_1 =  H$. So $\B$ is equivalent to $\Hy$ if and only if $\mathcal{B} \rst{\{\bm{v},\bm{w}\}^\perp}$ is equivalent to $\Hy$.
\end{proof}

\begin{remark}
\label{rem:1}
The condition in part \eqref{item:lem:induction:i} of Lemma~\ref{lem: induction} is necessary. Take, for example, $n=4$ and $\B$ the bilinear form given by the diagonal matrix with diagonal entries $(1,-1,1,-1)$. $\B$ is the dot product when $q=2$. Take $\bm{v} = (1,0,0,0)$ and $\bm{w} = (1,1,1,0)$. These are two linearly independent vectors with the numbers $a,b,c$ defined in the proof of the lemma all equal to 1. Hence $ac=b^2$. It is not true that $\langle \bm{v},\bm{w}\rangle$ trivially intersects $\{\bm{v},\bm{w}\}^\perp$ because the span of $\bm{w}-\bm{v} = (0,1,1,0)$ lies in both subspaces. Furthermore, $\B$ restricted to $\{\bm{v},\bm{w}\}^\perp$ is degenerate because $\bm{w}-\bm{v}$ is orthogonal to both $\bm{w}-\bm{v}$ and $(0,0,0,1)$, which span $\{\bm{v},\bm{w}\}^\perp$. 
\end{remark}

The next step is to bound the number of vectors in any $(3,2)$-orthogonal subset in $\F_q^n$ that are not self-orthogonal. It may be true that, analogously to the results of Rosenfeld and Deaett~\cite{Rose,Dea}, there are at most $2n$ such vectors. We prove a weaker result that suffices for our purposes. As part of the proof, we require a straightforward adaptation of~\cite[Proposition~4.4]{Dea}, which we state. The proof is nearly identical to that in \cite{Dea}.
\begin{lemma}
\label{lem:DeaP44}
Let $n \geq 1$ be a positive integer, $F$ be a field, and $S\subset F^n$ be a $(3, 2)$-orthogonal set with respect to a symmetric bilinear form. If $B\subset S$ is an orthogonal basis for $F^n$, then $S\setminus B$ is an orthogonal set.
\end{lemma}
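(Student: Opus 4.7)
The plan is to argue by contradiction, leveraging the fact that in the basis $B$ the bilinear form has a diagonal Gram matrix. Write $B=\{\bm{b}_1,\dots,\bm{b}_n\}$ and expand two distinct vectors $\bm{x},\bm{y}\in S\setminus B$ as $\bm{x}=\sum_{i=1}^n\alpha_i\bm{b}_i$ and $\bm{y}=\sum_{i=1}^n\beta_i\bm{b}_i$. The pairwise orthogonality of the basis vectors immediately yields
\[
\B(\bm{x},\bm{b}_i)=\alpha_i\B(\bm{b}_i,\bm{b}_i),\qquad \B(\bm{y},\bm{b}_i)=\beta_i\B(\bm{b}_i,\bm{b}_i),\qquad \B(\bm{x},\bm{y})=\sum_{i=1}^n\alpha_i\beta_i\B(\bm{b}_i,\bm{b}_i).
\]
These identities are the only ingredients required, and they are valid even if the form happens to be degenerate or some $\bm{b}_i$ is self-orthogonal.

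Next, I would assume toward a contradiction that $\B(\bm{x},\bm{y})\neq 0$ and apply $(3,2)$-orthogonality of $S$ to each triple $\{\bm{x},\bm{y},\bm{b}_i\}$. Since $\bm{x}\neq\bm{y}$ and neither vector lies in $B$, these are three distinct elements of $S$, so at least one pair among them must be mutually orthogonal; the pair $\{\bm{x},\bm{y}\}$ is excluded by assumption, which forces $\alpha_i\B(\bm{b}_i,\bm{b}_i)=0$ or $\beta_i\B(\bm{b}_i,\bm{b}_i)=0$ for every $i$. In either case, the $i$-th summand in the formula above vanishes, and summing over $i$ gives $\B(\bm{x},\bm{y})=0$, the desired contradiction.

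I do not anticipate any genuine obstacle here: the argument is a term-by-term application of the $(3,2)$-orthogonality hypothesis through the diagonal expansion, and it is essentially a direct transcription of Deaett's Proposition~4.4 in~\cite{Dea}. The only bookkeeping detail worth flagging is that $\{\bm{x},\bm{y},\bm{b}_i\}$ must genuinely be a set of three distinct elements of $S$ for the hypothesis to apply, which is guaranteed by the choices $\bm{x}\neq\bm{y}$ and $\bm{x},\bm{y}\in S\setminus B$; notably, no non-degeneracy assumption on $\B$ is needed.
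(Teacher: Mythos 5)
Your proof is correct and is essentially the argument the paper intends: the paper simply cites Deaett's Proposition~4.4 and notes the proof is nearly identical, and that proof is exactly your expansion of $\bm{x},\bm{y}\in S\setminus B$ in the orthogonal basis, using $(3,2)$-orthogonality on each triple $\{\bm{x},\bm{y},\bm{b}_i\}$ to kill every summand of $\B(\bm{x},\bm{y})$. Your observation that no non-degeneracy is needed and that self-orthogonal basis vectors cause no trouble is also accurate.
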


We state and prove another result that is implicit in~\cite[Section 4]{Dea}. It is convenient to phrase many of the subsequent arguments in terms of the simple graph $G$ with vertex set $S$ and edges given by pairs of elements of $S$ that are not mutually orthogonal ($\bm{xy}$ is an edge precisely when $\B(\bm{x},\bm{y}) \neq 0$).

\begin{lemma} 
\label{lem: D}
Let $n \geq 1$ be a positive integer, $F$ a field, and $D\subset F^n$ be a $(3, 2)$-orthogonal set with respect to a symmetric bilinear form. If $D$ consists entirely of vectors that are not self-orthogonal, then
\[
|D| \leq \max\{ 2n , R(3,n) -1 \}\overset{\eqref{eqn:Ramsey}} = \begin{cases} 2n, &\mbox{if } 0 \leq n \leq 4 ;\\
\tfrac{n(n+1)}{2}-1, &\mbox{if } n\geq 5 . \end{cases}
\]
\end{lemma}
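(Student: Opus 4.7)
The plan is to translate the $(3,2)$-condition into the language of the graph $G$ on vertex set $D$ where $\bm{xy}$ is an edge precisely when $\B(\bm{x},\bm{y}) \neq 0$. The condition that among any three vectors of $D$ at least one pair is orthogonal becomes exactly the statement that $G$ is triangle-free. The bound will then emerge from a dichotomy driven by Ramsey's theorem: either $|D| < R(3,n)$, in which case the desired inequality $|D|\le R(3,n)-1$ holds trivially, or $|D|\ge R(3,n)$ and one can extract a structured subset.

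In the second case, triangle-freeness of $G$ together with $|D| \geq R(3,n)$ forces the existence of an independent set of size $n$ in $G$; equivalently, $n$ pairwise mutually orthogonal vectors $\bm{v}_1,\dots,\bm{v}_n \in D$. Because every element of $D$ satisfies $\B(\bm{v}_i,\bm{v}_i) \neq 0$, these $n$ vectors are linearly independent: if $\sum c_i \bm{v}_i = \bm{0}$, applying $\B(\bm{v}_j,\cdot)$ kills all terms except the $j$th and yields $c_j \B(\bm{v}_j,\bm{v}_j)=0$, hence $c_j=0$. Thus $B := \{\bm{v}_1,\dots,\bm{v}_n\}$ is an orthogonal basis of $F^n$ contained in $D$.

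The setup is now exactly the hypothesis of Lemma~\ref{lem:DeaP44}, which tells us that $D\setminus B$ is an orthogonal set. Since the elements of $D\setminus B$ are also not self-orthogonal, the same linear independence argument as above gives $|D\setminus B|\le n$, so $|D| \leq |B| + |D \setminus B| \leq 2n$. Combining both cases yields $|D|\le \max\{2n, R(3,n)-1\}$.

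The numerical simplification is then a direct consequence of the bounds in \eqref{eqn:Ramsey}. For $n\le 4$ one checks using $R(3,3)=6$ and $R(3,4)=9$ (and the trivial values $R(3,1)=1$, $R(3,2)=3$) that $R(3,n)-1\le 2n$, so the maximum is $2n$. For $n\ge 5$ one uses $R(3,n)\le \binom{n+1}{2}$, which gives $R(3,n)-1\le \tfrac{n(n+1)}{2}-1$, and a quick comparison shows $\tfrac{n(n+1)}{2}-1\ge 2n$ already for $n\ge 4$, so the maximum is $\tfrac{n(n+1)}{2}-1$. I do not anticipate any serious obstacle: the only real content is the Ramsey extraction of an orthogonal basis and the invocation of Lemma~\ref{lem:DeaP44}; the bookkeeping at the end is routine.
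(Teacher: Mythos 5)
Your proposal is correct and follows essentially the same route as the paper: the triangle-free graph on $D$, the Ramsey dichotomy (independent set of size $n$ versus $|D|<R(3,n)$), linear independence of orthogonal sets of non-self-orthogonal vectors, and Lemma~\ref{lem:DeaP44} to bound $|D\setminus B|$ by $n$. The only difference is cosmetic ordering of the two cases, and your numerical bookkeeping at the end matches \eqref{eqn:Ramsey}.
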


\begin{proof}
We use the graph $G$ described just above the statement of the lemma. The claim is true for $n=0$. For $n \geq1$ we observe that an independent set of vertices is an orthogonal set in $F^n$ and so is linearly independent (we need here that all vectors in $D$ are not self-orthogonal). If $G$ has an independent set $B$ of size $n$, then that set is linearly independent and therefore is a basis for $F^n$. By Lemma~\ref{lem:DeaP44} we get that $D \setminus B$ is orthogonal and hence contains at most $n$ elements. Hence $|D| = |B| + |D\setminus B| \leq 2n$. If $G$, which is triangle-free, has no independent set of size $n$, then $|D| < R(3,n)$, by the definition of $R(3,n)$.
\end{proof}

Note that by work of Ajtai, Koml\'os and Szemer\'edi, and of Kim \cite{AKS,Kim}
\[
R(3,n) = (1+ o_{n \to \infty}(1)) \frac{n^2}{\log n},
\]
with stronger explicit upper bounds in \cite{She}. This means that $|D| = o(n^2)$. 

We also extract this consequence of Lemma~\ref{claim:Struct} and Lemma~\ref{lem:DeaP44} from the proof of \cite[Theorem~17]{AhmMoh}.

\begin{lemma} 
\label{lem: AM}
Let $S \subset \F_q^n$ be a $(3,2)$-orthogonal set with respect to a non-degenerate symmetric bilinear form $\B$. If every pair of linearly independent vectors in $S$ is mutually orthogonal (that is $\B(\bm{x},\bm{y}) =0$ for every linearly independent $\{\bm{v}, \bm{w}\} \subset S$), then $|S| \leq \S_{2,2}(q,n,\B) + n$.
\end{lemma}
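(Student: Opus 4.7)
The plan is to split $S$ into its self-orthogonal and non-self-orthogonal parts and exploit the hypothesis that pairs in $S$ that fail to be mutually orthogonal must be scalar multiples of each other. I would set $A = \{\bm{x} \in S : \B(\bm{x},\bm{x}) = 0\}$ and $B = S \setminus A$. The first observation is that $A$ is itself an orthogonal set: two distinct elements of $A$ are either proportional (in which case $\B$ vanishes because both are self-orthogonal and $\B$ is bilinear) or linearly independent (in which case $\B$ vanishes by hypothesis).

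Next, I would analyse the lines through the origin meeting $B$. If three distinct multiples $\mu_1 \bm{v}, \mu_2 \bm{v}, \mu_3 \bm{v}$ of some $\bm{v} \in B$ all lie in $B$, then every pair satisfies $\B(\mu_i \bm{v}, \mu_j \bm{v}) = \mu_i \mu_j \B(\bm{v},\bm{v}) \neq 0$, contradicting the $(3,2)$-orthogonality of $S$. Hence every such line meets $B$ in at most two elements, so selecting one representative from each line produces a set $T^* \subset B$ with $|B| \leq 2|T^*|$.

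By construction, $T^*$ consists of mutually orthogonal non-self-orthogonal vectors: distinct representatives live on distinct lines, hence are linearly independent and so orthogonal by hypothesis. A standard argument---applying $\B(\bm{v}_j,-)$ to any linear dependence and using $\B(\bm{v}_j,\bm{v}_j) \neq 0$---shows such vectors are linearly independent, giving $|T^*| \leq n$. Moreover, $A \cup T^*$ is orthogonal, since cross-pairs between $A$ and $T^*$ are linearly independent (as $T^*$ contains no self-orthogonal vector) and therefore orthogonal by hypothesis. As $A$ and $T^*$ are disjoint,
\[
|A| + |T^*| = |A \cup T^*| \leq \S_{2,2}(q,n,\B).
\]

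Combining everything yields
\[
|S| = |A| + |B| \leq |A| + 2|T^*| = (|A| + |T^*|) + |T^*| \leq \S_{2,2}(q,n,\B) + n,
\]
which is the desired bound. The proof is short and no serious obstacle arises; the key trick is the introduction of $T^*$, which converts the factor-of-two overhead in the estimate $|B| \leq 2|T^*|$ into an additive $|T^*| \leq n$ term rather than a multiplicative one.
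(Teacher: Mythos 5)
Your proof is correct. The decomposition into $A$ (self-orthogonal elements, which form an orthogonal set), the at-most-two-per-line bound on $B$, the linear independence of the line representatives $T^*$ (pairwise orthogonal and non-isotropic), and the absorption of $T^*$ into the orthogonal set $A\cup T^*$ all check out, and the final chain $|S|\le |A|+2|T^*|=(|A|+|T^*|)+|T^*|\le \S_{2,2}(q,n,\B)+n$ gives exactly the stated bound, with the $q=2$ and degenerate edge cases handled automatically.

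The paper does not write out a proof of this lemma: it presents it as a consequence of Lemma~\ref{claim:Struct} and Deaett's Lemma~\ref{lem:DeaP44}, extracted from the proof of Theorem~17 of Ahmadi--Mohammadian. Your route is self-contained and avoids both of those ingredients. In particular, Lemma~\ref{lem:DeaP44} would naturally enter when the representatives form an orthogonal \emph{basis} (then $S$ minus that basis is orthogonal), and Lemma~\ref{claim:Struct} would be used to control the non-self-orthogonal part of an orthogonal set; your trick of counting the representatives once inside the orthogonal set $A\cup T^*$ and once as the additive $+|T^*|\le n$ term achieves the same saving (turning the naive $+2n$ into $+n$) by elementary means, and arguably makes the lemma independent of Deaett's result. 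The only mild cost is that you implicitly reprove the standard facts (linear independence of mutually orthogonal anisotropic vectors, at most two anisotropic points of $S$ on a line) that the cited machinery packages, but that is a feature rather than a flaw here.
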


As the final result of this section, we recall \cite[Theorem~17]{AhmMoh}, which is a quantitatively weaker version of Theorem~\ref{thm:32OS odd}. We will use this result in Section~\ref{sec: character} and so provide a proof which follows the same scheme as that introduced in \cite{AhmMoh}, while paying special attention to certain intricacies involved in carrying out the induction. In particular, the proof relies on Lemma~\ref{lem: induction} to sidestep a potential issue that appears to have been overlooked in the original proof of \cite[Theorem~17]{AhmMoh}. It also serves as a prelude to the proof of Theorem~\ref{thm:32OS odd}. 

We employ for  the first of many times a decomposition of a $(3,2)$-orthogonal set $S$ that appears in \cite{AhmMoh}, and so we describe it in detail. Given two distinct elements $\bm{x}, \bm{y} \in S$, every element of $S \setminus \{\bm{x}, \bm{y}\}$ is either not orthogonal to $\bm{x}$, or not orthogonal to $\bm{y}$, or orthogonal to both $\bm{x}$ and $\bm{y}$. Using the notation of Lemma~\ref{claim:NsOrth} we decompose $S$ as follows
\begin{equation}
\label{eqn:1stdecomp}
S = S_{\bm{x}} \cup S_{\bm{y}} \cup S_{\bm{x}\bm{y}} \cup \{\bm{x}, \bm{y}\},
\end{equation}
where $S_{\bm{x}}$ and $S_{\bm{y}}$ are defined in~\eqref{eqn:Nsdef} and $S_{\bm{x}\bm{y}} = S \cap \{\bm{x}, \bm{y}\}^\perp$. Note that $ \{\bm{x}, \bm{y}\}$ can be left out if $\B(\bm{x}, \bm{y}) \neq 0$ because $\bm{x} \in S_{\bm{y}}$ and vice versa. When bounding $|S_{\bm{x}\bm{y}}|$ by induction it is essential that $\B \rst{\{\bm{x},\bm{y}\}^\perp}$ is non-degenerate.

\begin{proposition}
\label{prop:AM32UB}
Let $q$ be odd and let $\B$ be a non-degenerate symmetric bilinear form over $\F_q^n$. If $S\subset \F_q^n\setminus\{\bm{0}\}$ is $(3,2)$-orthogonal, then
\[
|S| \leq 3q^{\lfloor \frac{n}{2}\rfloor}. 
\]
\end{proposition}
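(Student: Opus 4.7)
The plan is to argue by induction on $n$, stepping down by two. The base cases $n=0,1$ are trivial: only $n=1$ needs a word, since any two distinct nonzero vectors of $\F_q$ fail to be orthogonal, so $(3,2)$-orthogonality forces $|S|\le 2\le 3q^0$.

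For the inductive step with $n\ge 2$, the pivot is whether $S$ contains a self-orthogonal vector that is not orthogonal to everything in $S$. In the favourable case, suppose there exist $\bm{x},\bm{y}\in S$ with $\B(\bm{x},\bm{x})=0$ and $\B(\bm{x},\bm{y})\neq 0$. These vectors are automatically linearly independent (else $\bm{y}=\lambda\bm{x}$ would give $\B(\bm{x},\bm{y})=\lambda\B(\bm{x},\bm{x})=0$), and they satisfy $\B(\bm{x},\bm{y})^2\neq 0=\B(\bm{x},\bm{x})\B(\bm{y},\bm{y})$, so by Lemma~\ref{lem: induction}(i) the restriction $\B\rst{\{\bm{x},\bm{y}\}^\perp}$ is non-degenerate on an $(n-2)$-dimensional subspace. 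I would then use the decomposition~\eqref{eqn:1stdecomp} to write $S=S_{\bm{x}}\cup S_{\bm{y}}\cup S_{\bm{x}\bm{y}}$ and control each piece: Lemma~\ref{claim:NsOrth} together with Lemma~\ref{lem:OSUB} gives $|S_{\bm{x}}|,|S_{\bm{y}}|\le \mathcal{S}_{2,2}(q,n,\B)\le q^{\lfloor n/2\rfloor}$, while the inductive hypothesis applied inside $\{\bm{x},\bm{y}\}^\perp$ gives $|S_{\bm{x}\bm{y}}|\le 3q^{\lfloor n/2\rfloor-1}$. The assumption $q\ge 3$ combines these into $|S|\le 2q^{\lfloor n/2\rfloor}+3q^{\lfloor n/2\rfloor-1}\le 3q^{\lfloor n/2\rfloor}$.

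If no such pair $(\bm{x},\bm{y})$ exists, then every self-orthogonal vector in $S$ is orthogonal to all of $S$. The set $S^{\mathrm{so}}\subset S$ of self-orthogonal vectors is therefore itself an orthogonal set, so $|S^{\mathrm{so}}|\le \mathcal{S}_{2,2}(q,n,\B)\le q^{\lfloor n/2\rfloor}$; and the remaining set $S^{\mathrm{nso}}=S\setminus S^{\mathrm{so}}$ is a $(3,2)$-orthogonal set consisting solely of non-self-orthogonal vectors, which Lemma~\ref{lem: D} bounds by $\max\{2n,\,R(3,n)-1\}$. Using~\eqref{eqn:Ramsey} together with the explicit values of $\mathcal{S}_{2,2}$ in Lemma~\ref{lem:OSUB}, the total is comfortably below $3q^{\lfloor n/2\rfloor}$ for every $n\ge 0$ and every odd $q\ge 3$, with only small-$n$ numerics to verify.

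The main obstacle is the one flagged in the paper itself: the inductive step only works when $\B$ remains non-degenerate on the restricted subspace, which is not automatic. The case split above is engineered precisely so that in the branch where induction is invoked, one of the chosen vectors is self-orthogonal, forcing the discriminant condition of Lemma~\ref{lem: induction}(i) to hold; the complementary branch, in which this structural condition fails on all of $S$, is handled instead by combining the Ramsey-type bound of Lemma~\ref{lem: D} with the orthogonal-set bound of Lemma~\ref{lem:OSUB}.
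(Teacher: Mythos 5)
Your proposal is correct, and it takes a genuinely different route from the paper's own proof of Proposition~\ref{prop:AM32UB}. The paper, following the Ahmadi--Mohammadian scheme, first disposes of the case where every linearly independent pair in $S$ is orthogonal via Lemma~\ref{lem: AM}, then picks a non-orthogonal pair $\{\bm{x},\bm{y}\}$ and, when neither vector is self-orthogonal (so that Lemma~\ref{lem: induction}~\eqref{item:lem:induction:i} cannot be applied to that pair), descends into $S_{\bm{xy}}$, extracts a further non-orthogonal pair there, and runs a second round of decompositions controlled by Lemma~\ref{claim:Struct}. You avoid that entire branch by splitting instead on whether some self-orthogonal vector of $S$ is non-orthogonal to some element of $S$: in the affirmative case the discriminant condition of Lemma~\ref{lem: induction}~\eqref{item:lem:induction:i} holds automatically and the induction runs exactly as in the paper's favourable sub-case, while in the complementary case the self-orthogonal vectors form an orthogonal set (at most $\mathcal{S}_{2,2}(q,n,\B)\le q^{\lfloor n/2\rfloor}$ by Lemma~\ref{lem:OSUB}) and the remaining non-self-orthogonal vectors are bounded by the Ramsey-type Lemma~\ref{lem: D}, with no induction needed at all. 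This is essentially the strategy the paper itself deploys later, via Lemma~\ref{lem: self orth}, in the proof of Theorem~\ref{thm:32OS odd}, transplanted to the weaker bound here; it buys a shorter argument that needs neither Lemma~\ref{lem: AM} nor the nested decomposition, at the cost of importing the Ramsey input behind Lemma~\ref{lem: D}. One small caveat on your phrase ``comfortably below'': the numerics are actually tight in two places --- at $(q,n)=(3,3)$ your second branch gives exactly $q+2n=9=3q^{\lfloor n/2\rfloor}$, and in the first branch $2q^{\lfloor n/2\rfloor}+3q^{\lfloor n/2\rfloor-1}\le 3q^{\lfloor n/2\rfloor}$ is an equality when $q=3$ --- but since the proposition asserts only a non-strict inequality, both branches close and the proof is complete.
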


\begin{proof}
We proceed by induction on $n$. Note that the result is true for $n\in \{0, 1\}$ because $|S| \leq 2$ and assume it is also true for all dimensions strictly less than $n$.

If every linearly independent pair of vectors in $S$ is mutually orthogonal, by Lemma~\ref{lem: AM} and Lemma~\ref{lem:OSUB}, we have
\[
|S| \leq \S_{2,2}(q,n,\B) + n \leq 2q^{\lfloor \frac{n}{2}\rfloor}.
\]
Hence suppose there exists a linearly independent pair $\{\bm{x}, \bm{y}\}\subset S$, with $\B(\bm{x}, \bm{y})\not=0$. If at least one of these vectors is self-orthogonal, by Lemma~\ref{lem: induction}~\eqref{item:lem:induction:i}, $\B \rst{\{\bm{x},\bm{y}\}^\perp}$ is non-degenerate. Recalling the decomposition \eqref{eqn:1stdecomp} and noting that $\bm{x}\in S_{\bm{y}}$ and $\bm{y}\in S_{\bm{x}}$, we have
\[
|S| \leq |S_{\bm{x}}| + |S_{\bm{y}}|+ |S_{\bm{xy}}|.
\]
Since $\bm{x}$ and $\bm{y}$ are linearly independent, $\{\bm{x}, \bm{y}\}^\perp$ constitutes a subspace of $\F_q^n$ of dimension $n-2$. Then, using that $S_{\bm{xy}}\subset \{\bm{x}, \bm{y}\}^\perp$, for $n\in \{2, 3\}$ we have $|S_{\bm{xy}}|\leq q^{n-2}\leq q$ and for $n\geq 4$ we have $|S_{\bm{xy}}|\leq 3q^{\lfloor \frac{n}{2}\rfloor-1}$ by the induction hypothesis. Furthermore, by Lemma~\ref{lem:OSUB} and Lemma~\ref{claim:NsOrth}, we have $|S_{\bm{x}}|, |S_{\bm{y}}| \leq q^{\lfloor \frac{n}{2}\rfloor}$. Adding this all up, we obtain the required result in this case.

Next, suppose that neither $\bm{x}$ nor $\bm{y}$ is self-orthogonal and note that in this case, we may no longer assume $\B \rst{\{\bm{x},\bm{y}\}^\perp}$ is non-degenerate (see Remark~\ref{rem:1}). If every pair of elements of $S_{\bm{xy}}$ is mutually orthogonal, by Lemma~\ref{lem:OSUB}, we have $|S_{\bm{xy}}|\leq q^{\lfloor \frac{n}{2}\rfloor}$ and the required result follows.
 Hence suppose there exist $\bm{v}, \bm{w}\in S_{\bm{xy}}$, with $\B(\bm{v}, \bm{w})\not=0$. Again, if at least one of $\{\bm{v}, \bm{w}\}$ is self-orthogonal, we may repeat the arguments of the first case to obtain the required result. Thus assume otherwise. Consider the decomposition
 \begin{equation}
 \label{eqn:AMDecomp2}
 S = S_{\bm{x}}\cup S_{\bm{v}}\cup S_{\bm{xv}}\cup \{\bm{x}, \bm{v}\}.
 \end{equation}
 By Lemma~\ref{lem: induction}~\eqref{item:lem:induction:i}, $\B \rst{\{\bm{x},\bm{v}\}^\perp}$ is non-degenerate. Employing the notation of Lemma~\ref{claim:NsOrth}, note that $\bm{y}\in T_{\bm{x}}$ and $\bm{w}\in T_{\bm{v}}$. Suppose there exists $\bm{z}\in R_{\bm{v}}$, with $\B(\bm{y}, \bm{z}) \not=0$. Then by Lemma~\ref{lem: induction}~\eqref{item:lem:induction:i}, $\B \rst{\{\bm{y},\bm{z}\}^\perp}$ is non-degenerate and we may repeat the arguments of the first case, with $\bm{z}$ in place of $\bm{x}$, to obtain the required result. Otherwise, if $\bm{y}$ is orthogonal to $R_{\bm{v}}$, it follows that $R_{\bm{v}} \sqcup \{\bm{y}, \bm{w}\}$ is an orthogonal set, which by Lemma~\ref{claim:Struct}, implies that $\dim(V_{\bm{v}}) \leq \lfloor n/2\rfloor -1$. By a similar argument, we may assume $R_{\bm{x}} \sqcup \{\bm{y}, \bm{w}\}$ is an orthogonal set and that $\dim(V_{\bm{x}}) \leq \lfloor n/2\rfloor -1$. Furthermore note that $\bm{w}\in S_{\bm{xy}}$ and so $\bm{w}\not\in S_{\bm{x}}$ and similarly $\bm{y}\not \in S_{\bm{v}}$.
 
 For $n\in \{2, 3\}$, by Lemma~\ref{claim:Struct} and the above observations, we have $|S_{\bm{x}}\cup S_{\bm{v}}|\leq 2n-2$ and $|S_{\bm{xv}}|\leq q$. Thus going back to \eqref{eqn:AMDecomp2}, we get $|S|\leq 2n+q \leq 3q$ as required. For $n\geq 4$, we may again use Lemma~\ref{claim:Struct} to see $|S_{\bm{x}}\cup S_{\bm{v}}|\leq 2(q^{\lfloor n/2\rfloor -1}+3)-2$. Then
 \begin{align*}
 |S| &\leq (2q^{\lfloor n/2\rfloor -1} +4) + 3q^{\lfloor \frac{n}{2}\rfloor-1} + 2\\
 &=5q^{\lfloor n/2\rfloor -1} +6\\ &\leq 3q^{\lfloor n/2\rfloor},
 \end{align*}
for all $n\geq 4$ and $q\geq 3$.
 \end{proof}

\section{Proof of Theorem~\ref{thm:32OS odd}}

We set 
\[
d_n = \begin{cases} \frac{n-1}2, &\mbox{if } n\geq 3 \ \text{is odd};\\
\frac{n}2, &\mbox{if } n\geq 2 \ \text{is even and}\ \varepsilon(\mathcal{B}) = 1; \\
 \frac{n}2 -1, &\mbox{if } n\geq 2 \ \text{is even and}\ \varepsilon(\mathcal{B}) = \gamma. \end{cases}
\]
It was proved in \cite{Vinh} that $d_n$ is the dimension of the largest orthogonal subspace of $\F_q^n$ (also follows from Lemma~\ref{claim:Struct}). Note that $d_{n-2} = d_n -1$.

We proceed by induction on $n$. For $n=0$ and $n=1$ the size of the largest $(3, 2)$-orthogonal set is at most 2, and the theorem follows. 

We will show that either $|S| \leq q^{d_n} + O(q^{d_n-1})$ or that $S$ possesses certain properties that make proving the theorem a matter of case analysis. For sufficiently large $q$ the former upper bound is smaller than the one in the theorem.

We phrase the argument in terms of the graph $G$ with vertex set $S$ and two vectors adjacent precisely when they are not mutually orthogonal. The two properties of $G$ we use is that it is triangle free (follows from $S$ being $(3,2)$-orthogonal) and the largest independent set in $G$ having size at most $\mathcal{S}_{2,2}(q,n,\B)$ (because an independent set is an orthogonal subset of $\F_q^n$). We will also use the fact that every orthogonal set in $\F_q^n$ has size at most $\mathcal{S}_{2,2}(q,n,\B)$, a quantity that is determined in Lemma~\ref{lem:OSUB}.

By Lemma~\ref{lem: AM} we may assume from now on the existence of linearly independent $\{\bm{v}, \bm{w}\} \subset S$ with $\bm{vw}$ an edge (that is $\B(\bm{v},\bm{w}) \neq 0$). This is because $n \leq q^{d_n}-2$ for all $n\geq 2$ when $q\geq 5$. We decompose $S$ in the neighbourhood $S_{\bm{v}}$ of $\bm{v}$, the neighbourhood $S_{\bm{w}}$ of $\bm{w}$, and the set of vertices $S_{\bm{v}\bm{w}}$ that are not adjacent to either $\bm{v}$ or $\bm{w}$:
\[
S = S_{\bm{v}} \cup S_{\bm{w}} \cup S_{\bm{vw}},
\]
where $S_{\bm{vw}} \subset \{\bm{v},\bm{w}\}^\perp$. We follow the set up of Lemma~\ref{claim:NsOrth} and decompose $S_{\bm{v}} = R_{\bm{v}} \sqcup T_{\bm{v}}$ with $R_{\bm{v}}$ spanning the orthogonal vector space $V_{\bm{v}}$. 

When $n=2$ we get that $S_{\bm{vw}}$ is a subset of a zero dimensional vector space that does not include $\bm{0}$ and so is empty. Since both $S_{\bm{v}}$ and $S_{\bm{w}}$ are orthogonal sets, we get $|S| \leq 2 \mathcal{S}_{2,2}(q,2,\B)$. This proves the theorem for $n=2$.

For $n \geq 3$ we have to be more careful when dealing with $S_{\bm{vw}}$. We need the following to be able to apply the second part of Lemma~\ref{lem: induction}.

\begin{lemma}
\label{lem: self orth}
For $n\geq 3$, let $S \subset \F_q^n$ be a $(3,2)$-orthogonal set with respect to a non-degenerate symmetric bilinear form $\B$. If every pair of linearly independent self-orthogonal vectors in $S$ is orthogonal to one another (that is $\B(\bm{x},\bm{y}) =0$ for all linearly independent self-orthogonal $\bm{x}, \bm{y} \in S$), then 
\[
|S| \leq \begin{cases} \S_{2,2}(q,n,\B) + 2n, &\mbox{if } 0 \leq n \leq 4 ;\\
\S_{2,2}(q,n,\B) + \tfrac{n(n+1)}{2}-1, &\mbox{if } n\geq 5 . \end{cases}
\]
\end{lemma}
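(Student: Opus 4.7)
The plan is to split $S$ into its self-orthogonal part and its non-self-orthogonal part, and bound each separately. Let
\[
O = \{\bm{x} \in S : \B(\bm{x},\bm{x}) = 0\}, \qquad D = \{\bm{x} \in S : \B(\bm{x},\bm{x}) \neq 0\},
\]
so that $S = O \sqcup D$.

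First I would argue that $O$ is an orthogonal set. Given two distinct vectors $\bm{x},\bm{y} \in O$, either they are linearly independent, in which case the hypothesis of the lemma gives $\B(\bm{x},\bm{y}) = 0$; or they are linearly dependent, say $\bm{y} = \lambda \bm{x}$, in which case $\B(\bm{x},\bm{y}) = \lambda \B(\bm{x},\bm{x}) = 0$ because $\bm{x}$ is self-orthogonal. Thus every distinct pair in $O$ is mutually orthogonal, so $|O| \leq \S_{2,2}(q,n,\B)$ by the definition of $\S_{2,2}$.

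Next, $D$ is a subset of the $(3,2)$-orthogonal set $S$ consisting entirely of vectors that are not self-orthogonal, so Lemma~\ref{lem: D} (applied with $F = \F_q$) yields
\[
|D| \leq \begin{cases} 2n, &\mbox{if } 0 \leq n \leq 4 ;\\ \tfrac{n(n+1)}{2}-1, &\mbox{if } n\geq 5. \end{cases}
\]
Adding the two bounds gives the stated inequality. The argument is essentially immediate once one observes that the hypothesis forces $O$ to be orthogonal; there is no serious obstacle, and the only care needed is the trivial verification that linearly dependent self-orthogonal vectors are automatically mutually orthogonal.
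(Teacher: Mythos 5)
Your proposal is correct and follows essentially the same route as the paper: decompose $S$ into the self-orthogonal part (which the hypothesis forces to be an orthogonal set, hence of size at most $\S_{2,2}(q,n,\B)$) and the non-self-orthogonal part $D$, bounded by Lemma~\ref{lem: D}. The only difference is that you spell out the easy check for linearly dependent self-orthogonal pairs, which the paper leaves implicit.
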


\begin{proof}
Let $D$ be the set of vectors in $S$, which are not self-orthogonal:
\[
D = \{ \bm{x} \in S : \B(\bm{x},\bm{x}) \neq 0\}.
\]
By the hypothesis on $S$ we have that $S \setminus D$ is an orthogonal set. Hence $|S \setminus D| \leq \S_{2,2}(q,n,\B)$. The claim follows by bounding $|D|$ via Lemma~\ref{lem: D}.
\end{proof}

The upper bound on $|S|$ in Lemma~\ref{lem: self orth} is smaller than the bound in Theorem~\ref{thm:32OS odd} for $n \geq 3$ when $q\geq 5$. From now on we assume the existence of linearly independent vectors $\{\bm{v},\bm{w}\}$ that are self-orthogonal but are not mutually orthogonal:
\[
\B(\bm{v},\bm{v}) = \B(\bm{w},\bm{w}) = 0 \text{, but } \B(\bm{v},\bm{w}) \neq 0.
\]

Recalling the definition of $f=f(q,n, \B)$ inferred from Theorem~\ref{thm:32OS odd} and~\eqref{eqn:f}, we get from Lemma~\ref{lem: induction}~\eqref{item:lem:induction:ii} 
\begin{equation} 
\label{eq:S*}
|S_{\bm{vw}}| \leq 2 q^{d_{n-2}} + f = 2 q^{d_n-1} + f.
\end{equation}
Therefore $|S_{\bm{vw}}|$ is much smaller than the bound on $|S|$ we are trying to prove. What drives the proof is that if either $V_{\bm{v}}$ or $V_{\bm{w}}$ is not of maximum dimension, then we are done. To see why, suppose $V_{\bm{v}}$ is not of maximum dimension. Then using Lemma~\ref{claim:Struct}, Lemma~\ref{lem:OSUB}, Lemma~\ref{claim:NsOrth} and \eqref{eq:S*} we get
\begin{align*}
|S| 
&  \leq |S_{\bm{v}}| + |S_{\bm{w}}| + |S_{\bm{vw}}| \\
& \leq (q^{d_n-1}+3) + (q^{d_n}+1) + (2 q^{d_n-1} +f) \\
& = q^{d_n} + 3 q^{d_n-1} + 4+f.
\end{align*} 
Since for $q\geq 7$ we have $3 q^{d_n-1} + 4 \leq q^{d_n}$, we assume from now on $\dim(V_{\bm{v}}) = \dim(V_{\bm{w}}) = d_n$. 

One more property we need is that $\bm{v} \in V_{\bm{w}}$ and $\bm{w} \in V_{\bm{v}}$. To confirm, say the latter, note that there is no edge from $\bm{w}$ to $R_{\bm{v}}$ (because the graph is triangle-free). Therefore $\bm{w} \perp \langle R_{\bm{v}} \rangle = V_{\bm{v}}$. By Lemma~\ref{lem: basic}~\eqref{item:lem: basic:ii}, and using the fact that $\bm{w}$ is self-orthogonal, we get $\bm{w} \in V_{\bm{v}}$.

We summarise all this in a proposition.

\begin{proposition}
\label{prop: summary}
Let $q \geq 7$ be an odd prime power, $n\geq 3$ and $S \subset \F_q^n$ be a $(3,2)$-orthogonal set with respect to a non-degenerate symmetric bilinear form. Then $|S|$ satisfies the upper bound of Theorem~\ref{thm:32OS odd} unless there exist linearly independent self-orthogonal vectors $\bm{v}$ and $\bm{w}$ with $\dim(V_{\bm{v}})=\dim(V_{\bm{w}}) = d_n$; and $\bm{v} \in V_{\bm{w}}$ and $\bm{w} \in V_{\bm{v}}$. In this case $S_{\bm{vw}} = S \cap \{\bm{v},\bm{w}\}^\perp$ satisfies $|S_{\bm{vw}}| \leq 2 q^{d_n-1} + f(q,n, \B)$, with $f(q,n, \B)$ inferred from Theorem~\ref{thm:32OS odd} and~\eqref{eqn:f}.
\end{proposition}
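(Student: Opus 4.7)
The plan is to organize Section~4's case analysis into a clean dichotomy, using induction on $n$ (inherited from the proof of Theorem~\ref{thm:32OS odd}). First I would apply two preliminary reductions to dispose of the ``easy'' configurations: if $S$ contains no linearly independent pair $\{\bm{x}, \bm{y}\}$ with $\B(\bm{x}, \bm{y}) \neq 0$, Lemma~\ref{lem: AM} combined with Lemma~\ref{lem:OSUB} gives $|S| \leq \S_{2,2}(q, n, \B) + n$, which for $q \geq 7$ is already well below the theorem's bound; and if every linearly independent pair of self-orthogonal vectors in $S$ is mutually orthogonal, Lemma~\ref{lem: self orth} similarly yields $|S| \leq \S_{2,2}(q, n, \B) + O(n^{2})$, again below the theorem's bound. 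So we may assume there exist linearly independent self-orthogonal $\bm{v}, \bm{w} \in S$ with $\B(\bm{v}, \bm{w}) \neq 0$.

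Next I would invoke Lemma~\ref{lem: induction}~\eqref{item:lem:induction:ii} to certify that $\B \rst{\{\bm{v}, \bm{w}\}^\perp}$ is non-degenerate with $\varepsilon(\B \rst{\{\bm{v}, \bm{w}\}^\perp}) = \varepsilon(\B)$. Since $d_{n-2} = d_n - 1$ and the $\varepsilon$-invariant is preserved, the inductive hypothesis applied to the $(3,2)$-orthogonal set $S_{\bm{vw}} \subset \{\bm{v}, \bm{w}\}^\perp \cong \F_q^{n-2}$ returns $|S_{\bm{vw}}| \leq 2 q^{d_n - 1} + f(q, n, \B)$, which is the final conclusion~\eqref{eq:S*}. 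I would then decompose $S = S_{\bm{v}} \cup S_{\bm{w}} \cup S_{\bm{vw}}$ and examine $\dim V_{\bm{v}}$ and $\dim V_{\bm{w}}$: if, say, $\dim V_{\bm{v}} \leq d_n - 1$, then Lemma~\ref{claim:NsOrth} combined with Lemma~\ref{claim:Struct} yields $|S_{\bm{v}}| \leq q^{d_n - 1} + O(1)$, Lemma~\ref{lem:OSUB} gives $|S_{\bm{w}}| \leq q^{d_n} + 1$, and summing produces $|S| \leq q^{d_n} + 3 q^{d_n - 1} + O(1) \leq 2 q^{d_n} + f$ for $q \geq 7$. Hence we may assume $\dim V_{\bm{v}} = \dim V_{\bm{w}} = d_n$.

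Finally, the inclusions $\bm{w} \in V_{\bm{v}}$ and $\bm{v} \in V_{\bm{w}}$ should fall out of triangle-freeness of the graph $G$: since $\bm{v}\bm{w}$ is an edge, no $\bm{r} \in R_{\bm{v}}$ can be adjacent to $\bm{w}$ (else $\bm{v}, \bm{w}, \bm{r}$ would be a triangle), so $\bm{w} \perp \langle R_{\bm{v}} \rangle = V_{\bm{v}}$; the self-orthogonality of $\bm{w}$ together with the maximality of $\dim V_{\bm{v}}$ then forces $\bm{w} \in V_{\bm{v}}$ via Lemma~\ref{lem: basic}~\eqref{item:lem: basic:ii}, and the symmetric argument handles $\bm{v} \in V_{\bm{w}}$. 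The main delicate step is Lemma~\ref{lem: induction}~\eqref{item:lem:induction:ii}: without preservation of $\varepsilon$ under restriction to $\{\bm{v}, \bm{w}\}^\perp$, the inductive hypothesis would return a bound with $f(q, n-2, \B \rst{\cdot})$ rather than $f(q, n, \B)$, and the exact form~\eqref{eq:S*} would not follow directly; the rest of the argument is a straightforward combination of the lemmas collected in Section~3.
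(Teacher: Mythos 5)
Your proposal is correct and follows essentially the same route as the paper: dispose of the degenerate configurations via Lemma~\ref{lem: AM} and Lemma~\ref{lem: self orth}, use Lemma~\ref{lem: induction}~\eqref{item:lem:induction:ii} plus the inductive hypothesis to get \eqref{eq:S*} (noting the $\varepsilon$-preservation so $f$ is unchanged), handle the case $\dim V_{\bm{v}} < d_n$ by summing the bounds from Lemmas~\ref{claim:Struct}, \ref{lem:OSUB}, \ref{claim:NsOrth}, and deduce $\bm{w}\in V_{\bm{v}}$, $\bm{v}\in V_{\bm{w}}$ from triangle-freeness together with Lemma~\ref{lem: basic}~\eqref{item:lem: basic:ii}. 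This matches the paper's argument in both structure and the lemmas invoked.
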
 

The final preparatory result is that for the remaining $S$ described in Proposition~\ref{prop: summary}, $R_{\bm{z}}$ is considerably smaller than $q^{d_n}$ for all $\bm{z} \in S_{\bm{vw}}$. The proof is typical of forthcoming considerations. The key observation is that if a subspace of a vector space does not contain a single element of the vector space, then it is considerably smaller.

\begin{lemma}
\label{lem: vz}
Let $q$ be an odd prime power and let $S \subset \F_q^n$ be $(3,2)$-orthogonal with respect to a non-degenerate symmetric bilinear form $\B$. Suppose $\{\bm{v},\bm{w}\} \subset S$ is a linearly independent subset that consists of two self-orthogonal vectors that are not mutually orthogonal (that is $\B(\bm{v},\bm{v})=\B(\bm{w},\bm{w})=0$, but $\B(\bm{v},\bm{w})\neq 0$). If $\bm{z} \in S_{\bm{vw}} = S \cap \{\bm{v},\bm{w}\}^\perp$, then 
\[
|R_{\bm{z}}| \leq 3q^{d_n-1} -3.  
\]
\end{lemma}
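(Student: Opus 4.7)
The plan is to decompose $R_{\bm{z}}$ via $(3,2)$-orthogonality against the pair $\{\bm{v},\bm{w}\}$, enlarge each piece by a self-orthogonal vector so it sits inside a maximal orthogonal subspace of self-orthogonal vectors, and then cut down by a hyperplane coming from the linear functional $\phi(\bm{u}) := \mathcal{B}(\bm{u},\bm{z})$.

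For each $\bm{r} \in R_{\bm{z}}$, applying $(3,2)$-orthogonality to $\{\bm{r},\bm{v},\bm{w}\}$ with $\mathcal{B}(\bm{v},\bm{w})\neq 0$ forces $\bm{r}\perp\bm{v}$ or $\bm{r}\perp\bm{w}$. Setting $A = R_{\bm{z}} \cap \bm{v}^\perp$ and $B = R_{\bm{z}} \cap \bm{w}^\perp$ gives $R_{\bm{z}} = A \cup B$. The set $A \cup \{\bm{v}\}$ is pairwise orthogonal ($A$ is orthogonal by Lemma~\ref{claim:NsOrth} and $A\perp\bm{v}$ by construction) and consists of self-orthogonal vectors, so Lemma~\ref{claim:Struct} says $U_A := \langle A \cup \{\bm{v}\}\rangle$ is an orthogonal subspace of self-orthogonal vectors with $\dim U_A \leq d_n$; the same is true of $U_B := \langle B \cup \{\bm{w}\}\rangle$. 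The functional $\phi|_{U_A}$ is non-zero (each $\bm{r}\in A$ has $\phi(\bm{r})\neq 0$) yet vanishes at $\bm{v}$ (since $\bm{z}\in\bm{v}^\perp$), so $A\subset U_A\setminus\ker\phi$ and $|A| \leq q^{\dim U_A} - q^{\dim U_A - 1}$, with a symmetric bound for $|B|$.

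To conclude I would split on $\dim V_{\bm{z}}$. If $\dim V_{\bm{z}} \leq d_n-1$, then $R_{\bm{z}} \subset V_{\bm{z}}\setminus\{\bm{0}\}$ already forces $|R_{\bm{z}}| \leq q^{d_n-1} - 1 \leq 3q^{d_n-1}-3$. Otherwise $V_{\bm{z}}$ is itself a maximal orthogonal subspace of self-orthogonal vectors, and since $\bm{v}\not\perp\bm{w}$ at most one of $\bm{v},\bm{w}$ can lie in $V_{\bm{z}}$. If neither does, the second part of Lemma~\ref{lem: basic} forces both $V_{\bm{z}}\cap\bm{v}^\perp$ and $V_{\bm{z}}\cap\bm{w}^\perp$ to be proper hyperplanes of $V_{\bm{z}}$; intersecting each further with the complement of $\bm{z}^\perp$ gives $|A|, |B| \leq q^{d_n-1} - q^{d_n-2}$, so $|R_{\bm{z}}| \leq 2(q-1)q^{d_n-2} \leq 3q^{d_n-1}-3$ for $q \geq 3$.

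The main obstacle is the residual subcase in which, say, $\bm{v}\in V_{\bm{z}}$: the triple $\{\bm{r},\bm{v},\bm{w}\}$ then satisfies $(3,2)$-orthogonality automatically via $\bm{r}\perp\bm{v}$, so the na\"ive hyperplane count on $A = R_{\bm{z}}$ only delivers $(q-1)q^{d_n-1}$, which is far too weak. Here I anticipate using the second part of Lemma~\ref{lem: basic} applied to $\bm{w}\notin V_{\bm{z}}$ to pick $\bm{r}_0\in R_{\bm{z}}$ with $\mathcal{B}(\bm{r}_0,\bm{w})\neq 0$, then stratifying $R_{\bm{z}}$ by the ratio $c(\bm{r}) := \mathcal{B}(\bm{r},\bm{w})/\mathcal{B}(\bm{r},\bm{z})$: each stratum lies in a single affine hyperplane of $V_{\bm{z}}$ containing at most $q^{d_n-1}$ vectors, and an extra triple-orthogonality argument played against $\bm{v}$ and $\bm{r}_0$ should rule out all but three admissible values of $c$, yielding the claimed $3q^{d_n-1}-3$.
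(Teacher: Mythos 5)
Your opening steps are sound: the dichotomy $R_{\bm{z}}=A\cup B$ obtained from the triples $\{\bm{r},\bm{v},\bm{w}\}$, the case $\dim V_{\bm{z}}\leq d_n-1$, and the subcase where neither $\bm{v}$ nor $\bm{w}$ lies in $V_{\bm{z}}$ all check out. But the subcase you yourself flag as the main obstacle is a genuine gap, and the stratification you anticipate cannot close it: nothing in the hypotheses as stated restricts the ratio $c(\bm{r})=\B(\bm{r},\bm{w})/\B(\bm{r},\bm{z})$ to three values, and in fact the literal statement fails in exactly that subcase. Take $n=4$, $q\geq 5$ odd, and $\B$ given by a hyperbolic basis $\bm{e}_1,\bm{f}_1,\bm{e}_2,\bm{f}_2$ with $\B(\bm{e}_i,\bm{f}_i)=1$ and all other products of basis vectors zero (so $\varepsilon(\B)=1$ and $d_4=2$). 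The set $S=\{\bm{e}_1,\bm{f}_1,\bm{e}_2\}\cup\{\alpha\bm{e}_1+\lambda\bm{f}_2:\alpha\in\F_q,\ \lambda\in\F_q^*\}$ is $(3,2)$-orthogonal: any triple with two vectors from the last family contains an orthogonal pair, and the remaining triples contain one of the orthogonal pairs $\bm{e}_1\perp\bm{e}_2$, $\bm{f}_1\perp\bm{e}_2$, $\bm{e}_1\perp(\alpha\bm{e}_1+\lambda\bm{f}_2)$. With $\bm{v}=\bm{e}_1$, $\bm{w}=\bm{f}_1$, $\bm{z}=\bm{e}_2$ every stated hypothesis holds, yet $\bm{v}\in V_{\bm{z}}=\langle\bm{e}_1,\bm{f}_2\rangle$, your ratio $c$ takes $q-1$ distinct values, and $|R_{\bm{z}}|=q^2-q>3q^{d_n-1}-3$. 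So no argument from the stated hypotheses alone can finish your ``$\bm{v}\in V_{\bm{z}}$'' subcase.

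What is missing is the context in which the lemma is proved and applied: by Proposition~\ref{prop: summary} one also has $\dim(V_{\bm{v}})=\dim(V_{\bm{w}})=d_n$ together with $\bm{w}\in V_{\bm{v}}$ and $\bm{v}\in V_{\bm{w}}$ (in the configuration above $\dim(V_{\bm{v}})=1$, which is why it does not arise there). With these in hand the paper argues quite differently from you: starting from $S=S_{\bm{v}}\cup S_{\bm{w}}\cup S_{\bm{vw}}$ and the fact that $R_{\bm{z}}$ consists of self-orthogonal vectors (so avoids $T_{\bm{v}}\cup T_{\bm{w}}$), it covers $R_{\bm{z}}$ by $(V_{\bm{v}}\cap V_{\bm{z}})\setminus\{\bm{0}\}$, $(V_{\bm{w}}\cap V_{\bm{z}})\setminus\{\bm{0}\}$ and $R_{\bm{z}}\cap S_{\bm{vw}}$. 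The memberships $\bm{w}\in V_{\bm{v}}\setminus V_{\bm{z}}$ and $\bm{v}\in V_{\bm{w}}\setminus V_{\bm{z}}$ make the first two intersections proper subspaces of totally isotropic spaces, hence of size at most $q^{d_n-1}$, while $R_{\bm{z}}\cap S_{\bm{vw}}$ is an orthogonal set of self-orthogonal vectors inside $\{\bm{v},\bm{w}\}^\perp$, where Lemma~\ref{lem: induction}~\eqref{item:lem:induction:ii} preserves non-degeneracy and type, so Lemma~\ref{claim:Struct} bounds it by $q^{d_n-1}-1$; summing gives $3q^{d_n-1}-3$. To repair your proposal you need to import those standing assumptions and replace the stratification (indeed the whole $A$/$B$ and hyperplane analysis) by this covering argument.
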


\begin{proof}
We have
\[
S \subset (V_{\bm{v}} \setminus\{\bm{0}\}) \cup T_{\bm{v}} \cup (V_{\bm{w}} \setminus\{\bm{0}\}) \cup T_{\bm{w}} \cup S_{\bm{vw}}.
\]
By Lemma~\ref{claim:NsOrth} we know that $R_{\bm{z}}$ contains only self-orthogonal vectors and so is disjoint from $ T_{\bm{v}} \cup T_{\bm{w}}$. Hence
\[
|R_{\bm{z}}| \leq (|V_{\bm{v}} \cap V_{\bm{z}}|-1) + (|V_{\bm{w}} \cap V_{\bm{z}}|-1) + |R_{\bm{z}}\cap S_{\bm{vw}}|.
\] 
$V_{\bm{v}} \neq V_{\bm{z}}$ because $\bm{w} \in V_{\bm{v}} \setminus V_{\bm{z}}$. Hence $V_{\bm{v}} \cap V_{\bm{z}}$ is a proper subspace of $V_{\bm{v}}$ and is therefore not of maximum dimension. This means $|V_{\bm{z}} \cap V_{\bm{v}}| \leq q^{d_n-1}$. Similarly $|V_{\bm{z}} \cap V_{\bm{w}}| \leq q^{d_n-1}$. Moreover, note that $R_{\bm{z}}\cap S_{\bm{vw}}$ is an orthogonal subset of $\{\bm{v}, \bm{w}\}^{\perp}$, on which non-degeneracy and type of $\B$ is preserved by Lemma~\ref{lem: induction}~\eqref{item:lem:induction:ii}. Thus by Lemma~\ref{claim:Struct}, we have $|R_{\bm{z}}\cap S_{\bm{vw}}| \leq q^{d_n-1}-1$. Putting everything together gives the desired bound.
\end{proof}

We begin the final stage of the proof of the theorem. We assume we are in the remaining case detailed in Proposition~\ref{prop: summary}. Let 
\[
S_{\bm{vw}}^* = S_{\bm{vw}}\setminus (V_{\bm{v}}\cup V_{\bm{w}}).
\]
We distinguish between two different cases.

{\bf Case 1:} An edge exists between $R_{\bm{v}} \cup R_{\bm{w}}$ and $S_{\bm{vw}}^*$.

Suppose $\bm{uz}$ is an edge with $\bm{z} \in S_{\bm{vw}}$ and, say, $\bm{u} \in R_{\bm{v}}$. Our first claim is that $\{\bm{u},\bm{z}\}$ is linearly independent. Indeed if $\bm{z} = \lambda \bm{u}$, then we would have $\B(\bm{z}, \bm{u}) = \lambda B(\bm{u},\bm{u}) =0$, which contradicts $\bm{uz}$ being an edge. Furthermore, $\bm{u}$ is self-orthogonal and so by Lemma~\ref{lem: induction}~\eqref{item:lem:induction:ii} we get that $\B \rst{\{\bm{u}, \bm{z}\}^\perp}$ is non-degenerate and $\varepsilon(\B)$ is preserved. 

We have
\[
|S| \leq |S_{\bm{u}}| + |R_{\bm{z}}| + |T_{\bm{z}}| + |S_{\bm{uz}}|.
\]
We have the following bounds: by Lemma~\ref{lem:OSUB} and Lemma~\ref{claim:NsOrth} $|S_{\bm{u}}|\leq q^{d_n} +1$; by Lemma~\ref{claim:Struct} and Lemma~\ref{lem: vz} (and its proof), $|R_{\bm{z}}| + |T_{\bm{z}}| \leq 3q^{d_n-1}+1$; and by induction, just like in~\eqref{eq:S*}, $|S_{\bm{uz}}| \leq 2q^{d_n-1}+f$. In total
\[
|S| \leq q^{d_n} + 5 q^{d_n-1}+2+f.
\]
We are done because for $q\geq 7$ and $n \geq 3$, $5 q^{d_n-1}+2 \leq q^{d_n}$.

{\bf Case 2:} No edge exists between $R_{\bm{v}} \cup R_{\bm{w}}$ and $S_{\bm{vw}}^*$. 

There is no edge from $S_{\bm{vw}}^*$ to $R_{\bm{v}}$ and therefore $S_{\bm{vw}}^*$ is orthogonal to $R_{\bm{v}}$. It follows that $S_{vw}^*$ is orthogonal to $V_{\bm{v}} = \langle R_{\bm{v}} \rangle$. Similarly, $S_{vw}^*$ is orthogonal to $V_{\bm{w}}$. All vectors in $T_{\bm{v}} \cup T_{\bm{w}} \cup S_{\bm{vw}}$ are not self-orthogonal by Lemma~\ref{lem: basic}~\eqref{item:lem: basic:ii} and $\dim(V_{\bm{v}})$ being maximum. We use the decomposition
\begin{equation}
\label{eqn:filan step}
S \subset (V_{\bm{v}}\setminus \{\bm{0}\}) \cup (V_{\bm{w}}\setminus \{\bm{0}\}) \cup (T_{\bm{v}} \cup T_{\bm{w}} \cup S_{\bm{vw}}). 
\end{equation}
We consider the three different possibilities separately.

\underline{Even $n\geq 4$ and $\varepsilon(\B) =1$}. Our aim is to show that $T_{\bm{v}} = T_{\bm{w}} = S_{\bm{vw}}^* = \emptyset$. Then by \eqref{eqn:filan step}
\[
|S| \leq (|V_{\bm{v}}| -1) + (|V_{\bm{w}}|-1) \leq 2 (q^{d_n}-1) = 2 q^{n/2} -2.
\]
We may assume $T_{\bm{v}}=T_{\bm{w}} = \emptyset$ else, by Lemma~\ref{claim:Struct}, $V_{\bm{v}}$ or $V_{\bm{w}}$ are not of maximum dimension, which is not allowed by Proposition~\ref{prop: summary}. To show that $S_{\bm{vw}}^* = \emptyset$, suppose for a contradiction that $\bm{z} \in S_{\bm{vw}}^*$, then $V_{\bm{v}} \cup \{\bm{z}\}$ would be an orthogonal set, forcing, via Lemma~\ref{claim:Struct},  $V_{\bm{v}}$ not to have maximum dimension. 

\underline{Odd $n\geq 3$}. We want to show $|T_{\bm{v}}| + |T_{\bm{w}}| + |S_{\bm{vw}}^*| \leq 3$. Then by \eqref{eqn:filan step}
\[
|S| \leq (|V_{\bm{v}}| -1) + (|V_{\bm{w}}|-1) +3 \leq 2 q^{d_n}+1 = 2 q^{(n-1)/2} +1.
\]
For any distinct $\bm{x},\bm{y} \in S_{\bm{vw}}^*$, $\bm{xy}$ is an edge. This is because if $\bm{xy}$ were not an edge, then $V_{\bm{v}} \cup \{\bm{x},\bm{y}\}$ would be an orthogonal set of size $|V_{\bm{v}}|+2$, which, by Lemma~\ref{claim:Struct}, would force $V_{\bm{v}}$ not to have maximum dimension. Therefore the induced subgraph on $S_{\bm{vw}}^*$ is complete and triangle-free. Hence $S_{\bm{vw}}^*$ must have at most two vertices. Moreover, by Lemma~\ref{claim:Struct}, $|T_{\bm{v}}|, |T_{\bm{w}}| \leq 1$ (else the subspaces do not have maximum dimension). We are done unless $|S_{\bm{vw}}^*|=|T_{\bm{v}} \cup T_{\bm{w}}|=2$. Suppose $S_{\bm{vw}}^* = \{\bm{x},\bm{y}\}$ with $\bm{xy}$ an edge, and $T_{\bm{v}} = \{\bm{u}\}$. The graph is triangle-free and so one of $\bm{ux}$, $\bm{uy}$ is not an edge. Suppose that $\bm{ux}$ is not an edge. Then $V_{\bm{v}} \cup \{\bm{u},\bm{x}\}$ is an orthogonal set, forcing $V_{\bm{v}}$ not to be of maximal dimension.

\underline{Even $n\geq 4$ and $\varepsilon(\B) =\gamma$}. We want to show $|T_{\bm{v}}| + |T_{\bm{w}}| + |S_{\bm{vw}}^*| \leq 6$. Then  by \eqref{eqn:filan step}
\[
|S|  \leq (|V_{\bm{v}}| -1) + (|V_{\bm{w}}|-1) +6 \leq 2 q^{d_n}+4 = 2 q^{n/2-1} +4.
\]
In fact, by Lemma~\ref{claim:Struct} and Proposition~\ref{prop: summary}, $|T_{\bm{w}}| \leq 2$ and we must show $|T_{\bm{v}}| + |S_{\bm{vw}}^*| \leq 4$.

Note that every vertex in $T_{\bm{v}} \cup S_{\bm{vw}}^*$ is orthogonal to $R_{\bm{v}}$ and therefore is orthogonal to $V_{\bm{v}}$. Similarly, $S_{\bm{vw}}^*$ is orthogonal to $\langle V_{\bm{v}} \cup V_{\bm{w}} \rangle = V_{\bm{v}} + V_{\bm{w}}$. Now consider the graph $H$ induced on $T_{\bm{v}} \cup S_{\bm{vw}}^*$. This is a triangle-free graph. Moreover, it has no independent set of size 3 because otherwise we could join this set to $V_{\bm{v}}$ and obtain an orthogonal set of size $|V_{\bm{v}}| +3$, which would force $V_{\bm{v}}$ not to have maximum dimension. By Lemma~\ref{lem: graph theory} we get $|T_{\bm{v}} \cup S_{\bm{vw}}^*| \leq 5$ with equality only when $H$ is a 5-cycle. Our final task is to rule out this possibility. Suppose for a contradiction that $H$ is a 5-cycle.

We set $T_{\bm{v}} = \{\bm{u}_1, \bm{u}_2\}$ and $S_{\bm{vw}}^* = \{\bm{z}_1, \bm{z}_2, \bm{z}_3\}$. $\bm{u}_1\bm{u}_2$ is not an edge (because both $\bm{u}_1, \bm{u}_2$ are incident to $\bm{v}$) and so $H$ can be taken to be the 5-cycle $\bm{z}_1 \bm{u}_1 \bm{z}_2 \bm{u}_2 \bm{z}_3$. The complement of $H$ is the 5-cycle $\bm{z}_1 \bm{z}_2 \bm{z}_3\bm{u}_1 \bm{u}_2$. In the complement of $H$ vertex adjacency is equivalent to orthogonality, and so $\{\bm{z}_1 ,\bm{z}_2\}\subset (V_{\bm{v}} + V_{\bm{w}})^\perp$ is an orthogonal (and hence linearly independent) set in the orthogonal complement of $ V_{\bm{v}} + V_{\bm{w}}$.

We make a small digression to investigate the dimension of $V_{\bm{v}} + V_{\bm{w}}$. We may assume $V_{\bm{v}} \cap V_{\bm{w}} = \{\bm{0}\}$. This is because if the intersection is non-trivial, then
\[
|S| \leq |(V_{\bm{v}} \cup V_{\bm{w}}) \setminus \{\bm{0}\}| + |T_{\bm{v}}| + |T_{\bm{w}}| + |S_{\bm{vw}}^*| \leq (2 q^{d_n} -q) +7 \leq 2 q^{d_n} +4,
\]
and we are done. We may therefore assume that $\dim(V_{\bm{v}} + V_{\bm{w}}) = 2 d_n = n-2$ and hence
\[
\F_q^n = (V_{\bm{v}} + V_{\bm{w}}) \oplus \langle \bm{z}_1 \rangle \oplus \langle \bm{z}_2\rangle.
\]

To complete the argument we exploit the orthogonality relations encoded in the complement of $H$ and show that $\bm{z_3} = \bm{0}$, the contradiction we are after. To start note that $\bm{z}_3 \in  \langle \bm{z}_1 , \bm{z}_2\rangle $. As $\bm{z}_3$ is orthogonal to $\bm{z}_2$, we get $\bm{z}_3 = \lambda \bm{z}_1$. We are left to show $\lambda=0$. 

We next show $\bm{u}_1, \bm{u}_2 \in V_{\bm{v}} \oplus \langle \bm{z}_1 , \bm{z}_2\rangle$.  Let's start with, say, the decomposition
\[
\bm{u}_1 = \alpha \bm{z}_1 + \beta \bm{z}_2 + \bm{x}_{\bm{v}} +  \bm{x}_{\bm{w}},
\] 
for $\alpha, \beta \in \F_q$, $\bm{x}_{\bm{v}} \in V_{\bm{v}}$, and $\bm{x}_{\bm{w}} \in V_{\bm{w}}$. Suppose for a contradiction that $\bm{x}_{\bm{w}} \neq \bm{0}$. By Lemma~\ref{lem: basic}~\eqref{item:lem: basic:ii} and the maximality of $\dim(V_{\bm{v}})$ we get that the self-orthogonal vector $\bm{x}_{\bm{w}}$ is not orthogonal to $V_{\bm{v}}$. Therefore there exists $\bm{y} \in V_{\bm{v}} \setminus \{\bm{0}\}$ such that $\B(\bm{x}_{\bm{w}}, \bm{y}) \neq 0$. But then
\[
\B(\bm{u}_1, \bm{y}) = \B(\bm{x}_{\bm{w}}, \bm{y}) \neq 0,
\]
a contradiction to $\bm{u}_1$ being orthogonal to the whole of $V_{\bm{v}}$. We therefore have
\[
\bm{u}_1 = \alpha \bm{z}_1 + \beta \bm{z}_2 + \bm{x}_{\bm{v}} \text{ and } \bm{u}_2 = \alpha' \bm{z}_1 + \beta' \bm{z}_2 + \bm{x}_{\bm{v}}'.
\]
 
Now, $\bm{u}_2$ is orthogonal to $\bm{z_1}$ and so $0 = \alpha' \B(\bm{z}_1, \bm{z}_1)$, which gives $\alpha '=0$. Moreover $\bm{u}_2 \notin V_{\bm{v}}$, which gives $\beta' \neq 0$. Next, $\bm{u}_2$ is orthogonal to $\bm{u}_1$ and so $0= \beta \beta' \B(\bm{z}_2, \bm{z}_2)$. Hence $\beta=0$ and, similarly to above, $\bm{u}_1 = \alpha \bm{z}_1 + \bm{x}_{\bm{v}}$ for $\alpha \in \F_q^*$. Finally, $\bm{u}_1$ is orthogonal to $\bm{z}_3 = \lambda \bm{z}_1$. Hence $0 = \lambda \alpha \B(\bm{z}_1,\bm{z}_1)$, which implies the desired $\lambda =0$.

The graph $H$ is therefore not a 5-cycle and consequently $|T_{\bm{v}} \cup S_{\bm{vw}}^*| \leq 4$. The proof of the theorem is concluded.

\section{Character sum proof of Theorem~\ref{thm:32OS odd} for even $n$, $\varepsilon(\B) =1$ and all odd $q$}
\label{sec: character}
First, we recall some basic facts from the theory of character sums, which we use to give an alternative proof of Theorem~\ref{thm:32OS odd} for even $n$ and $\varepsilon(\B) =1$ that holds for all odd $q$. See, for example, \cite[Chapter~5]{LiNi} for more details.

\begin{lemma}
\label{claim:orthsg}
Let $H$ be a subgroup of a finite abelian group $G$ and $\chi$ a character of $G$, then
     \begin{equation*}
\sum_{g\in H}\chi(g)=  \begin{cases}
    |H| \quad &\text{if}\quad \chi\ \text{is trivial on}\ H,\\
    0 \quad &\text{otherwise}.
    \end{cases}
\end{equation*}
\end{lemma}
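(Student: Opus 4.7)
The plan is to treat the two cases of the formula separately, handling the trivial case directly and then exploiting the group structure to kill the sum in the nontrivial case via a standard change-of-variable trick.

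First, I would dispense with the easy case. If $\chi$ is trivial on $H$, then $\chi(g) = 1$ for every $g \in H$, so the sum is simply $\sum_{g \in H} 1 = |H|$. This requires no further comment.

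The main step is the nontrivial case. Suppose $\chi$ is not trivial on $H$, so there exists $h_0 \in H$ with $\chi(h_0) \neq 1$. Denote $\Sigma = \sum_{g \in H} \chi(g)$. I would then use the fact that left multiplication by $h_0$ is a bijection of $H$ onto itself, combined with the multiplicativity of $\chi$, to write
\[
\chi(h_0) \Sigma \;=\; \sum_{g \in H} \chi(h_0) \chi(g) \;=\; \sum_{g \in H} \chi(h_0 g) \;=\; \sum_{g' \in H} \chi(g') \;=\; \Sigma,
\]
where the second-to-last equality comes from the substitution $g' = h_0 g$, which ranges over $H$ as $g$ does. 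Rearranging yields $(\chi(h_0) - 1) \Sigma = 0$, and since $\chi(h_0) - 1 \neq 0$ inside the field $\C$, we conclude $\Sigma = 0$.

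There is essentially no obstacle here: the argument is the classical orthogonality proof, and the only point to verify carefully is that the bijection $g \mapsto h_0 g$ legitimately permutes $H$ (which holds because $H$ is a subgroup containing $h_0$). No further results from the paper are needed, and the lemma follows immediately in both cases.
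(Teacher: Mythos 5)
Your argument is correct and complete: the trivial case is immediate, and the shift trick $\chi(h_0)\Sigma=\Sigma$ with $\chi(h_0)\neq 1$ forces $\Sigma=0$, which is exactly the classical orthogonality proof. The paper does not prove this lemma itself but cites it as a standard fact from the character-sum literature (Lidl--Niederreiter, Chapter~5), and your proof is precisely the standard argument that reference supplies, so there is nothing to add.
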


Let $e_p(x) = \exp(2\pi i x/p)$, $\mathrm{Tr}(x) = x + x^p +\dots+x^{p^{m-1}}$ (recalling $q=p^m$) and $\psi(x) = e_p(\mathrm{Tr}(x))$. Then the functions $\{\psi(\lambda x):\lambda\in \F_q\}$ determine all of the characters of $\F_q$. 
\begin{lemma}
\label{calim:ntchar}
Let $\mathcal{B}$ denote a non-degenerate, symmetric bilinear form over $\F_q^n$, where $q$ is odd and $n\geq 2$. Let $V$ denote a subspace of $\F_q^n$. Suppose that $\bm{s}\not\in V^{\perp}$. Then $\psi(\mathcal{B}(\bm{s}, -))$ is a nontrivial character of $V$.
\end{lemma}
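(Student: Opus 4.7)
The plan is to verify in two steps that $\chi_{\bm{s}}(\bm{v}) := \psi(\mathcal{B}(\bm{s}, \bm{v}))$ is a character of the additive group $(V, +)$, and then to exhibit an element of $V$ on which $\chi_{\bm{s}}$ is non-trivial. The first step is formal: the map $\bm{v} \mapsto \mathcal{B}(\bm{s}, \bm{v})$ is an $\F_q$-linear (hence additive) map $V \to \F_q$, and $\psi$ is an additive character of $\F_q$, so the composition is a character of $V$.

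For non-triviality, since $\bm{s} \notin V^{\perp}$ there exists $\bm{v}_0 \in V$ with $c := \mathcal{B}(\bm{s}, \bm{v}_0) \neq 0$. Because $\bm{v}_0 \in V$ and $V$ is an $\F_q$-subspace, the line $\{\lambda \bm{v}_0 : \lambda \in \F_q\}$ lies inside $V$, and bilinearity gives $\mathcal{B}(\bm{s}, \lambda \bm{v}_0) = \lambda c$. As $\lambda$ runs over $\F_q$ and $c \neq 0$, the values $\lambda c$ exhaust $\F_q$. Consequently, to produce an element of $V$ on which $\chi_{\bm{s}}$ is non-trivial it suffices to show that $\psi$ itself is a non-trivial character of $\F_q$.

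To see this last point, recall that $\mathrm{Tr}: \F_q \to \F_p$ is a non-zero $\F_p$-linear map (it is a polynomial of degree $p^{m-1} < p^m = q$, so it cannot vanish identically), hence it is surjective onto $\F_p$. Pick $x_0 \in \F_q$ with $\mathrm{Tr}(x_0) = 1$; then $\psi(x_0) = e_p(1) = \exp(2\pi i / p) \neq 1$, so $\psi$ is non-trivial. Combining with the preceding paragraph, we obtain $\lambda \in \F_q$ with $\psi(\lambda c) \neq 1$, and then $\lambda \bm{v}_0 \in V$ witnesses $\chi_{\bm{s}}(\lambda \bm{v}_0) \neq 1$.

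There is no real obstacle here; the proof is essentially bookkeeping, and the only content is the reduction of non-triviality on $V$ to non-triviality of $\psi$ on $\F_q$, which in turn reduces to the surjectivity of the trace. Note that non-degeneracy of $\mathcal{B}$ is not actually used in the argument: all we need is that $\bm{s}$ fails to annihilate $V$ under $\mathcal{B}$, which is exactly the hypothesis $\bm{s} \notin V^{\perp}$.
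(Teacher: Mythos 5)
Your proof is correct, and it is exactly the standard argument the paper leaves implicit (the lemma is stated without proof, with a reference to Lidl--Niederreiter): the map $\bm{v}\mapsto\psi(\B(\bm{s},\bm{v}))$ is an additive character of $V$, and non-triviality follows from $\bm{s}\notin V^{\perp}$ together with surjectivity of the trace, so that $\psi(\lambda\,\B(\bm{s},\bm{v}_0))\neq 1$ for a suitable $\lambda\in\F_q$. Your observation that non-degeneracy of $\B$ is not needed is also accurate.
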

The next result is a slight extension of Vinogradov's bound on bilinear character sums, which appears, for example, in \cite[p. 92]{Vino}. Also see \cite[Lemma~5]{Shp} for the special case, where $\B$ is the dot product.
\begin{lemma}
\label{claim:VinBiSum}
Given $X, Y\subset \F_q^n$, we have
\begin{equation*}
    \bigg|\sum_{\bm{x}\in X}\sum_{\bm{y}\in Y} \psi\Big( \mathcal{B}(\bm{x}, \bm{y})\Big)\bigg| \leq \sqrt{|X||Y|q^n}.
\end{equation*}
\end{lemma}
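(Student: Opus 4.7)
The plan is to run the classical Cauchy--Schwarz plus orthogonality argument (Vinogradov's trick), using the non-degeneracy of $\B$ together with Lemma~\ref{claim:orthsg} and Lemma~\ref{calim:ntchar}. Let $\Sigma$ denote the double sum whose modulus we wish to bound.

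First I would apply the Cauchy--Schwarz inequality to the outer sum, writing
\[
|\Sigma|^2 = \bigg| \sum_{\bm{x} \in X} \sum_{\bm{y} \in Y} \psi(\B(\bm{x},\bm{y})) \bigg|^2 \leq |X| \sum_{\bm{x} \in X} \bigg| \sum_{\bm{y} \in Y} \psi(\B(\bm{x},\bm{y})) \bigg|^2,
\]
and then enlarge the domain of the outer variable to all of $\F_q^n$, since the summand on the right is non-negative. This yields
\[
|\Sigma|^2 \leq |X| \sum_{\bm{x} \in \F_q^n} \sum_{\bm{y}_1, \bm{y}_2 \in Y} \psi\bigl(\B(\bm{x},\bm{y}_1) - \B(\bm{x}, \bm{y}_2)\bigr).
\]

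Next I would swap the order of summation and use bilinearity to rewrite the exponent as $\B(\bm{x}, \bm{y}_1 - \bm{y}_2)$, obtaining
\[
|\Sigma|^2 \leq |X| \sum_{\bm{y}_1, \bm{y}_2 \in Y} \sum_{\bm{x} \in \F_q^n} \psi\bigl(\B(\bm{x}, \bm{y}_1 - \bm{y}_2)\bigr).
\]
For each fixed pair $(\bm{y}_1, \bm{y}_2)$, the inner sum is a character sum of $\bm{x} \mapsto \psi(\B(\bm{x}, \bm{y}_1 - \bm{y}_2))$ over the additive group $\F_q^n$. By Lemma~\ref{calim:ntchar} (applied with $V = \F_q^n$), this character is nontrivial whenever $\bm{y}_1 - \bm{y}_2 \notin (\F_q^n)^\perp = \{\bm{0}\}$, using that $\B$ is non-degenerate. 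Thus, by Lemma~\ref{claim:orthsg}, the inner sum equals $q^n$ when $\bm{y}_1 = \bm{y}_2$ and vanishes otherwise.

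Only the diagonal pairs $\bm{y}_1 = \bm{y}_2 \in Y$ contribute, giving $|Y|$ terms each equal to $q^n$. Hence
\[
|\Sigma|^2 \leq |X| \cdot |Y| \cdot q^n,
\]
and taking square roots yields the claimed bound. The only subtlety is the appeal to non-degeneracy of $\B$ to ensure $\bm{y}_1 - \bm{y}_2 \ne \bm{0}$ produces a nontrivial character; this is immediate from the standing hypotheses in the paper, and otherwise the argument is entirely routine.
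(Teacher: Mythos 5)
Your proposal is correct and follows essentially the same route as the paper: Cauchy--Schwarz on the outer sum, extending the $\bm{x}$-summation to all of $\F_q^n$, expanding the square via bilinearity, and then killing the off-diagonal terms with Lemmas~\ref{claim:orthsg} and~\ref{calim:ntchar} using ${\F_q^n}^{\perp}=\{\bm{0}\}$. No issues.
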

\begin{proof}
We apply the triangle inequality and then the Cauchy-Schwarz inequality to get
 \begin{align*}
    \bigg|\sum_{\bm{x}\in X}\sum_{\bm{y}\in Y} \psi\Big(  \mathcal{B}(\bm{x}, \bm{y})\Big)\bigg|^2 &\leq |X| \sum_{\bm{x}\in X}\bigg|\sum_{\bm{y}\in Y} \psi\Big( \mathcal{B}(\bm{x}, \bm{y})\Big)\bigg|^2\\ 
    &\leq |X| \sum_{\bm{x}\in \F_q^n}\bigg|\sum_{\bm{y}\in Y} \psi\Big( \mathcal{B}(\bm{x}, \bm{y})\Big)\bigg|^2\\ 
    &= |X| \sum_{\bm{x}\in \F_q^n}\sum_{\bm{y}, \bm{z}\in Y} \psi\Big( \mathcal{B}(\bm{x}, \bm{y}-\bm{z})\Big)\\ 
    &= |X| \sum_{\bm{y}, \bm{z}\in Y}\sum_{\bm{x}\in \F_q^n} \psi\Big( \mathcal{B}(\bm{x}, \bm{y}-\bm{z})\Big)\\ 
    &=|X||Y|q^n.
\end{align*}
To obtain the last equality, we used the fact that the inner sum in the penultimate line equals $q^n$ if $\bm{y} = \bm{z}$ and zero otherwise. This, in turn, follows from the observation that ${\F_q^n}^{\perp} = \{\bm{0}\}$, combined with Lemmas~\ref{claim:orthsg} and \ref{calim:ntchar}.
\end{proof}

\begin{proof}[Proof of Theorem~\ref{thm:32OS odd} for even $n$ and $\varepsilon(\B)=1$]

Firstly, replace $S$ by $S\sqcup \{\bm{0}\}$. This makes calculations easier. We will take away $\bm{0}$ at the end of the proof. For $\bm{s}\in S$, write $S^{'}_{\bm{s}} = \{\bm{x}\in S:\mathcal{B}(\bm{x}, \bm{s})\not=0\}$. Also write $D = \{\bm{x}\in S: \mathcal{B}(\bm{x}, \bm{x})\not=0$\}. Recalling \eqref{eqn:Nsdef}, note that
$$
\sum_{\bm{s}\in S}|S^{'}_{\bm{s}}| = \sum_{\bm{s}\in S}|S_{\bm{s}}| + |D|.
$$
Now
 \begin{align}
 \label{eqn:1slb}
     |S|^{2}-\sum_{\bm{s}\in S}|S_{\bm{s}}| - |D| &= |S|^{2}-\sum_{\bm{s}\in S}|S^{'}_{\bm{s}}|\\ \nonumber &= \bigg|\sum_{\bm{s}_1\in S}\sum_{\bm{s}_2\in S\setminus S^{'}_{\bm{s}_1}}\psi(\mathcal{B}(\bm{s}_1, \bm{s}_2))\bigg|.
    \end{align}
    Here, we used just that for $\bm{s}_1\in S$ and $\bm{s}_2\in S\setminus S^{'}_{\bm{s}_1}$, we have $\psi(\mathcal{B}(\bm{s}_1, \bm{s}_2)) = 1$.
    By the triangle inequality, we also have
     \begin{align*}
    \bigg|\sum_{\bm{s}_1\in S}\sum_{\bm{s}_2\in S\setminus S^{'}_{\bm{s}_1}}\psi(\mathcal{B}(\bm{s}_1, \bm{s}_2))\bigg| &\leq \bigg|\sum_{\bm{s}_1\in S}\sum_{\bm{s}_2\in S}\psi(\mathcal{B}(\bm{s}_1, \bm{s}_2))\bigg|+ \bigg|\sum_{\bm{s}_1\in S}\sum_{\bm{s}_2\in S^{'}_{\bm{s}_1}}\psi(\mathcal{B}(\bm{s}_1, \bm{s}_2))\bigg| \\
    & \leq \bigg|\sum_{\bm{s}_1\in S}\sum_{\bm{s}_2\in S}\psi(\mathcal{B}(\bm{s}_1, \bm{s}_2))\bigg|+ \bigg|\sum_{\bm{s}_1\in S}\sum_{\bm{s}_2\in S_{\bm{s}_1}}\psi(\mathcal{B}(\bm{s}_1, \bm{s}_2))\bigg| +|D|.
 \end{align*}
Next, let $W = \{\bm{x}\in S: |S_{\bm{x}}| \geq 2\}$. Then, using Lemmas~\ref{claim:Struct} and \ref{claim:NsOrth}, we have
  \begin{align}
  \label{eqn:DSWNs}
      \bigg|\sum_{\bm{s}_1\in W}\sum_{\bm{s}_2\in S_{\bm{s}_1}}\psi(\mathcal{B}(\bm{s}_1, \bm{s}_2))\bigg| &\leq \bigg|\sum_{\bm{s}_1\in W}\sum_{\bm{s}_2\in (V_{\bm{s}_1}\sqcup T_{\bm{s}_1})}\psi(\mathcal{B}(\bm{s}_1, \bm{s}_2))\bigg| \\ \nonumber &+ \bigg|\sum_{\bm{s}_1\in W}\sum_{\bm{s}_2\in (V_{\bm{s}_1}\sqcup T_{\bm{s}_1})\setminus S_{\bm{s}_1}}\psi(\mathcal{B}(\bm{s}_1, \bm{s}_2))\bigg| .
  \end{align}
  To bound the first sum, on the RHS of \eqref{eqn:DSWNs}, we first apply the triangle inequality to obtain
  \begin{align*}
    \bigg|\sum_{\bm{s}_1\in W}\sum_{\bm{s}_2\in (V_{\bm{s}_1}\sqcup T_{\bm{s}_1})}\psi(\mathcal{B}(\bm{s}_1, \bm{s}_2))\bigg| &\leq \bigg|\sum_{\bm{s}_1\in W}\sum_{\bm{s}_2\in V_{\bm{s}_1}}\psi(\mathcal{B}(\bm{s}_1, \bm{s}_2))\bigg| + \bigg|\sum_{\bm{s}_1\in W}\sum_{\bm{s}_2\in T_{\bm{s}_1}}\psi(\mathcal{B}(\bm{s}_1, \bm{s}_2))\bigg|.
  \end{align*}
 Note that for $\bm{s}\in W$, it follows from the definition of $V_{\bm{s}}$ (see Lemma~\ref{claim:NsOrth}) that $\bm{s}\not\in V_{\bm{s}}^{\perp}$. Thus, by Lemma~\ref{calim:ntchar}, $\psi(\mathcal{B}(\bm{s}, -))$ constitutes a character of $\F_q^n$ which is nontrivial on the subspace $V_{\bm{s}}$. Consequently, by Lemma~\ref{claim:orthsg}, we have
 \[
 \sum_{\bm{x}\in V_{\bm{s}}}\psi(\mathcal{B}(\bm{s}, \bm{x})) = 0 \quad\implies\quad \sum_{\bm{s}_1\in W}\sum_{\bm{s}_2\in V_{\bm{s}_1}}\psi(\mathcal{B}(\bm{s}_1, \bm{s}_2)) =0.
 \]
 Then, based on this observation and applications of the triangle inequality, we obtain 
 \begin{align*}
    \bigg|\sum_{\bm{s}_1\in W}\sum_{\bm{s}_2\in (V_{\bm{s}_1}\sqcup T_{\bm{s}_1})}\psi(\mathcal{B}(\bm{s}_1, \bm{s}_2))\bigg| &\leq  \bigg|\sum_{\bm{s}_1\in W}\sum_{\bm{s}_2\in T_{\bm{s}_1}}\psi(\mathcal{B}(\bm{s}_1, \bm{s}_2))\bigg|\\
    &\leq \sum_{\bm{s}_1\in W}\bigg|\sum_{\bm{s}_2\in T_{\bm{s}_1}}\psi(\mathcal{B}(\bm{s}_1, \bm{s}_2))\bigg|\\
    &\leq \sum_{\bm{s}\in W}|T_{\bm{s}}|.
  \end{align*}
  The second sum, on the RHS of \eqref{eqn:DSWNs}, is bounded trivially by
  \begin{equation*}
      \sum_{\bm{s}\in W} |V_{\bm{s}}| + |T_{\bm{s}}| - |S_{\bm{s}}|.
  \end{equation*}
  Going back to \eqref{eqn:1slb}, we have
  \begin{align*}
      |S|^{2} \leq 2|D| + \bigg|\sum_{\bm{s}_1\in S}\sum_{\bm{s}_2\in S}\psi(\mathcal{B}(\bm{s}_1, \bm{s}_2))\bigg| + \sum_{\bm{s}\in W} |V_{\bm{s}}| + 2|T_{\bm{s}}|+ \sum_{\bm{s}\in S\setminus{W}} 2|S_{\bm{s}}|.
  \end{align*}
  By Lemma~\ref{claim:VinBiSum}, we know
  $$
  \bigg|\sum_{\bm{s}_1\in S}\sum_{\bm{s}_2\in S}\psi(\mathcal{B}(\bm{s}_1, \bm{s}_2))\bigg| \leq |S|q^{n/2}.
  $$
  For $\bm{s}\in W$, write $k_{\bm{s}} = \dim(V_{\bm{s}})$. Then by Lemma~\ref{claim:Struct}, we know $|V_{\bm{s}}| + 2|T_{\bm{s}}| \leq q^{k_{\bm{s}}}+2n-4k_{\bm{s}} \leq q^{n/2}.$
  Furthermore, for $\bm{s}\in S\setminus{W}$, we have $2|S_{\bm{s}}|\leq q^{n/2}$.
  So adding it all up,
  $$
  |S|^{2} \leq 2|S|q^{n/2} + 2|D|.
  $$
  At this stage we go back to the original $S$ that does not include $\bm{0}$. The above becomes
  \begin{equation}
     \label{eqn:SpUBID}
   |S| \leq 2q^{n/2} +\Big\lfloor\frac{2|D|}{|S|}\Big\rfloor-1.
  \end{equation}

Note that from \eqref{eqn:SpUBID}, one can only deduce the bound $|S|\leq 2q^{n/2} + 1$. However, we proceed to sharpen this bound through an analysis of the set $D$. Some aspects of the remaining arguments can certainly be simplified if we are not aiming to prove the theorem for all $q$. To deal with some small technicalities that follow, we require the bound
\begin{equation}
    \label{eqn:n=2C}
    |S|\leq 2(q-1),
\end{equation}
for $n=2$, all odd $q$ and either scenarios $\varepsilon(\B) \in \{ 1 , \gamma\}$. This bound has already been established as the base case of the induction in the proof of Theorem~\ref{thm:32OS odd}\footnote{In the proof of Theorem~\ref{thm:32OS odd}, when applying Lemma~\ref{lem: AM}, to avoid a lengthy multi-case analysis, it is assumed that $q>3$. However one may easily confirm that, for our purposes here, the argument remains valid when $q=3$.}. In particular, henceforth assume $n\geq 4$. 

First we establish
\begin{equation}
\label{eqn:impb2}
    |S|\leq 2q^{n/2}-1,
\end{equation}
which follows from \eqref{eqn:SpUBID} if $2|D|< |S|$. So suppose otherwise. Using the bound on $|D|$ provided by Lemma~\ref{lem: D}, we get $|S|\leq 16$ for $n=4$ and $|S|\leq n(n+1)-2$ with both being better than \eqref{eqn:impb2} for all $q\geq 3$ and $n\geq 4$.

It remains to show \eqref{eqn:impb2} may be lowered by one. To this end, we consider a few cases showing that the assumption $|S| = 2q^{n/2}-1$ leads to contradictions on either the size or the parity of $|S|.$ In particular, the following observations will be useful.

\begin{claim}
\label{claim:SmDp}
Let $S\subset \F_q^n$ denote a maximal $(3, 2)$-orthogonal set. Then $|S\setminus D| \equiv 0 \pmod{q-1}$.
\end{claim}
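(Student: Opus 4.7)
The plan is to show that $S \setminus D$ is a disjoint union of $\F_q^*$-orbits acting by scalar multiplication, each contributing $q-1$ elements. Concretely, for every self-orthogonal $\bm{v} \in S \setminus D$ and every $\lambda \in \F_q^*$, I will argue that $\lambda \bm{v}$ must already lie in $S$. Since $\lambda \bm{v}$ is itself self-orthogonal (because $\B(\lambda \bm{v}, \lambda \bm{v}) = \lambda^2 \B(\bm{v}, \bm{v}) = 0$), this forces $\lambda \bm{v} \in S \setminus D$. The $q-1$ nonzero scalar multiples of $\bm{v}$ are pairwise distinct and any two orbits are either equal or disjoint, so $|S \setminus D|$ is a multiple of $q-1$.

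The one thing that needs checking is the key inclusion $\lambda \bm{v} \in S$. I would argue by maximality of $S$: it suffices to show $S \cup \{\lambda \bm{v}\}$ is still $(3,2)$-orthogonal. Pick any three distinct vectors; if none of them is $\lambda \bm{v}$, we use the hypothesis on $S$. If one of them is $\lambda \bm{v}$, write the triple as $\{\lambda \bm{v}, \bm{x}, \bm{y}\}$ with $\bm{x}, \bm{y} \in S$ distinct from $\lambda \bm{v}$. In the subcase $\{\bm{v}, \bm{x}, \bm{y}\}$ are three distinct vectors of $S$, apply $(3,2)$-orthogonality to this triple and note that orthogonality with $\bm{v}$ transfers to orthogonality with $\lambda \bm{v}$ by bilinearity. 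In the remaining subcase we may assume $\bm{v} \in \{\bm{x}, \bm{y}\}$, say $\bm{x} = \bm{v}$; then $\B(\lambda \bm{v}, \bm{v}) = \lambda \B(\bm{v}, \bm{v}) = 0$ gives an orthogonal pair inside $\{\lambda \bm{v}, \bm{v}, \bm{y}\}$. (Note we must have $\lambda \neq 1$ for $\lambda \bm{v} \notin S$ to be a nontrivial case, but the argument works uniformly.)

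There is no real obstacle here beyond being careful about the case $\lambda \bm{v}$ coinciding with an existing element of $S$ (in which case there is nothing to add). The proof is essentially a one-line self-orthogonality observation combined with maximality.
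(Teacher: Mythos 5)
Your proposal is correct and follows essentially the same route as the paper: maximality forces the whole punctured line $\{\lambda\bm{v} : \lambda\in\F_q^*\}$ through each self-orthogonal $\bm{v}\in S\setminus D$ to lie in $S\setminus D$, and these lines partition $S\setminus D$ into blocks of size $q-1$. The only difference is that you spell out the verification that adjoining $\lambda\bm{v}$ preserves $(3,2)$-orthogonality, which the paper leaves implicit in the phrase ``otherwise maximality of $S$ is violated.''
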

\begin{proof}
Each $\bm{v}\in S\setminus D$ is self-orthogonal and so $S$ must contain the entire punctured line $l_{\bm{v}} = \{\lambda\bm{v}:\lambda\in \F_q^{*}\}$, otherwise maximality of $S$ is violated. The result follows noting that $l_{\bm{v}}\cap l_{\bm{w}}$ is empty if $\bm{w}\not\in l_{\bm{v}}$ and of size $q-1$ otherwise.
\end{proof}

\begin{claim}
\label{lem:Dv2}
At least one of the following statements holds.
\begin{enumerate}[(i)]
\item\label{item:lem:Dv2:i} $|S|\leq 2q^{n/2}-2$,
\item\label{item:lem:Dv2:ii} $|D| = 2$,
\item\label{item:lem:Dv2:iii} $|T_{\bm{v}}|\leq 1$ for each $\bm{v}\in S$, where $T_{\bm{v}} = D\cap S_{\bm{v}}$.
\end{enumerate}
\end{claim}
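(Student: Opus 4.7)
My plan is to prove the contrapositive: assume $|D|\neq 2$ and that some $\bm{v}_0\in S$ has $|T_{\bm{v}_0}|\geq 2$, and derive (i), i.e., $|S|\leq 2q^{n/2}-2$. Picking distinct $\bm{x}_1,\bm{x}_2\in T_{\bm{v}_0}\subseteq D$, Lemma~\ref{claim:NsOrth} implies that $S_{\bm{v}_0}$ is an orthogonal set, so $\B(\bm{x}_1,\bm{x}_2)=0$; moreover $\bm{x}_1,\bm{x}_2$ are non-self-orthogonal, forcing $|D|\geq 3$. Since $\B(\bm{x}_1,\bm{x}_1)\neq 0$, we have $\F_q^n=\langle\bm{x}_1\rangle\oplus\bm{x}_1^\perp$ and the restriction $\B\rst{\bm{x}_1^\perp}$ is a non-degenerate symmetric bilinear form on the $(n-1)$-dimensional hyperplane $\bm{x}_1^\perp$. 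Thus $S=\{\bm{x}_1\}\sqcup S_{\bm{x}_1}\sqcup(S\cap\bm{x}_1^\perp)$ is a disjoint decomposition.

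I would then bound each summand. The set $S_{\bm{x}_1}$ is orthogonal (by Lemma~\ref{claim:NsOrth}, and trivially if $|S_{\bm{x}_1}|\leq 1$), so Lemma~\ref{lem:OSUB} yields $|S_{\bm{x}_1}|\leq \S_{2,2}(q,n,\B)=q^{n/2}-1$. Since $S\cap\bm{x}_1^\perp$ is a $(3,2)$-orthogonal subset of $\bm{x}_1^\perp\cong\F_q^{n-1}$ with respect to the non-degenerate $\B\rst{\bm{x}_1^\perp}$, Proposition~\ref{prop:AM32UB} gives $|S\cap\bm{x}_1^\perp|\leq 3q^{\lfloor(n-1)/2\rfloor}=3q^{n/2-1}$. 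Combining these,
\[
|S|\leq 1+(q^{n/2}-1)+3q^{n/2-1}=q^{n/2}+3q^{n/2-1},
\]
which is at most $2q^{n/2}-2$ provided $q^{n/2-1}(q-3)\geq 2$; this holds for all $q\geq 5$ and $n\geq 4$. The case $n=2$ is already covered by the base bound \eqref{eqn:n=2C}.

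The main obstacle is the case $q=3$, where $q^{n/2}+3q^{n/2-1}=2q^{n/2}$ and so the direct estimate is too weak by one. Here I would either iterate the split, exploiting that $\bm{x}_2\in\bm{x}_1^\perp$ is itself non-self-orthogonal and hence allows a further reduction inside $\bm{x}_1^\perp$, or invoke Claim~\ref{claim:SmDp}: it forces $|D|\equiv 2q^{n/2}-1\equiv 1\pmod{q-1}$ whenever $|S|=2q^{n/2}-1$, which, combined with the polynomial bound $|D|\leq O(n^2)$ from Lemma~\ref{lem: D} and the extra structural information that $T_{\bm{v}_0}\setminus\{\bm{x}_1\}$ lies inside $\bm{x}_1^\perp$, should rule out $|S|=2q^{n/2}-1$ when $q=3$ and complete the contradiction.
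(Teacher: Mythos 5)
Your main estimate is correct, and for $q\ge 5$ it is even simpler than the paper's argument: the decomposition $S=\{\bm{x}_1\}\sqcup S_{\bm{x}_1}\sqcup(S\cap \bm{x}_1^\perp)$, the bound $|S_{\bm{x}_1}|\le \S_{2,2}(q,n,\B)=q^{n/2}-1$, and Proposition~\ref{prop:AM32OS odd} --- sorry, Proposition~\ref{prop:AM32UB} --- applied in the hyperplane give $|S|\le q^{n/2}+3q^{n/2-1}\le 2q^{n/2}-2$ as soon as $(q-3)q^{n/2-1}\ge 2$ (you do not even need $|T_{\bm{v}_0}|\ge 2$ or $|D|\ne 2$ there). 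The genuine gap is the case $q=3$, and that case is essential: Theorem~\ref{thm:32OS odd} already covers $q\ge 7$, so the whole purpose of the section containing this claim (even $n\ge 4$, $\varepsilon(\B)=1$, \emph{all} odd $q$) is precisely to reach $q=3$ and $q=5$. Neither of your suggested fixes closes it as stated. Iterating the split inside $\bm{x}_1^\perp$ along $\bm{x}_2$ only yields $|S\cap\bm{x}_1^\perp|\le 1+q^{n/2-1}+3q^{n/2-1}$ (the orthogonal set $S'_{\bm{x}_2}$ in odd dimension $n-1$ has size at most $q^{n/2-1}$, and the $(n-2)$-dimensional complement is again only controlled by Proposition~\ref{prop:AM32UB}, because when both $\bm{x}_1,\bm{x}_2$ are non-self-orthogonal the equivalence class of $\B\rst{\{\bm{x}_1,\bm{x}_2\}^\perp}$ is not controlled), so the total bound gets worse, not better. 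And Claim~\ref{claim:SmDp} (which moreover requires $S$ maximal) only tells you that $|S|=2q^{n/2}-1$ forces $|D|$ to be odd; this is perfectly consistent with $|D|\ge 3$, $|D|=O(n^2)$ and $T_{\bm{v}_0}\setminus\{\bm{x}_1\}\subset\bm{x}_1^\perp$, so no contradiction follows --- ``should rule out'' is not an argument.

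For comparison, the paper extracts the missing savings at $q=3$ by a case split on whether $\bm{v}$ (your $\bm{v}_0$) is self-orthogonal. If it is not, then $\bm{v}\in T_{\bm{w}_1}\cap T_{\bm{w}_2}$, so both orthogonal neighbourhoods contain a non-self-orthogonal vector and Lemma~\ref{claim:Struct} caps them near $q^{n/2-1}$ rather than $q^{n/2}$. If it is, the paper decomposes along a pair containing a self-orthogonal vector ($\{\bm{w}_1,\bm{v}\}$ or $\{\bm{u},\bm{v}\}$ with $\bm{u}\in R_{\bm{v}}$), so that Lemma~\ref{lem: induction}~\eqref{item:lem:induction:ii} preserves $\varepsilon$ and the already-established bound \eqref{eqn:impb2} can be reused in dimension $n-2$, replacing your $3q^{n/2-1}$ by $2q^{n/2-1}-1$; the borderline pair $(q,n)=(3,4)$ is then handled separately via \eqref{eqn:n=2C}. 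Some mechanism of this kind is needed to settle $q=3$; your proposal as written does not supply one.
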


\begin{proof}
We suppose that neither \eqref{item:lem:Dv2:ii} nor \eqref{item:lem:Dv2:iii} is true and prove \eqref{item:lem:Dv2:i}. Thus, assume there exist $\bm{v}\in S$, distinct elements $\bm{w}_1, \bm{w}_2\in D\setminus\{\bm{v}\}$, with $\B(\bm{v}, \bm{w}_1)$ and $\B(\bm{v}, \bm{w}_2)$ both non-zero, and potentially a fourth element $\bm{w}_3 \in D$, which need not be distinct from the previous ones. We consider two main cases as to whether $\bm{v}$ is self-orthogonal or not. 

First, assume $\B(\bm{v}, \bm{v}) \not= 0$, which in particular implies $\bm{v}\in T_{\bm{w}_1}\cap T_{\bm{w}_2}$. By the $(3,2)$-orthogonality of $S$, we have $\B(\bm{w}_1, \bm{w}_2) = 0$ and so firstly $\{\bm{w}_1,\bm{w}_2\}$ is linearly independent and secondly by Lemma~\ref{lem: induction}~\eqref{item:lem:induction:i}, $\mathcal{B}\rst{\{\bm{w}_1,\bm{w}_2\}^\perp}$ is non-degenerate. Write
\[
S = S_{\bm{w}_1} \cup S_{\bm{w}_2} \cup S_{\bm{w}_1\bm{w}_2}\cup \{\bm{w}_1,\bm{w}_2\}.
\]
For $(q, n) = (3, 4)$, by \eqref{eqn:n=2C}, we have
\[
|S| \leq 2(q^{n/2 -1}+1)-1 +2(q-1) + 2 =   13 < 16  = 2q^{n/2}-2.
\]
For other admissible choices of $(q, n)$, by Proposition~\ref{prop:AM32UB}, we have
\[
|S| \leq 2(q^{n/2-1}+1) -1 + 3q^{n/2-1} + 2 =5q^{n/2-1}+3 < 2q^{n/2} -2.
\]

Next, assume $\B(\bm{v}, \bm{v}) = 0$. In this case $\bm{w}_3\not\in \{\bm{v}, \bm{w}_1, \bm{w}_2\}.$ We split this case further by first assuming that $\bm{w}_3$ is orthogonal to $R_{\bm{v}}$ (using the notation of Lemma~\ref{claim:NsOrth}). This implies $R_{\bm{v}}\sqcup \{\bm{w}_1, \bm{w}_2, \bm{w}_3\}$ is an orthogonal set. Further note that $\{\bm{w}_1, \bm{v}\}$ is linearly independent and that by parts \eqref{item:lem:induction:i} and \eqref{item:lem:induction:ii} of Lemma~\ref{lem: induction}, $\mathcal{B}\rst{\{\bm{w}_1,\bm{v}\}^\perp}$ is non-degenerate and its equivalence class is preserved. Write
\[
S = S_{\bm{w}_1} \cup S_{\bm{v}} \cup S_{\bm{w}_1\bm{v}}.
\]
For $(q, n) = (3, 4)$, by Lemma~\ref{claim:Struct} and \eqref{eqn:n=2C}, we have
\[
|S| \leq (q^{n/2}-1) + (q^{n/2-2}+3) + 2q^{n/2-1}-2 = 16  = 2q^{n/2}-2.
\]
For the remaining combinations of $(q, n)$, we use the bound \eqref{eqn:impb2} to get
\[
|S|\leq (q^{n/2}-1) + (q^{n/2-2}+3) + 2q^{n/2-1}-1 \leq 2q^{n/2}-2.
\]

Finally, suppose there exists some $\bm{u}\in R_{\bm{v}}$, such that $\B(\bm{w}_3, \bm{u})\not= 0$. By definition $\B(\bm{u}, \bm{u}) = 0$ and $\B(\bm{u}, \bm{v})\not= 0$, from which we may deduce $\{\bm{u}, \bm{v}\}$ is linearly independent and that by parts \eqref{item:lem:induction:i} and \eqref{item:lem:induction:ii} of Lemma~\ref{lem: induction}, $\mathcal{B}\rst{\{\bm{u},\bm{v}\}^\perp}$ is non-degenerate and its equivalence class is preserved. Write
\[
S = S_{\bm{u}} \cup S_{\bm{v}} \cup S_{\bm{u}\bm{v}}
\]
and note that $\bm{w}_1, \bm{w}_2\in T_{\bm{v}}$ and $\bm{w}_3\in T_{\bm{u}}$. 

We use the bound \eqref{eqn:impb2} to get
\[
|S|\leq 2(q^{n/2-1}+1) + 2q^{n/2-1}-1 = 4q^{n/2-1}+1 \leq 2q^{n/2}-2
\]
for all odd $q$ and $n \geq 4$.
\end{proof}

 \begin{claim}
 \label{claim:hpub}
Suppose that $S\subset \{\bm{z}\}^{\perp}$, where $\bm{z}\in \F_q^n$ is not self-orthogonal. Then 
\[
|S| \leq 3q^{n/2 -1} \leq 2q^{n/2} -2,
\]
for all $n\geq 4$ and odd $q$.
\end{claim}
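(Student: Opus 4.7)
The plan is to restrict attention to the hyperplane $W = \{\bm{z}\}^\perp$ and invoke the weaker bound from Proposition~\ref{prop:AM32UB} applied inside this $(n-1)$-dimensional subspace. First I would observe that since $\B(\bm{z},\bm{z}) \neq 0$, the subspace $\langle \bm{z} \rangle$ is not contained in $\{\bm{z}\}^\perp$, so $W$ has dimension exactly $n-1$ and $\F_q^n = W \oplus \langle \bm{z}\rangle$.

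Next, I would verify that $\B\rst{W}$ is non-degenerate: if $\bm{v} \in W$ is orthogonal (with respect to $\B$) to every element of $W$, then since $\bm{v} \perp \bm{z}$ by definition of $W$ and $W + \langle \bm{z}\rangle = \F_q^n$, it follows that $\bm{v}$ is orthogonal to all of $\F_q^n$, forcing $\bm{v} = \bm{0}$ by non-degeneracy of $\B$ on $\F_q^n$. Thus the restriction $\B\rst{W}$ is a non-degenerate symmetric bilinear form on an $(n-1)$-dimensional space, and $S$ is a $(3,2)$-orthogonal subset of $W$ with respect to it. Applying Proposition~\ref{prop:AM32UB} with dimension $n-1$ gives
\[
|S| \leq 3 q^{\lfloor (n-1)/2 \rfloor} = 3 q^{n/2 - 1},
\]
where the last equality uses that $n$ is even.

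Finally, to obtain the second inequality $3q^{n/2-1} \leq 2q^{n/2} - 2$, I would rearrange to $(2q-3)q^{n/2-1} \geq 2$, which holds for every odd $q \geq 3$ and every $n \geq 4$ since $2q-3 \geq 3$ and $q^{n/2-1} \geq q \geq 3$. There is no real obstacle here; the only thing to be careful about is ensuring $\B\rst{W}$ is non-degenerate so that Proposition~\ref{prop:AM32UB} genuinely applies, which is precisely where the hypothesis $\B(\bm{z},\bm{z}) \neq 0$ is used.
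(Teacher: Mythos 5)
Your proposal is correct and follows essentially the same route as the paper: observe that $\B$ restricted to $\{\bm{z}\}^{\perp}$ stays non-degenerate because $\bm{z}$ is not self-orthogonal, then apply Proposition~\ref{prop:AM32UB} in dimension $n-1$ and do the elementary comparison $3q^{n/2-1}\leq 2q^{n/2}-2$. You merely spell out the non-degeneracy check and the final arithmetic, which the paper leaves implicit.
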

\begin{proof}
Since $\bm{z}$ is not self-orthogonal, the restriction of $\B$ on $\{\bm{z}\}^{\perp}$ remains non-degenerate. Now, using that $S= S\cap \{\bm{z}\}^{\perp}$, we may use Proposition~\ref{prop:AM32UB}, to obtain the required result.
\end{proof}

Writing $Q = \{\bm{v}\in S: \bm{v}\in (S\setminus\{\bm{v}\})^{\perp}\}$, note that \begin{equation*}
    S= \bigcup_{\bm{v}\in S}S_{\bm{v}} \cup Q.
\end{equation*}
Now if $\bm{z}\in D\cap Q$, then $S\setminus\{\bm{z}\}\subset \{\bm{z}\}^{\perp}$. Thus, by Claim~\ref{claim:hpub}, we have
\[
|S| \leq 1 + 3q^{n/2 -1} \leq 2q^{n/2} -2,
\]
for all $n\geq 4$ and odd $q$. In particular, we may assume
\begin{equation}
    \label{eqn:DvCoversD}
D= \bigcup_{\bm{v}\in S}T_{\bm{v}}.
\end{equation}

Suppose $|S| = 2q^{n/2}-1$. Note that, if $|D|$ is even, by Claim~\ref{claim:SmDp} we must have that $|S|$ is even leading to a contradiction. Recalling Claim~\ref{lem:Dv2}, if statement \eqref{item:lem:Dv2:ii} holds, we are done and so assume statement \eqref{item:lem:Dv2:iii} is true. Let $\bm{w}\in D$ and so, by \eqref{eqn:DvCoversD}, we have $T_{\bm{v}} = \{\bm{w}\}$ for some $\bm{v}\in S$. If $\bm{v}$ is self-orthogonal, then firstly $\{\bm{v}, \bm{w}\}$ is linearly independent and secondly by parts \eqref{item:lem:induction:i} and \eqref{item:lem:induction:ii} of Lemma~\ref{lem: induction}, $\mathcal{B}\rst{\{\bm{v},\bm{w}\}^\perp}$ is non-degenerate and its equivalence class is unchanged. We write 
\[
S = S_{\bm{v}} \cup S_{\bm{w}} \cup S_{\bm{v} \bm{w}}
\]
and use Lemma~\ref{claim:Struct} as before, being mindful of the crucial fact that $S_{\bm{v}}$ contains exactly one non-self-orthogonal element. For $(q, n) = (3, 4)$, we have, by \eqref{eqn:n=2C}
\[
|S| \leq  q^{n/2-1} + (q^{n/2}-1)+ 2q^{n/2-1}-2 = 3q^{n/2-1} + q^{n/2}-3 = 15<16 = 2q^{n/2}-2.
\]
For other combinations of $(q, n)$, by \eqref{eqn:impb2}, we have
\[
|S| \leq q^{n/2-1} + (q^{n/2}-1) + 2q^{n/2-1}-1 = 3q^{n/2-1} + q^{n/2}-2\leq 2q^{n/2}-2.
\]
Both bounds above contradict the presumed size of $S$. Then, we must have that $\bm{v}$ is not self-orthogonal. It follows that $T_{\bm{w}} = \{\bm{v}\}$, which in turn implies that elements of $D$ occur in pairs. As explained above, this contradicts the presumed parity of $|S|$, concluding the proof.

\end{proof}

\section{Proof of Theorem~\ref{thm:q2even}}
 
 The proof of Theorem~\ref{thm:q2even} is similar to the proof of Theorem~\ref{thm:32OS odd}. The differences arise from having characteristic 2 (the theory of bilinear forms is different) and not being able to assume that $q$ is large enough. We are however free to assume that $n$ is large enough. A fact special to $\F_2^n$ that we use is that every two distinct non-zero vectors are linearly independent. In particular the requirement for $\{\bm{v}, \bm{w}\}$ to be linearly independent in Lemma~\ref{lem: induction} becomes redundant.

From now on we use the notation in Lemma~\ref{claim:Struct} and Lemma~\ref{claim:NsOrth}. The following simple inequality will be useful. It is specific to $\F_2^n$, is true for all $n$, and is sharp.

\begin{lemma}
\label{lem:Rv}
For $n \geq 2$, let $S \subset \F_2^n$ be a $(3,2)$-orthogonal set with respect to a non-degenerate symmetric bilinear form $\B$. If $\bm{v} \in S$, then in the notation of Lemma~\ref{claim:NsOrth}, $|R_{\bm{v}}| \leq |V_{\bm{v}}|/2$.
 \end{lemma}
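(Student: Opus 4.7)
The plan is to exploit the fact that over $\F_2$ a non-trivial $\F_2$-linear functional on a finite-dimensional $\F_2$-vector space takes each value on exactly half of that space. I would consider the map $\phi\colon V_{\bm{v}}\to \F_2$ defined by $\phi(\bm{x}) = \B(\bm{x},\bm{v})$. Because $\B$ is bilinear, $\phi$ is a well-defined $\F_2$-linear functional on the subspace $V_{\bm{v}}$.

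The key observation is that every element of $R_{\bm{v}}$ lies in $\phi^{-1}(1)$. Indeed, by the definition of $S_{\bm{v}}$ in Lemma~\ref{claim:NsOrth}, each $\bm{x}\in R_{\bm{v}}\subset S_{\bm{v}}$ satisfies $\B(\bm{x},\bm{v})\neq 0$; since the only non-zero element of $\F_2$ is $1$, this means $\phi(\bm{x})=1$. Consequently $R_{\bm{v}}\subset \phi^{-1}(1)$.

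Now I would split into two cases. If $\phi$ is identically zero on $V_{\bm{v}}$, then $\phi^{-1}(1)=\emptyset$, forcing $R_{\bm{v}}=\emptyset$ and the bound $|R_{\bm{v}}|=0\leq |V_{\bm{v}}|/2$ is trivial. Otherwise $\phi$ is a surjective $\F_2$-linear map onto $\F_2$, so its kernel is a hyperplane of $V_{\bm{v}}$ of index exactly $2$; hence $|\phi^{-1}(1)|=|V_{\bm{v}}|/2$, and the inclusion $R_{\bm{v}}\subset \phi^{-1}(1)$ gives $|R_{\bm{v}}|\leq |V_{\bm{v}}|/2$, as required.

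There is essentially no obstacle here: the statement is a clean two-line linear-algebra fact that leverages the peculiarity of working over $\F_2$, namely that the set of vectors on which a non-zero linear functional equals $1$ is precisely the complement of a codimension-$1$ subspace, i.e.\ half of the ambient space. The sharpness of the bound (noted in the statement) is witnessed whenever $\phi$ happens to be non-trivial, for instance by Example~\ref{ex:6}.
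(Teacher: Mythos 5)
Your proof is correct and rests on the same idea as the paper's: over $\F_2$ the map $\bm{x}\mapsto\B(\bm{v},\bm{x})$ is linear and equals $1$ on all of $R_{\bm{v}}$, so $R_{\bm{v}}$ can fill at most half of $V_{\bm{v}}$. The paper packages this as the disjointness of $R_{\bm{v}}$ and $R_{\bm{v}}+R_{\bm{v}}$ inside $V_{\bm{v}}$, whereas you package it as $R_{\bm{v}}$ lying in the non-trivial coset of the kernel of this functional restricted to $V_{\bm{v}}$; these are equivalent formulations of the same two-line argument.
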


 \begin{proof}
 Note that $R_{\bm{v}}$ is disjoint from $R_{\bm{v}}+R_{\bm{v}}$. Indeed, if $\bm{x}, \bm{y} \in R_{\bm{v}}$, then $\B(\bm{v} , \bm{x}) = \B(\bm{v} , \bm{y})=1$. Therefore $\B(\bm{v} , \bm{x}+ \bm{y}) = 1+1=0$. This means that $\bm{x}+ \bm{y} \notin R_{\bm{v}}$.

Now, $V_{\bm{v}}$ is a vector space containing $R_{\bm{v}}$. Therefore $R_{\bm{v}}$ and $R_{\bm{v}} + R_{\bm{v}}$ are two disjoint sets contained in $V_{\bm{v}}$. Hence
\[
2 |R_{\bm{v}}| \leq |R_{\bm{v}}| + |R_{\bm{v}} + R_{\bm{v}}| \leq |V_{\bm{v}}|.\qedhere
\]
\end{proof}

We derive a bound on $|S_{\bm{v}}|$. 

\begin{lemma}
\label{lem:Sv}
For $n \geq 2$, let $S \subset \F_2^n$ be a $(3,2)$-orthogonal set with respect to a non-degenerate symmetric bilinear form $\B$. If $\bm{v} \in S$, then in the notation of Lemma~\ref{claim:NsOrth}:
\begin{itemize}
\item If $\B = \cdot$, then
\[
    |S_{\bm{v}}| \leq  \begin{cases}
    n,  &\mbox{if } n\leq 7;\\
    1+2^{\frac{n-1}{2}-1},  &\mbox{if } n\ \text{is odd and}\ n\geq 9;\\
    2^{\frac{n}{2}-1},  &\mbox{if } n\ \text{is even and}\ n\geq 8.
    \end{cases}
\]
\item If $\B = \Hy$ and $n$ is even, then $|S_{\bm{v}}| \leq 2^{\frac{n}{2}-1}$.
\end{itemize}
\end{lemma}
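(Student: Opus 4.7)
The plan is to combine the decomposition $S_{\bm{v}} = R_{\bm{v}} \sqcup T_{\bm{v}}$ from Lemma~\ref{claim:NsOrth} with the size bounds from Lemmas~\ref{claim:Struct} and~\ref{lem:Rv}, and then optimise the resulting expression in $d := \dim V_{\bm{v}}$. For $d \geq 1$, Lemma~\ref{lem:Rv} gives $|R_{\bm{v}}| \leq 2^{d-1}$; for $d = 0$, $V_{\bm{v}} = \{\bm{0}\}$ and hence $R_{\bm{v}} \subseteq S$ is empty. In either case Lemma~\ref{claim:Struct} delivers $|T_{\bm{v}}| \leq n - 2d$. Finally, since $V_{\bm{v}}$ consists only of self-orthogonal vectors that are pairwise orthogonal, $V_{\bm{v}} \subseteq V_{\bm{v}}^{\perp}$, and non-degeneracy of $\B$ forces $d \leq \lfloor n/2 \rfloor$.

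For the hyperbolic form the bound is almost immediate: in characteristic two, $\Hy(\bm{x}, \bm{x}) = 2 \sum_k x_{2k-1} x_{2k} = 0$ for every $\bm{x} \in \F_2^n$, so $T_{\bm{v}} = \emptyset$ and the whole of $S_{\bm{v}}$ sits inside $R_{\bm{v}}$. Hence $|S_{\bm{v}}| = |R_{\bm{v}}| \leq 2^{d-1} \leq 2^{n/2 - 1}$.

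For the dot product, set $f(d) = 2^{d-1} + n - 2d$ for $d \geq 1$ and $f(0) = n$; the plan is then to maximise $f$ on $\{0, 1, \ldots, \lfloor n/2 \rfloor\}$. The differences $f(d+1) - f(d) = 2^{d-1} - 2$ (for $d \geq 1$) together with $f(1) - f(0) = -1$ show that $f$ is strictly decreasing on $\{0, 1, 2\}$, constant from $d = 2$ to $d = 3$, and strictly increasing for $d \geq 3$, so the maximum is attained at one of the two endpoints and equals $\max\{n, f(\lfloor n/2 \rfloor)\}$. Evaluating $f$ at $\lfloor n/2 \rfloor$ produces $2^{n/2-1}$ for even $n$ and $1 + 2^{(n-3)/2}$ for odd $n$; comparing each with $n$ yields the three announced regimes: $n$ dominates exactly when $n \leq 7$, $2^{n/2-1}$ takes over for even $n \geq 8$, and $1 + 2^{(n-3)/2}$ for odd $n \geq 9$ (with equality at the crossover values $n = 8, 9$).

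No single step is genuinely difficult; the principal care is to handle the $d = 0$ boundary cleanly (otherwise a spurious $2^{-1}$ term would appear in $|R_{\bm{v}}|$) and to exploit the characteristic-two identity $\Hy(\bm{x}, \bm{x}) = 0$ to collapse the hyperbolic case. The rest is the monotonicity analysis of $f$ and a short arithmetic comparison to identify the threshold dimensions $n = 8$ and $n = 9$.
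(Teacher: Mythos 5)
Your proposal is correct and follows essentially the same route as the paper: decompose $S_{\bm{v}} = R_{\bm{v}} \sqcup T_{\bm{v}}$ via Lemma~\ref{claim:NsOrth}, bound $|R_{\bm{v}}| \leq |V_{\bm{v}}|/2$ by Lemma~\ref{lem:Rv} and use the constraint $2\dim(V_{\bm{v}}) + |T_{\bm{v}}| \leq n$ from Lemma~\ref{claim:Struct}, then optimise; the only (cosmetic) difference is that you optimise over $d = \dim(V_{\bm{v}})$ whereas the paper optimises over $t = |T_{\bm{v}}|$, and both yield the same extremal values and thresholds $n=8,9$.
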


 \begin{proof}
We begin with $\B = \cdot$. By Lemma~\ref{claim:NsOrth} and Lemma~\ref{lem:Rv}, we have
\[
|S_{\bm{v}}| \leq |R_{\bm{v}}| + |T_{\bm{v}}| \leq \frac{ |V_{\bm{v}}|}{2} + |T_{\bm{v}}|.
\]
By Lemma~\ref{claim:Struct} we have $\dim(V_{\bm{v}}) \leq \lfloor(n - |T_{\bm{v}}|)/2 \rfloor$. Setting $t = |T_{\bm{v}}|$ we get
\[
|S_{\bm{v}}| \leq 2^{\lfloor\frac{n-t}{2} \rfloor-1} +t.
\]
A routine calculation confirms that, for  $n\leq 7$, the right side is maximum when $t=n$. Otherwise, the maximum is achieved when $t=1$ for odd $n$ and when $t=0$ for even $n$.

If $\B = \Hy$, there are no non-self-orthogonal vectors and so, similarly to above, $|S_{\bm{v}}| \leq |V_{\bm{v}}| /2 \leq 2^{\frac{n}{2}-1}$.
\end{proof}

We first prove the theorem for the hyperbolic form $\Hy$.

\begin{proposition}
\label{prop:Hy}
Let $n \geq 2$ be even and $S \subset \F_2^n$ be a $(3,2)$-orthogonal set with respect to the hyperbolic form $\Hy$. Then 
\[
    |S| \leq  2^{\frac{n}{2} +1} -2.
\]
\end{proposition}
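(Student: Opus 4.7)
The plan is to induct on even $n \geq 2$. For the base $n=2$, the three non-zero vectors of $\F_2^2$ are pairwise non-orthogonal under $\Hy$ (each pair has inner product $1$), so any $(3,2)$-orthogonal set has at most two elements, matching $2^{n/2+1}-2=2$.

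For the inductive step with $n \geq 4$ even, I would first dispose of the case in which $S$ is itself an orthogonal set. Since every vector is self-orthogonal under $\Hy$ in characteristic $2$, Lemma~\ref{claim:Struct} (applied with $T=\emptyset$) places $S$ inside an orthogonal subspace of dimension at most $n/2$, giving $|S| \leq 2^{n/2}-1 < 2^{n/2+1}-2$.

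Otherwise, pick distinct $\bm{v}, \bm{w} \in S$ with $\Hy(\bm{v},\bm{w}) \neq 0$. Because $\bm{v} \neq \bm{w}$ in $\F_2^n$ they are automatically linearly independent, and both are trivially self-orthogonal, so by Lemma~\ref{lem: induction}(iii) the restriction $\Hy\rst{\{\bm{v},\bm{w}\}^\perp}$ is again a hyperbolic form, now on $\F_2^{n-2}$. Using the standard decomposition
\[
S = S_{\bm{v}} \cup S_{\bm{w}} \cup S_{\bm{v}\bm{w}}
\]
(in which $\bm{v} \in S_{\bm{w}}$ and $\bm{w} \in S_{\bm{v}}$ are already accounted for), the induction hypothesis gives $|S_{\bm{v}\bm{w}}| \leq 2^{(n-2)/2+1} - 2 = 2^{n/2} - 2$, while the hyperbolic clause of Lemma~\ref{lem:Sv} gives $|S_{\bm{v}}|, |S_{\bm{w}}| \leq 2^{n/2-1}$. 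Summing these three bounds yields
\[
|S| \leq 2^{n/2-1} + 2^{n/2-1} + (2^{n/2}-2) = 2^{n/2+1} - 2,
\]
which is precisely the desired inequality.

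The only delicate point is guaranteeing that the inductive call lands back in the hyperbolic setting rather than in the dot-product setting, and this is exactly the content of Lemma~\ref{lem: induction}(iii) for a pair of self-orthogonal, non-mutually-orthogonal vectors in characteristic $2$. Because under $\Hy$ every vector is self-orthogonal, there is no non-self-orthogonal residue of $S$ to control, so unlike in the proof of Theorem~\ref{thm:32OS odd} for odd $q$ no graph-theoretic case analysis involving Ramsey-type bounds or $5$-cycles is required; the three bounds combine exactly with no slack.
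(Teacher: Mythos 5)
Your proposal is correct and follows essentially the same route as the paper: induction on even $n$, dismissing the orthogonal-set case via Lemma~\ref{claim:Struct}, then using the decomposition $S = S_{\bm{v}} \cup S_{\bm{w}} \cup S_{\bm{vw}}$ together with Lemma~\ref{lem: induction}(iii) to keep the restriction hyperbolic and Lemma~\ref{lem:Sv} to bound $|S_{\bm{v}}|, |S_{\bm{w}}|$ by $2^{n/2-1}$. Your base case is just a slightly more explicit version of the paper's observation that $\F_2^2\setminus\{\bm{0}\}$ is not $(3,2)$-orthogonal.
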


\begin{proof}
We prove the claim by induction. For $n=2$, $\F_2^2 \setminus\{\bm{0}\}$ is not $(3,2)$-orthogonal. So the claim is true for $n=2$.

For the inductive step, we may assume there exist linearly independent $\bm{v}, \bm{w}$ such that $\bm{v} \cdot \bm{w}=1$. If not, then $S$ is an orthogonal set and by Lemma~\ref{claim:Struct}, we have the better bound $|S| \leq 2^{\frac{n}{2}} -1$. By Lemma~\ref{lem: induction}~\eqref{item:lem:induction:iii}, $\Hy \rst{\{\bm{v}, \bm{w}\}^\perp}$ is non-degenerate and is equivalent to $\Hy$ (in this lower dimensional vector space). By the induction hypothesis we have
\[
|S_{\bm{vw}}| \leq 2^{\frac{n}{2}} -2.
\]
Hence by Lemma~\ref{lem:Sv}, 
\[
|S| \leq |S_{\bm{v}}| + |S_{\bm{w}}| + |S_{\bm{vw}}| \leq 2^{\frac{n}{2}-1} + 2^{\frac{n}{2}-1} + (2^{\frac{n}{2}} -2) = 2^{\frac{n}{2}+1} -2. \qedhere 
\]
\end{proof}

From now on we mainly restrict our attention to the dot product, though we will use Proposition~\ref{prop:Hy} for even $n$ because we sometimes use Lemma~\ref{lem: induction}~\eqref{item:lem:induction:i} and the restriction of the dot product may be equivalent to $\Hy$. To prove the theorem we consider two cases separately depending on whether $S$ contains a vector that is not self-orthogonal or not. We first  prove the theorem when all vectors in $S$ are self-orthogonal. The proof is similar to that of Proposition~\ref{prop:Hy}. 

\begin{proposition}
\label{prop:q2evenI}
For $n\geq 1$, let $S \subset \F_2^n$ be a $(3,2)$-orthogonal set with respect to the dot product. If $S$ consists entirely of self-orthogonal vectors, then 
\[
    |S| \leq  \begin{cases}
     2^{\frac{n+1}{2}}-2,  &\mbox{if } n\ \text{is odd};\\
    2^{\frac{n}{2} +1} -3,  &\text{if } n\ \text{is even}.
    \end{cases}
\]
\end{proposition}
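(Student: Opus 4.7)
The plan is to prove Proposition~\ref{prop:q2evenI} by induction on $n$, mimicking the scheme of Proposition~\ref{prop:Hy}. The base cases $n=1$ and $n=2$ are handled by direct inspection: there are no nonzero self-orthogonal vectors in $\F_2^1$ (giving $|S|=0 \leq 2^1-2$), and in $\F_2^2$ the only nonzero self-orthogonal vector is $(1,1)$ (giving $|S|\leq 1 = 2^2-3$).

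For the inductive step with $n\geq 3$, first dispose of the case where $S$ is itself an orthogonal set. Since every element of $S$ is self-orthogonal we may apply Lemma~\ref{claim:Struct} with $T=\emptyset$ to conclude $S$ lies in an orthogonal subspace of dimension at most $\lfloor n/2\rfloor$, so $|S|\leq 2^{\lfloor n/2\rfloor}-1$, which is strictly smaller than either of the claimed bounds. Otherwise there exist $\bm{v},\bm{w}\in S$ with $\bm{v}\cdot\bm{w}=1$, and these are automatically linearly independent as distinct nonzero vectors in $\F_2^n$. Both vectors are self-orthogonal with $\bm{v}\cdot\bm{w}\neq 0$, so Lemma~\ref{lem: induction}~\eqref{item:lem:induction:i} guarantees that the restricted form $\cdot\rst{\{\bm{v},\bm{w}\}^\perp}$ is non-degenerate on the $(n-2)$-dimensional subspace $\{\bm{v},\bm{w}\}^\perp$. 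For odd $n$ this restriction is automatically the dot product (as this is the unique non-degenerate symmetric bilinear form on an odd-dimensional $\F_2$-space). For even $n$, Lemma~\ref{lem: induction}~\eqref{item:lem:induction:iii} guarantees the restriction is the dot product (and not $\Hy$), so in either parity the induction hypothesis applies to $S_{\bm{vw}} = S \cap \{\bm{v},\bm{w}\}^\perp$, which remains a $(3,2)$-orthogonal set of self-orthogonal vectors.

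The decomposition $S \subset S_{\bm{v}} \cup S_{\bm{w}} \cup S_{\bm{vw}}$ (noting $\bm{v}\in S_{\bm{w}}$ and $\bm{w}\in S_{\bm{v}}$) then gives $|S| \leq |S_{\bm{v}}|+|S_{\bm{w}}|+|S_{\bm{vw}}|$. Since all vectors of $S$ are self-orthogonal, in the notation of Lemma~\ref{claim:NsOrth} we have $T_{\bm{v}}=T_{\bm{w}}=\emptyset$, so Lemma~\ref{lem:Rv} and Lemma~\ref{claim:Struct} yield
\[
|S_{\bm{v}}| = |R_{\bm{v}}| \leq \tfrac{1}{2}|V_{\bm{v}}| \leq 2^{\lfloor n/2\rfloor - 1},
\]
and similarly for $|S_{\bm{w}}|$. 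Invoking the induction hypothesis on $|S_{\bm{vw}}|$ gives $|S| \leq 2\cdot 2^{\lfloor n/2\rfloor-1} + 2^{\lfloor (n-2+1)/2\rfloor}-2$ for odd $n$, simplifying to $2^{(n+1)/2}-2$, and $|S|\leq 2\cdot 2^{n/2-1}+2^{n/2}-3 = 2^{n/2+1}-3$ for even $n$.

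I do not anticipate a serious obstacle. The potentially delicate point is that the restriction of the dot product to $\{\bm{v},\bm{w}\}^\perp$ could in principle become equivalent to $\Hy$ in even dimensions, in which case the induction hypothesis (specialised to $\B=\cdot$) would fail to give the required bound; this is precisely what Lemma~\ref{lem: induction}~\eqref{item:lem:induction:iii} rules out. The base cases and the bookkeeping of exponents are routine.
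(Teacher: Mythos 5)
Your proposal is correct and follows essentially the same route as the paper's proof: induction on $n$ via the decomposition $S = S_{\bm{v}} \cup S_{\bm{w}} \cup S_{\bm{vw}}$, with Lemma~\ref{lem: induction} parts \eqref{item:lem:induction:i} and \eqref{item:lem:induction:iii} preserving non-degeneracy and the equivalence class on $\{\bm{v},\bm{w}\}^\perp$, and Lemma~\ref{lem:Rv} together with Lemma~\ref{claim:Struct} giving $|S_{\bm{v}}|, |S_{\bm{w}}| \leq 2^{\lfloor n/2\rfloor -1}$ since $T_{\bm{v}}=T_{\bm{w}}=\emptyset$. The only (harmless) deviations are cosmetic: you handle the case where $S$ is itself orthogonal via Lemma~\ref{claim:Struct} rather than Lemma~\ref{lem:Berl}, and you rederive the bound of Lemma~\ref{lem:Sv} in the case $T_{\bm{v}}=\emptyset$ directly rather than citing it.
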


\begin{proof}
We prove the claim by induction. For $n=1$, $|S|=0$. For $n=2$, we have $S \subset \{(1,1)\}$ and the claim follows. 

For the inductive step, we may assume there exist linearly independent $\bm{v}, \bm{w}$ such that $\bm{v} \cdot \bm{w}=1$. If not, then $S$ is an orthogonal set and by Lemma~\ref{lem:Berl}, we have a better bound on $|S|$ than required. Furthermore, by Lemma~\ref{lem: induction}~\eqref{item:lem:induction:iii}, the dot product restricted to $\{\bm{v}, \bm{w}\}^\perp$ is equivalent to the (lower dimensional) dot product and $S_{\bm{vw}}$ contains only self-orthogonal vectors. 
For even $n$, by the induction hypothesis we have
\[
|S_{\bm{vw}}| \leq 2^{\frac{n}{2}} -3.
\]
All vectors in $S$ are self-orthogonal and so $T_{\bm{v}}=T_{\bm{w}} = \emptyset$. Therefore $S_{\bm{v}}=R_{\bm{v}}$. Lemma~\ref{lem:Rv} gives
\[
|R_{\bm{v}}| \leq 2^{\frac{n}{2}-1}.
\]
The same holds for $\bm{w}$. Putting everything together gives
\[
|S| \leq 2^{\frac{n}{2}-1} + 2^{\frac{n}{2}-1} + (2^{\frac{n}{2}} -3) = 2^{\frac{n}{2}+1} -3. 
\]

For odd $n$, the induction hypothesis gives
\[
|S_{\bm{vw}}| \leq 2^{\frac{n-1}{2}}-2.
\]
Again $T_{\bm{v}}=T_{\bm{w}} = \emptyset$ and so by Lemma~\ref{lem:Sv}, we have
\[
|S_{\bm{v}}|, |S_{\bm{w}}| \leq 2^{\frac{n-1}{2} -1}.
\]
This gives
\[
|S| \leq 2^{\frac{n-1}{2} -1} + 2^{\frac{n-1}{2} -1}+ (2^{\frac{n-1}{2}}-2) = 2^{\frac{n+1}{2}}-2,
\]
as required.
\end{proof}

The next step is to prove a bound for all $S$ that is weaker than that in Theorem~\ref{thm:q2even}. It will be used to prove the theorem when $S$ contains a vector that is not self-orthogonal.

\begin{lemma}
\label{lem:weak}
For $n \geq 1$, let $S \subset \F_2^n$ be a $(3,2)$-orthogonal set with respect to a non-degenerate symmetric bilinear form. Then  
\[
    |S| \leq  \begin{cases}
    2^{\frac{n+1}{2}} + 2n -2,  &\mbox{if } n=1,3;\\
    2^{\frac{n+1}{2}} + \tfrac{n(n+1)}{2}-3,  &\mbox{if } n\geq 5 \ \text{is odd};\\
    2^{\frac{n}{2}+1} +2n-3,  &\mbox{if } n = 2,4;\\
    2^{\frac{n}{2}+1} + \tfrac{n(n+1)}{2} -4,  &\mbox{if } n\geq 6\ \text{is even}.
    \end{cases}
\]
 \end{lemma}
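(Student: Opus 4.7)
The plan is to decompose $S$ as the disjoint union $S = (S\setminus D) \sqcup D$, where $D = \{\bm{x}\in S : \B(\bm{x},\bm{x}) \neq 0\}$, and bound the two pieces separately using results already in the paper.

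First I would dispose of the hyperbolic case. If $\B = \Hy$ (which forces $n$ to be even), then in characteristic $2$ the diagonal of each hyperbolic $2\times 2$ block vanishes, so $\B(\bm{x},\bm{x}) = 0$ for every $\bm{x} \in \F_2^n$. Thus $D = \emptyset$ and $|S| = |S\setminus D|$, and Proposition~\ref{prop:Hy} yields $|S|\leq 2^{n/2+1}-2$. A direct numerical comparison shows $2^{n/2+1} - 2 \leq 2^{n/2+1} + 2n - 3$ for $n\in\{2,4\}$ and $2^{n/2+1}-2 \leq 2^{n/2+1} + n(n+1)/2 - 4$ for $n\geq 6$, as required.

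Otherwise $\B$ is the dot product (which is automatic when $n$ is odd, by the paragraph recording Kneser's classification). The set $S\setminus D$ is a subset of a $(3,2)$-orthogonal set and hence is itself $(3,2)$-orthogonal; by construction its elements are all self-orthogonal. Therefore Proposition~\ref{prop:q2evenI} applies to give
\[
|S\setminus D| \leq
\begin{cases}
2^{(n+1)/2} - 2, & n \text{ odd},\\
2^{n/2+1} - 3, & n \text{ even}.
\end{cases}
\]
Similarly, $D$ is a $(3,2)$-orthogonal set consisting entirely of vectors that are not self-orthogonal, so Lemma~\ref{lem: D} gives $|D| \leq 2n$ when $n\leq 4$ and $|D| \leq n(n+1)/2 - 1$ when $n\geq 5$.

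Adding the two bounds in each of the four cases $(n=1,3)$, ($n\geq 5$ odd), $(n=2,4)$, ($n\geq 6$ even) matches the four expressions in the statement on the nose. No genuine obstacle is anticipated: the lemma is purely a packaging of Proposition~\ref{prop:Hy}, Proposition~\ref{prop:q2evenI}, and Lemma~\ref{lem: D}, with the only minor point being the observation that in characteristic $2$ the hyperbolic form is identically zero on the diagonal, which allows the $\Hy$ case to be handled by the self-orthogonal bound alone.
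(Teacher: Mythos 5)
Your proposal is correct and is essentially identical to the paper's proof: split off the hyperbolic case via Proposition~\ref{prop:Hy}, and in the dot-product case write $S = (S\setminus D)\sqcup D$ and apply Proposition~\ref{prop:q2evenI} to $S\setminus D$ and Lemma~\ref{lem: D} to $D$. The only difference is your (harmless but unnecessary) observation that $D=\emptyset$ for $\Hy$, since Proposition~\ref{prop:Hy} applies without any self-orthogonality hypothesis.
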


\begin{proof}
If the bilinear form is equivalent to $\Hy$, the result follows from Proposition~\ref{prop:Hy}.

If the bilinear form is equivalent to the dot product, we let $D \subset S$ be the collection of vectors in $S$ that are not self-orthogonal. The claim follows by applying Proposition~\ref{prop:q2evenI} to $S \setminus D$ and Lemma~\ref{lem: D} to $D$.
\end{proof}

We continue with the case when there is a vector that is non-self-orthogonal. The proof is longer because we cannot initiate the induction (for example, Remark~\ref{rem:2} on p.~\pageref{rem:2} shows $\S_{3,2}(2,4,\cdot) \geq 7 > 2^3 -3$) and because we can no longer assume, say, $T_{\bm{v}} = \emptyset$.

\begin{proposition}
\label{prop:q2evenII}
Let $n$ be an integer and $S \subset \F_2^n$ be a $(3,2)$-orthogonal with respect to the dot product. If $S$ contains a vector that is not self-orthogonal, then 
\[
    |S| \leq  \begin{cases}
    2^{\frac{n+1}{2}} +1,  &\mbox{if } n\geq 21\ \text{is odd};\\
    2^{\frac{n}{2}+1} -3,  &\mbox{if } n\geq 18\ \text{is even}.
    \end{cases}
\]
\end{proposition}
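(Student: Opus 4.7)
The plan is to induct on $n$, in the spirit of Propositions~\ref{prop:Hy} and~\ref{prop:q2evenI}. Having fixed a non-self-orthogonal $\bm{v}\in S$, the main step is to find a self-orthogonal $\bm{w}\in S_{\bm{v}}$ (so $\bm{v}\cdot\bm{w}=1$ and $\bm{w}\cdot\bm{w}=0$) and decompose $S = S_{\bm{v}}\sqcup S_{\bm{w}}\sqcup S_{\bm{vw}}$. With $a=\bm{v}\cdot\bm{v}=1$, $b=1$, $c=0$ one has $b^2=1\neq 0=ac$, so Lemma~\ref{lem: induction}~\eqref{item:lem:induction:i} delivers non-degeneracy of $\B\rst{\{\bm{v},\bm{w}\}^\perp}$. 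For odd $n$ the restriction is automatically the dot product since $\Hy$ lives only in even dimension; for even $n$ the restriction is either the dot product or $\Hy$, and in each subcase one applies the appropriate induction (Proposition~\ref{prop:q2evenII} or Proposition~\ref{prop:Hy}) to bound $|S_{\bm{vw}}|$. Lemma~\ref{lem:Sv} bounds $|S_{\bm{v}}|$ and $|S_{\bm{w}}|$, and the inclusion $\bm{v}\in T_{\bm{w}}$ forces $|T_{\bm{w}}|\geq 1$, tightening the $\bm{w}$-side estimate.

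For even $n\geq 18$ the naive combination already suffices: the worst case (restriction equivalent to $\Hy$) yields $|S|\leq 2^{n/2-1}+(2^{n/2-2}+2)+(2^{n/2}-2)=7\cdot 2^{n/2-2}<2^{n/2+1}-3$ once $n\geq 8$, so the induction closes cleanly. For odd $n\geq 21$ the analogous bound evaluates to $|S|\leq 2(2^{(n-3)/2}+1)+(2^{(n-1)/2}+1)=2^{(n+1)/2}+3$, exceeding the target by $2$. Closing this gap is the main obstacle and the technical heart of the argument.

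To eliminate the extra $2$ in the odd case, I would split according to whether $S_{\bm{vw}}$ contains a non-self-orthogonal vector. If it does not, Proposition~\ref{prop:q2evenI} sharpens the inductive bound to $|S_{\bm{vw}}|\leq 2^{(n-1)/2}-2$, saving more than enough. Otherwise there is $\bm{u}\in S_{\bm{vw}}\cap D$, producing two mutually orthogonal non-self-orthogonal vectors $\bm{v},\bm{u}\in S$; the triangle-free structure of the non-orthogonality graph applied to $\{\bm{v},\bm{w},\bm{u}\}$ and their neighbourhoods, combined with refinements of the counting in Lemma~\ref{lem:Sv} (in particular using that $\bm{u}$ must be orthogonal to $V_{\bm{v}}\cup V_{\bm{w}}$), should force at least one of $|S_{\bm{v}}|$ or $|S_{\bm{w}}|$ to drop by two units below its generic maximum---this mirrors the intricate case analysis carried out in the proof of Theorem~\ref{thm:32OS odd} for odd $q$.

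Finally, a separate branch handles the scenario in which no $\bm{v}\in D$ admits a self-orthogonal non-orthogonal partner $\bm{w}$: then $D\perp(S\setminus D)$, so Proposition~\ref{prop:q2evenI} controls $|S\setminus D|$ and Lemma~\ref{lem: D} gives $|D|=O(n^2)$. The bare sum is loose, but the geometric constraint $D\subset\langle S\setminus D\rangle^\perp$ kicks in: once $|S\setminus D|$ approaches its maximum $2^{(n+1)/2}-2$, its span has dimension at least $(n+1)/2$, so $\langle S\setminus D\rangle^\perp$ has dimension at most $(n-1)/2$, forcing $|D|\leq 2^{(n-1)/2}$; balancing this dimension argument against the $O(n^2)$ bound on $|D|$ should close the residual case for $n$ sufficiently large.
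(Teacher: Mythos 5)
Your overall skeleton (decompose along a non-orthogonal pair $\bm{v},\bm{w}$ with $\bm{w}$ self-orthogonal, use Lemma~\ref{lem: induction}~\eqref{item:lem:induction:i} and Lemma~\ref{lem:Sv}) matches the paper, but the load-bearing steps have genuine gaps. First, the induction you propose cannot be initiated: to bound $|S_{\bm{vw}}|$ you invoke Proposition~\ref{prop:q2evenII} itself in dimension $n-2$, but the statement is only claimed for $n\geq 21$ odd and $n\geq 18$ even, so proving $n=18$ (resp.\ $n=21$) would require the unavailable cases $n=16$ (resp.\ $n=19$); worse, the bound is actually false in low dimensions (Remark~\ref{rem:2} gives $\S_{3,2}(2,4,\cdot)\geq 7>2^{3}-3$), so there is no way to run the induction down to honest base cases. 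This is exactly the obstruction the paper flags (``we cannot initiate the induction''), and its fix is to never use the sharp statement inductively: the complement term is bounded by the unconditional Lemma~\ref{lem:weak} (sharp bound plus an $O(n^2)$ error), and the thresholds $n\geq 18,21$ exist precisely to absorb that $O(n^2)$ loss. Your even-$n$ computation could likely be repaired this way, but as written it is circular at the base.

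Second, the two hardest branches are left as hopes that do not hold up as stated. For odd $n$, the claim that triangle-freeness ``should force $|S_{\bm{v}}|$ or $|S_{\bm{w}}|$ to drop by two'' is not an argument; the paper instead reduces to $|D|\leq 3$ (then Proposition~\ref{prop:q2evenI} finishes) or picks three vectors of $D$ and case-analyses the induced graph (zero, one, or two edges), in each case exhibiting a decomposition $S=S_{\bm{a}}\cup S_{\bm{b}}\cup S_{\bm{ab}}$ where $|T_{\bm{b}}|\geq 2$ or some $V$ fails to be of maximal dimension, again closing with Lemma~\ref{lem:weak}. Third, your residual branch ($D$ orthogonal to $S\setminus D$) does not close: in the critical regime $\dim\langle S\setminus D\rangle\geq \frac{n+1}{2}$ your dimension bound only gives $|D|\leq 2^{\frac{n-1}{2}}$, while the $O(n^2)$ bound still leaves $|S|\leq 2^{\frac{n+1}{2}}-2+O(n^2)$, exceeding the target by $O(n^2)$; no balancing of these two bounds beats $2^{\frac{n+1}{2}}+1$. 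For even $n$ the paper's saving comes from a different mechanism: $S\setminus D\subset\{\bm{z}\}^{\perp}$ with $\bm{z}\in D$, the restriction to this odd-dimensional hyperplane is non-degenerate (hence the dot product), so Proposition~\ref{prop:q2evenI} gives $|S\setminus D|\leq 2^{\frac{n}{2}}-2$, a factor-two saving that absorbs $|D|=O(n^2)$; for odd $n$ this trick gains nothing, which is why the paper's odd case is organized entirely around the structure of $D$ rather than this dichotomy.
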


\begin{proof}
We begin with familiar notation. We let $G$ be the simple graph with vertex set $S$ and edges given by not mutually orthogonal pairs of vertices, and 
\[
D = \{ \bm{v} \in S : \bm{v} \cdot \bm{v} =1 \}.
\]

\underline{Even $n$}. Let $\bm{z} \in D$. We consider two separate cases according to whether there exists an edge between $\bm{z}$ and $S \setminus D$ or not.

Suppose first that there is no edge between $\bm{z}$ and $S \setminus D$. Then $S\setminus D \subset \{\bm{z}\}^\perp$. The dot product restricted to $\{\bm{z}\}^\perp$ is non-degenerate (because $\bm{z}$ is not self-orthogonal). The dimension of $\{\bm{z}\}^\perp$ is odd and so the restriction is equivalent to the (lower dimensional) dot product. Moreover, all elements of $S\setminus D$ are self-orthogonal. Therefore by Proposition~\ref{prop:q2evenI} we get $|S \setminus D| \leq 2^{\frac{n}{2}} -2$. By Lemma~\ref{lem: D} we have $|D| \leq \tfrac{n(n+1)}{2}-1$. Hence (because $n \geq 14)$
\[
|S| \leq (2^{\frac{n}{2}} -2) + (\tfrac{n(n+1)}{2}-1) \leq 2^{\frac{n+1}{2}} -3.
\]

Next we suppose that there exists and edge $\bm{vz}$ with $\bm{v}\in S\setminus D$. We have 
\[
|S| \leq |S_{\bm{v}}| + |S_{\bm{z}}| + |S_{\bm{vz}}|.
\]
By Lemma~\ref{lem: induction}~\eqref{item:lem:induction:i}, the dot product restricted to $\{\bm{v}, \bm{z}\}^\perp$ is non-degenerate. Lemma~\ref{lem:weak} gives
\[
|S_{\bm{vz}}| \leq 2^{\frac{n}{2}} +\tfrac{(n-2)(n-1)}{2} -4.
\]
To bound $|S_{\bm{v}}|$ note $\bm{z} \in T_{\bm{v}}$. Writing $t = |T_{\bm{v}}|$, and applying Lemma~\ref{claim:NsOrth} and Lemma~\ref{lem:Rv} we get (using $n \geq 12$)
\[
|S_{\bm{v}}| \leq 2^{\lfloor \frac{n-1-t}{2} \rfloor -1}+ t \leq 2^{\frac{n}{2} -2} +1.
\]
By Lemma~\ref{lem:Sv} we get $|S_{\bm{z}}| \leq 2^{\frac{n}{2} -1}$.

Putting everything together gives (using $n \geq 18$)
\[
|S| \leq 2^{\frac{n}{2}+1} -3 - (2^{\frac{n}{2} -2} -\tfrac{(n-2)(n-1)}{2}) \leq 2^{\frac{n}{2} +1} -3.
\]
\underline{Odd $n$}.
If $D$ contains up to three elements, the required result follows from Proposition~\ref{prop:q2evenI}. Let $\bm{x}, \bm{y}, \bm{z} \in D$ denote three distinct elements and let $H$ be the graph induced on $\{\bm{x}, \bm{y}, \bm{z}\}$. The graph $H$ is not a triangle because $D$ is a subset of a $(3, 2)$-orthogonal set. We consider three cases based on the number of edges in $H$. 

Suppose $H$ is the empty graph. First, assume a pair of the sets $R_{\bm{x}}, R_{\bm{y}}, R_{\bm{z}}$ has non-empty intersection. Namely, say $\bm{v}\in R_{\bm{x}}\cap R_{\bm{y}}$. Consider the decomposition 
\begin{equation}
    \label{eqn:q2odddecomp}
S = S_{\bm{x}}\cup S_{\bm{v}}\cup S_{\bm{xv}}.
\end{equation}
Note that $\bm{x}, \bm{y}\in T_{\bm{v}}$ and that by Lemma~\ref{lem: induction}~\eqref{item:lem:induction:i}, we may apply Lemma~\ref{lem:weak} to obtain
\begin{equation}
    \label{eqn:q2odd1stUB}
|S| \leq (2^{\frac{n-1}{2}-2}+3) + (2^{\frac{n-1}{2}-1}+1) +(2^{\frac{n-1}{2}}+\tfrac{(n-2)(n-1)}{2}-3) \leq 2^{\frac{n+1}{2}}+1,
\end{equation}
for $n\geq 21$. Thus suppose $R_{\bm{x}}, R_{\bm{y}}, R_{\bm{z}}$ are pairwise disjoint. This means that $R_{\bm{z}} \cup \{\bm{x} , \bm{y}\}$ is orthogonal. Since $\bm{z}\in S_{\bm{xy}}$ and the dot product restricted to $\{\bm{x}, \bm{y}\}^\perp$ is non-degenerate (by Lemma~\ref{lem: induction}~\eqref{item:lem:induction:i}), Lemma~\ref{lem:Sv} gives 
\[
|S_{\bm{z}}|= |S_{\bm{z}}\cap S_{\bm{xy}}| \leq 1+2^{\frac{n-1}{2}-2}.
\]
Considering the decomposition 
\begin{equation}
 \label{eqn:q2odddecomp2}
S = S_{\bm{x}}\cup S_{\bm{z}}\cup S_{\bm{xz}}\cup \{\bm{x}, \bm{z}\},
\end{equation}
and using Lemma~\ref{lem: induction}~\eqref{item:lem:induction:i}, Lemma~\ref{lem:Sv} (as well as its proof), and Lemma~\ref{lem:weak}, we have
\begin{equation}
    \label{eqn:q2odd1stUB2}
|S| \leq (2^{\frac{n-1}{2}-1}+1)+ (2^{\frac{n-1}{2}-2}+3) +(2^{\frac{n-1}{2}}+\tfrac{(n-2)(n-1)}{2}-3)+2 \leq 2^{\frac{n+1}{2}}+1
\end{equation}
for $n \geq 21$. (We can do better but will refer later to \eqref{eqn:q2odd1stUB2}).

Next, suppose $H$ has exactly one edge. Without loss of generality take $\bm{yx}$ to be the edge. We split this case further. First suppose there exists an edge between $\bm{z}$ and $R_{\bm{x}} \cup R_{\bm{y}}$. Say, an edge between $\bm{z}$ and $\bm{v}\in R_{\bm{x}}$. We consider the decomposition \eqref{eqn:q2odddecomp} Then noting that $\bm{x}, \bm{z}\in T_{\bm{v}}$ and that Lemma~\ref{lem: induction}~\eqref{item:lem:induction:i} allows one to apply Lemma~\ref{lem:weak}, one recovers the same bound on $|S|$ as \eqref{eqn:q2odd1stUB}. Next, suppose there is no edge between $\bm{z}$ and $R_{\bm{x}}\cup R_{\bm{y}}$. In particular, $R_{\bm{x}} \cup \{\bm{y},\bm{z}\}$ is an orthogonal set. It follows that $V_{\bm{x}}$ does not have the maximum dimension that is possible for orthogonal subspaces. Thus, using the decomposition \eqref{eqn:q2odddecomp2}
and using Lemma~\ref{lem: induction}~\eqref{item:lem:induction:i}, Lemma~\ref{claim:Struct}, Lemma~\ref{lem:Rv} and Lemma~\ref{lem:weak}, we may obtain the same bound on $|S|$ as \eqref{eqn:q2odd1stUB2}.

Finally, suppose $H$ has two edges. Without loss of generality let $\bm{yxz}$ be the path of length 2. Here, we proceed to show that we may assume $|D|=3$, which, as pointed out earlier, gives the required result.

Suppose there exists a fourth vector $\bm{w}\in D$. If $\bm{w}$ forms an edge with $\bm{x}$, then there is no edge between any two of $\{\bm{y}, \bm{z}, \bm{w}\}$ and we are done by the arguments of the first case. If, on the other hand, $\bm{w}$ does not form an edge with $\bm{x}$, by Lemma~\ref{lem: induction}~\eqref{item:lem:induction:i}, dot product is non-degenerate on $\{\bm{x}, \bm{w}\}^{\perp}$, so we use
\[
S = S_{\bm{x}}\cup S_{\bm{w}} \cup S_{\bm{xw}} \cup\{\bm{x}, \bm{w}\}.
\]
Then, noting $|T_{\bm{x}}|\geq 2$ and arguing as before, we obtain
\begin{equation*}
|S| \leq (2^{\frac{n-1}{2}-2}+3) + (2^{\frac{n-1}{2}-1}+1) +(2^{\frac{n-1}{2}}+\tfrac{(n-2)(n-1)}{2}-3) + 2 \leq 2^{\frac{n+1}{2}}+1,
\end{equation*}
which is the same as \eqref{eqn:q2odd1stUB2}.
\end{proof}

The proof of Theorem~\ref{thm:q2even} is completed by combining Propositions~\ref{prop:Hy}, \ref{prop:q2evenI} and \ref{prop:q2evenII}.

\begin{remark}
\label{rem:2}
Theorem~\ref{thm:q2even} is false for small $n$. For $n=2$, $\S_{3,2}(2,2, \cdot) = 3$ as we see by taking $S = \F_2^2 \setminus \{\bm{0}\}$. For $n=4$ the example below shows $\S_{3,2}(2,4, \cdot) \geq 7$:
\[
S = \{ (1,1,1,0), (1,0,0,0),(1,0,1,1),(0,0,0,1),(0,1,1,1),(0,1,1,0), (1,1,0,1)\}.
\]
The graph of $S$ is indeed triangle-free: using the implicit order on the vertices, it is the union of the 6-cycle $234567$ with the edges $12$ and $47$.
\end{remark}

\section{Proof of Theorem~\ref{thm:k2OS}}
The following is essentially the same as \cite[Equation~2.4]{HIKR} and \cite[Lemma~5]{Shp}. Also see~\cite{AlKr} or apply the point-hyperplane incidence bound in~\cite{Vinh11}. 
\begin{lemma}
\label{leqm:OXYUB}
For $X, Y \subset \F_q^n$, define
$$
O(X, Y) = |\{(\bm{x}, \bm{y})\in X\times Y: \mathcal{B}(\bm{x}, \bm{y}) = 0 \}|.
$$
Then
$$
\bigg|O(X, Y) - \frac{|X||Y|}{q}\bigg| \leq \sqrt{|X||Y|q^n}.
$$
\end{lemma}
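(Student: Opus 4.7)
The plan is to use the standard character sum approach over the additive characters of $\F_q$ to count orthogonal pairs. Write $\psi$ for the canonical non-trivial additive character on $\F_q$ used in Lemma~\ref{claim:VinBiSum}. The indicator of $\B(\bm{x},\bm{y})=0$ can be expressed via Lemma~\ref{claim:orthsg} applied to the group $\F_q$: for any $a\in \F_q$,
\[
\mathbb{1}_{a=0}=\frac{1}{q}\sum_{t\in\F_q}\psi(ta).
\]
Substituting $a=\B(\bm{x},\bm{y})$ and summing over $(\bm{x},\bm{y})\in X\times Y$ gives
\[
O(X,Y)=\frac{1}{q}\sum_{t\in\F_q}\sum_{\bm{x}\in X}\sum_{\bm{y}\in Y}\psi\bigl(t\,\B(\bm{x},\bm{y})\bigr).
\]

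The $t=0$ contribution is exactly $|X||Y|/q$, which is the proposed main term. For the remaining terms, use bilinearity to write $t\,\B(\bm{x},\bm{y})=\B(\bm{x},t\bm{y})$; since $t\neq 0$, the dilate $tY=\{t\bm{y}:\bm{y}\in Y\}$ has cardinality $|Y|$. Thus for every $t\in \F_q^{*}$, Lemma~\ref{claim:VinBiSum} yields
\[
\bigg|\sum_{\bm{x}\in X}\sum_{\bm{y}\in Y}\psi\bigl(\B(\bm{x},t\bm{y})\bigr)\bigg|=\bigg|\sum_{\bm{x}\in X}\sum_{\bm{y}'\in tY}\psi\bigl(\B(\bm{x},\bm{y}')\bigr)\bigg|\leq\sqrt{|X|\,|tY|\,q^{n}}=\sqrt{|X||Y|q^{n}}.
\]

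Putting everything together with the triangle inequality over the $q-1$ nonzero values of $t$:
\[
\bigg|O(X,Y)-\frac{|X||Y|}{q}\bigg|\leq\frac{1}{q}\sum_{t\in\F_q^{*}}\sqrt{|X||Y|q^{n}}=\frac{q-1}{q}\sqrt{|X||Y|q^{n}}\leq\sqrt{|X||Y|q^{n}},
\]
which is the desired bound. There is no real obstacle here: the only step requiring care is the reduction of the $t\neq 0$ sums to the shape already handled by Lemma~\ref{claim:VinBiSum}, and this is immediate because $\B$ is bilinear and non-zero dilation is a bijection on $\F_q^{n}$. The slightly sharper constant $\tfrac{q-1}{q}$ produced by the argument is absorbed into the stated inequality.
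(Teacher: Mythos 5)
Your proof is correct. Note that the paper does not actually prove Lemma~\ref{leqm:OXYUB}: it defers to the literature (\cite[Equation~2.4]{HIKR}, \cite[Lemma~5]{Shp}, or a point--hyperplane incidence bound). Your argument is the standard additive-character computation, but organized so that it follows from the paper's own Lemma~\ref{claim:VinBiSum}: you expand the indicator of $\B(\bm{x},\bm{y})=0$ as $q^{-1}\sum_{t\in\F_q}\psi\bigl(t\,\B(\bm{x},\bm{y})\bigr)$, identify the $t=0$ term with the main term $|X||Y|/q$, and for each $t\neq 0$ absorb $t$ into the dilate $tY$ (a bijective image of $Y$) so that the Vinogradov-type bilinear bound applies verbatim; summing over the $q-1$ nonzero values of $t$ even yields the marginally sharper constant $\tfrac{q-1}{q}$. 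The cited sources instead run a single Cauchy--Schwarz over the pairs $(\bm{x},t)$, which gives the same order of bound; your term-by-term reduction is a perfectly good (and arguably cleaner, given that Lemma~\ref{claim:VinBiSum} is already in the paper) alternative. One point you should make explicit: both Lemma~\ref{claim:VinBiSum} and Lemma~\ref{leqm:OXYUB} require $\B$ to be non-degenerate (the proof of Lemma~\ref{claim:VinBiSum} uses $(\F_q^n)^{\perp}=\{\bm{0}\}$, and the statement fails for degenerate forms); this hypothesis is implicit in the paper's context, since the lemma is only applied under the assumptions of Theorem~\ref{thm:k2OS}, but your write-up should record it.
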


The following result is due to Tur\'an~\cite{Turan}.
\begin{lemma}
\label{lem:Turan}
Any graph of $n$ vertices, which is $K_{r+1}$-free contains at most $(1-1/r)(n^2/2)$ edges.
\end{lemma}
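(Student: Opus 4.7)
The plan is to prove the theorem by double induction, primary on $r$ and secondary on $n$. The base case $r=1$ is vacuous since a $K_2$-free graph has no edges and the right-hand side is zero. The base case $n\leq r$ is also immediate: a short calculation shows $\binom{n}{2}\leq (1-1/r)n^2/2$ whenever $n\leq r$, with equality at $n=r$.

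For the inductive step, fix $r\geq 2$ and $n>r$, and assume the bound holds both for all smaller values of $r$ (at every $n$) and for the current $r$ with strictly fewer than $n$ vertices. Let $G$ be a $K_{r+1}$-free graph on $n$ vertices. I split into two cases depending on whether $G$ contains a clique of size $r$. If $G$ is $K_r$-free, then the induction hypothesis applied with parameter $r-1$ in place of $r$ gives
\[
e(G) \leq \left(1-\tfrac{1}{r-1}\right)\frac{n^2}{2} \leq \left(1-\tfrac{1}{r}\right)\frac{n^2}{2},
\]
as required.

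Otherwise, fix a clique $K\subset V(G)$ of size exactly $r$. The key combinatorial observation is that every vertex $u\in V(G)\setminus K$ is adjacent to at most $r-1$ vertices of $K$, for if $u$ were adjacent to all $r$ members of $K$ then $K\cup\{u\}$ would span a $K_{r+1}$. Consequently the number of edges between $K$ and $V(G)\setminus K$ is at most $(r-1)(n-r)$. The induced subgraph $G-K$ on $n-r$ vertices is itself $K_{r+1}$-free, so the inductive hypothesis (on $n$) gives $e(G-K)\leq (1-1/r)(n-r)^2/2$. Adding the $\binom{r}{2}$ edges inside $K$ yields
\[
e(G) \leq \binom{r}{2} + (r-1)(n-r) + \left(1-\tfrac{1}{r}\right)\frac{(n-r)^2}{2} = \frac{r-1}{2r}\bigl[r^2+2r(n-r)+(n-r)^2\bigr] = \left(1-\tfrac{1}{r}\right)\frac{n^2}{2},
\]
where the final equality uses the identity $r^2+2r(n-r)+(n-r)^2=n^2$.

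There is no serious obstacle: the entire argument is routine once the two-parameter induction is in place. The only subtlety worth flagging is recognising that one must induct on $r$ as well as $n$, so that the case in which $G$ fails to contain a $K_r$ can be handled cleanly by invoking a stronger Turán-type bound at the smaller clique parameter $r-1$; without this, one would have to argue separately that an extremal $K_{r+1}$-free graph necessarily contains a $K_r$.
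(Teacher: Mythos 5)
Your proof is correct. The paper does not prove this lemma at all --- it is stated as a classical result and attributed to Tur\'an's 1941 paper, so there is no in-paper argument to compare against. Your double induction (on the clique parameter $r$ and, within that, on the number of vertices $n$) is one of the standard proofs of Tur\'an's theorem, and every step checks out: the base cases $r=1$ and $n\leq r$ are handled correctly, the degree bound of $r-1$ from a vertex outside an $r$-clique into that clique is the right key observation, and the closing algebra $\binom{r}{2}+(r-1)(n-r)+\bigl(1-\tfrac1r\bigr)\tfrac{(n-r)^2}{2}=\tfrac{r-1}{2r}\bigl(r+(n-r)\bigr)^2=\bigl(1-\tfrac1r\bigr)\tfrac{n^2}{2}$ is exact. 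Your flagged subtlety --- that the secondary induction on $r$ is what lets you dispose of the case where $G$ contains no $K_r$ --- is well taken; the alternative of showing an extremal graph must contain a $K_r$ is indeed messier.
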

\begin{proof}[Proof of Theorem~\ref{thm:k2OS}]
Let $G = G(S, E_1)$ be the simple graph, where $(\bm{s_1}, \bm{s_2}) \in S^2$ forms an edge in $E_1$ if $\bm{s_1} \neq \bm{s_2}$ and $\mathcal{B}(\bm{s_1}, \bm{s_2})\not= \bm{0}$. Then using the fact that $S$ is $(k, 2)$-orthogonal, we know that $G$ is $K_{k}$-free and thus by Lemma~\ref{lem:Turan}, 
$$
|E_1|\leq \frac{k-2}{k-1}\frac{|S|^2}{2}.
$$
Denoting $G^{'} = G(S, E_2)$ as the complement of $G$, we deduce that
$$
|E_2| \geq \frac{|S|(|S|-1)}{2} - |E_1| \geq \frac{|S|^2}{2(k-1)} -\frac{|S|}{2}.
$$
Now, clearly $O(S,S) \geq 2|E_2|$. Hence, applying Lemma~\ref{leqm:OXYUB}, we have
\begin{equation*}
\frac{|S|^2}{k-1} - \frac{|S|^2}{q}-|S| \leq |S|q^{n/2},
\end{equation*}
which gives
\[
|S| \leq \bigg(\frac{q(k-1)}{q-k+1}\bigg)(q^{n/2}+1). \qedhere
\]
\end{proof}

\section*{Acknowledgement}
The authors are grateful to anonymous referees for their suggestions, which helped improve the presentation of the paper.

\end{document}